\newcommand\da{\!\downarrow\!}
\newcommand\la{\leftarrow}
\newcommand\id{\mathrm{id}}
\newcommand\ten{\otimes}
\newcommand\vareps{\varepsilon}
\newcommand\CC{\mathrm{C}}
\newcommand\CCC{\mathrm{CC}}
\newcommand\GT{\mathrm{GT}}
\newcommand\Levi{\mathrm{Levi}}
\renewcommand\H{\mathrm{H}}
\newcommand\z{\mathrm{Z}}
\renewcommand\b{\mathrm{B}}
\newcommand\HH{\mathrm{HH}}
\newcommand\Z{\mathbb{Z}}
\newcommand\Q{\mathbb{Q}}
\newcommand\R{\mathbb{R}}
\newcommand\bG{\mathbb{G}}
\newcommand\C{\mathcal{C}}
\newcommand\cD{\mathcal{D}}
\newcommand\cP{\mathcal{P}}
\newcommand\cQ{\mathcal{Q}}
\newcommand\cT{\mathcal{T}}
\newcommand\cU{\mathcal{U}}
\newcommand\cW{\mathcal{W}}
\newcommand\cPer{\mathcal{P}\!\mathit{er}}
\renewcommand\O{\mathscr{O}}
\renewcommand\P{\mathscr{P}}
\newcommand\sA{\mathscr{A}}
\newcommand\sD{\mathscr{D}}
\newcommand\sO{\mathscr{O}}
\newcommand\sP{\mathscr{P}}
\newcommand\fX{\mathfrak{X}}
\newcommand\fY{\mathfrak{Y}}
\renewcommand\L{\Lambda}
\newcommand\g{\mathfrak{g}}
\newcommand\ft{\mathfrak{t}}
\renewcommand\hom{\mathscr{H}\!\mathit{om}}
\newcommand\cHom{\mathcal{H}\!\mathit{om}}
\newcommand\cDiff{\mathcal{D}\!\mathit{iff}}
\newcommand\Ass{\mathrm{Ass}}
\newcommand\Br{\mathrm{Br}}
\newcommand\Hom{\mathrm{Hom}}
\newcommand\map{\mathrm{map}}
\newcommand\EFC{\mathrm{EFC}}
\newcommand\HHom{\underline{\mathrm{Hom}}}
\newcommand\DDer{\underline{\mathrm{Der}}}
\newcommand\Ext{\mathrm{Ext}}
\newcommand\Aut{\mathrm{Aut}}
\newcommand\cone{\mathrm{cone}}
\newcommand\per{\mathrm{per}}
\newcommand{\brh}{\llbracket \hbar \rrbracket}
\newcommand{\brhh}{\llbracket \hbar^2 \rrbracket}
\newcommand\Co{\mathrm{Co}}
\newcommand\CoS{\mathrm{CoS}}
\newcommand\Top{\mathrm{Top}}
\newcommand\Gp{\mathrm{Gp}}
\newcommand\Spec{\mathrm{Spec}\,}
\newcommand\Set{\mathrm{Set}}
\newcommand\Pol{\mathrm{Pol}}
\newcommand\cPol{\mathcal{P}ol}
\newcommand\poly{\mathrm{poly}}
\newcommand\Com{\mathrm{Com}}
\newcommand\ad{\mathrm{ad}}
\newcommand\Lim{\varprojlim}
\newcommand\LLim{\varinjlim}
\newcommand\ho{\mathrm{ho}\!}
\newcommand\into{\hookrightarrow}
\newcommand\onto{\twoheadrightarrow}
\newcommand\abuts{\implies}
\newcommand\xra{\xrightarrow}
\newcommand\xla{\xleftarrow}
\newcommand\bt{\bullet}
\newcommand\by{\times}
\newcommand\mc{\mathrm{MC}}
\newcommand\mmc{\underline{\mathrm{MC}}}
\newcommand\Perf{\mathrm{Perf}}
\newcommand\Rees{\mathrm{Rees}}
\newcommand\Symm{\mathrm{Symm}}
\newcommand\et{\acute{\mathrm{e}}\mathrm{t}}
\newcommand\Tate{\mathrm{Tate}}
\newcommand\Tot{\mathrm{Tot}\,}
\newcommand\diag{\mathrm{diag}\,}
\newcommand\ind{\mathrm{ind}}
\newcommand\pro{\mathrm{pro}}
\newcommand\pd{\partial}
\newcommand\half{\frac{1}{2}}
\newcommand\gr{\mathrm{gr}}
\newcommand\Fil{\mathrm{Fil}}
\newcommand\Lie{\mathrm{Lie}}
\newcommand\op{\mathrm{opp}}
\newcommand\co{\colon\thinspace}
\newcommand\oR{\mathbf{R}}
\newcommand\oL{\mathbf{L}}
\newcommand\on{\mathbf{n}}
\newcommand\om{\mathbf{m}}
\newcommand\uleft\underleftarrow
\newcommand\uline\underline
\newcommand\uright\underrightarrow
\newcommand{\tps}{\texorpdfstring}
\DeclareMathOperator{\hatHHom}{\uline{\mathrm{H}\widehat{\mathrm{o}}\mathrm{m}}}
\DeclareMathOperator{\hatTot}{\mathrm{T}\widehat{\mathrm{o}}\mathrm{t}}
\DeclareMathOperator{\hatDDer}{\uline{\mathbb{D}{\widehat{\mathrm{e}}\mathrm{r}}}}
\newtheorem{theorem}{Theorem}[section]
\newtheorem{proposition}[theorem]{Proposition}
\newtheorem{corollary}[theorem]{Corollary}
\newtheorem{lemma}[theorem]{Lemma}
\newtheorem*{theorem*}{Theorem}
\newtheorem*{proposition*}{Proposition}
\newtheorem*{corollary*}{Corollary}
\newtheorem*{lemma*}{Lemma}
\newtheorem*{conjecture*}{Conjecture}
\theoremstyle{definition}
\newtheorem{definition}[theorem]{Definition}
\newtheorem*{definition*}{Definition}
\newtheorem*{notation*}{Notation}
\theoremstyle{remark}
\newtheorem{remark}[theorem]{Remark}
\newtheorem{remarks}[theorem]{Remarks}
\newtheorem*{example*}{Example}
\newtheorem*{examples*}{Examples}
\newtheorem*{remark*}{Remark}
\newtheorem*{remarks*}{Remarks}
\newtheorem*{exercise*}{Exercise}
\newtheorem*{property*}{Property}
\newtheorem*{properties*}{Properties}
\begin{document}

\begin{abstract}
We prove that every $0$-shifted Poisson structure on a derived Artin $n$-stack admits a curved $A_{\infty}$ deformation quantisation whenever the stack has perfect cotangent complex; in particular, this applies to  LCI schemes, where it gives a DQ algebroid quantisation. Whereas the Kontsevich--Tamarkin approach to quantisation for smooth varieties hinges on invariance of the Hochschild complex under affine transformations, we instead exploit the observation that  the Hochschild complex carries an anti-involution, and that such anti-involutive deformations of the complex of polyvectors are essentially unique. We also establish analogous statements for deformation quantisations in $\C^{\infty}$ and analytic settings. 
\end{abstract}

\title[Quantisation of derived Poisson  structures]{Quantisation of derived Poisson  structures}
% \\ \MakeLowercase{\it \small (intermediate revision with detailed stacky CDGA constructions)}

\author{J.P.Pridham}

% \subjclass[2020]{53D55 %(Deformation quantization), 
% 14A30 (Derived algebraic geometry)}

\maketitle

\section*{Introduction}

A deformation quantisation of a Poisson structure on a geometric object $Y$
is a non-commutative deformation, parametrised by power series in $\hbar$, of the
functions $\sO_Y$ on $Y$, such that the classical limit $\hbar \to 0$ recovers the Poisson structure.
Classically, this means looking at associative deformations $\star_{\hbar}$ of the
multiplication on $\sO_Y$, with the Poisson bracket then given by $\{a,b\}:=\lim_{\hbar \to 0} \frac{a\star_{\hbar}  b -b \star_{\hbar}a}{\hbar}$. In algebro-geometric settings, the local deformation quantisations tend not to glue strictly, leading to the notion of DQ algebroid quantisations as deformations of the algebra regarded as a  linear category on one object, so the cocycle condition only holds up to an inner automorphism satisfying further coherence relations. In derived geometry, $\sO_Y$ is homologically enriched so  quantisations have to be formulated in terms of more exotic algebraic structures.

By \cite{kontsevichPoisson}, every Poisson manifold admits a deformation quantisation.
For smooth algebraic varieties in characteristic $0$, Kontsevich and Yekutieli proved an analogous statement, showing in \cite{kontsevichDQAlgVar,yekutieliDQAG} that all Poisson structures admit DQ algebroid quantisations. Via local choices of connections, the question reduced %(as detailed  in \cite{,vdBerghGlobalDQ}) 
to constructing quantisations of  affine space. These could then be handled as in \cite{tamarkinOperadicKontsevichFormality,KontsevichOperads,yekutieliTwistedDQ,vdBerghGlobalDQ}: formality of the $E_2$ operad associates  to the Hochschild complex a deformation of the $P_2$-algebra of multiderivations, and invariance under affine transformations ensures that it is the unique deformation. The same argument  extends to  graded manifolds \cite{CattaneoFelder}, but requires the differential to be trivial.\footnote{Shortly after this manuscript first appeared,  \cite{LiaoStienonXuFormalityDGMfd} established a Kontsevich--Duflo formality quasi-isomorphism for  finite-dimensional dg $\C^{\infty}$-manifolds, indirectly induced from formality for graded manifolds. A similar argument should apply to derived affine schemes with perfect cotangent complexes.}

We now consider generalisations of this question to singular schemes and more generally to derived stacks in characteristic $0$, %(in algebraic, analytic and $\C^{\infty}$ settings), 
considering quantisations of $0$-shifted Poisson structures in the sense of \cite{poisson,CPTVV} and their analytic and $\C^{\infty}$ analogues %%since it seems to preserve my filtration, I think it'll also induce quantisations.  
\cite{DStein,DQDG}. 
% Such structures naturally arise on $1$-shifted Lagrangians, such as the derived moduli stack of perfect complexes on a del Pezzo surface. Examples of derived stacks naturally carrying such structures include moduli of local systems on interiors of compact oriented topological surfaces with boundary,%%not interesting as \cite{calaqueLagrangian} observes it's already a smooth DM stack
% and moduli of perfect complexes on a del Pezzo surface. %%%another class of examples which stands out is that moment maps $X \to \g^*$  give such structures on $[X/G]$. 
% 
For positively shifted structures, the analogous question is a formality, following from the equivalence $E_{n+1}\simeq P_{n+1}$ of operads.  Quantisations  for non-degenerate  $0$-shifted Poisson structures were established in \cite{DQnonneg}, and we now consider degenerate quantisations as well, addressing the remaining unsolved case of \cite[Conjecture 5.3]{toenICM}\footnote{Although the statement for derived DM stacks was specifically claimed in \cite[Theorem 5.4]{toenICM}, it is not even stated in the reference provided, whose proof is only relevant to strictly positive shifts,  and it lacks the constraints on the cotangent complex needed to rule out known affine counterexamples.}, long regarded as the hardest\footnote{See for instance the survey \cite[\S 3]{PantevVezzosiUtah}, but beware %that the bibliography is very incomplete, and
that deformation quantisations for   negative shifts, starting with the BV quantisations of \cite{DQvanish}, take a far more subtle form than proposed there, and that the declared  aim of \cite[\S 2]{PantevVezzosiUtah} is somewhat moot given the uncited \cite{poisson}.}.

The construction of non-degenerate quantisations in \cite{DQnonneg,DQLag} only relied on the fact that the Hochschild complex is an anti-involutive  deformation of the complex of multiderivations. Our strategy in this paper is closer to \cite{tamarkinOperadicKontsevichFormality,KontsevichOperads} in that we  establish  an equivalence between the two complexes. 
As in \cite{DQnonneg,DQLag}, the key observation is still that the Hochschild complex of a differential graded-commutative algebra (CDGA) carries an anti-involution corresponding to the endofunctor on deformations sending an algebra to its opposite. Via a  formality quasi-isomorphism for the $E_2$ operad corresponding to an even associator, %which can be deduced from the action of the Grothendieck--Teichm\"uller group (\S \ref{GTsn}), 
the Hochschild complex becomes an anti-involutive deformation of the $P_2$-algebra of multiderivations. 

We  show (Corollary \ref{fildefcor1})
that such deformations  are essentially unique whenever the complexes of polyvectors and of multiderivations are quasi-isomorphic. 
This condition is satisfied when the CDGA has perfect cotangent 
complex,  so   gives the following:
% Theorem \ref{fildefhochthm1}
% an equivalence between polyvectors and the Hochschild complex (Theorem \ref{fildefhochthm1}). 
\begin{theorem*}[\ref{fildefhochthm1}] %%%! completely rewritten
Let $A$ be a  CDGA, $\C^{\infty}$-DGA or DGA with entire functional calculus. Whenever  
$A$ has  perfect cotangent complex in the relevant theory, any choice of even $1$-associator yields a natural quasi-isomorphism
\begin{align*}
%  \CCC_{R,\oplus}(A)_{[-1]} &\simeq \bigoplus_{p \ge 0}  \oR\HHom_A(\oL\Omega^p_A ,A)_{[p-1]}\\
 D^{\poly}_{\oplus}(A)_{[-1]}&\simeq \bigoplus_{p \ge 0}  \oR\HHom_A(\oL\Omega^p_A ,A)_{[p-1]}
 \end{align*}
of  differential graded Lie algebras (DGLAs) between %the cohomological Hochschild complex or 
the relevant complex of polydifferential operators   and the algebra of derived polyvectors, compatible with canonical filtrations and involutions. %There are analogues of 
In the  CDGA case, we can replace  $D^{\poly}_{\oplus}(A)$ with the cohomological Hochschild complex $\CCC_{R,\oplus}(A)$.
\end{theorem*}

The following underived consequence is a special case of   Corollary \ref{affquantcor} (applied to a cofibrant replacement and taking $\H_0$ of the output). It was previously known only for non-singular spaces.
\begin{corollary*}
 Take the ring $A$ of functions on an affine scheme, affine $\C^{\infty}$-space, or affine analytic space, with only local complete intersection (LCI) singularities, together with a Poisson structure $\{-,-\}$ on $A$. There then  exist associative deformations $\star_{\hbar}$ of the product on $A\brh$ satisfying $\{a,b\} \equiv \frac{a\star_{\hbar}  b -b \star_{\hbar}a}{\hbar} \mod \hbar$. The coefficients of $\star_{\hbar}$ are given by differential operators, and the deformation can be chosen to be anti-involutive in the sense that $b\star_{\hbar} a = a\star_{-\hbar} b$.
\end{corollary*}

Theorem \ref{fildefhochthm1} enjoys sufficient
 functoriality to yield the following global consequence (a special case of Corollary \ref{DMquantcor}, which also considers derived objects and gives a complete parametrisation).
\begin{corollary*}
Given an algebraic, $\C^{\infty}$, or analytic space or Deligne--Mumford stack  $\fX$ with only LCI singularities, every Poisson structure on the sheaf $\sO_{\fX}$ of functions admits   DQ algebroid deformations $\sA_{\hbar}$  of $\sO_{\fX}\brh$. These can be chosen to be self-dual in the sense that  $\sA_{-\hbar}\simeq \sA_{\hbar}^{\op}$.
\end{corollary*}

% For derived Deligne--Mumford stacks with perfect cotangent complex, this yields quantisations of $0$-shifted Poisson structures 
% (Corollaries \ref{affquantcor} and  \ref{DMquantcor}), which take the form of DQ algebroid quantisations of the \'etale structure sheaf. In particular, the hypotheses of Corollary \ref{DMquantcor} are satisfied by schemes with LCI singularities, where hitherto the conclusion was only known for
% smooth varieties.

Deformation quantisations of $1$-shifted co-isotropic structures also follow as an immediate consequence (Corollary \ref{coisocor1}) of Theorem \ref{fildefhochthm1}.  A $1$-shifted co-isotropic structure on a morphism $\fX \to \fY$ is a notion corresponding to  a $1$-shifted Poisson structure $\varpi$ on $\fY$ acting on a $0$-shifted Poisson structure $\pi$ on $\fX$.  The deformation quantisations established in Corollary \ref{coisocor1} when $\fX$ has perfect cotangent complex consist of an almost commutative $E_2$-algebra deformation of $(\sO_{\fY},\varpi)$  acting appropriately on a deformation quantisation of $(\sO_{\fX},\pi)$. 

In Section \ref{Artinsn}, we extend these results to derived Artin $n$-stacks. This follows by essentially the same argument, but is much more technically complicated because of the subtleties in formulating polyvectors and Hochschild complexes for Artin stacks.
Locally, these are defined as Tate (i.e. sum-product) realisations $\hatTot$ of double complexes arising from  formally \'etale charts by stacky thickenings of derived affine schemes (Lie algebroids, broadly speaking).   Those total complexes do not satisfy the conditions of Corollary \ref{fildefcor1}, so we introduce an intermediate category through which $\hatTot$ factorises, and in which the $P_2$-algebra of polyvectors has no non-trivial involutive deformations.
%
%%
%Our solution will be to introduce an intermediate category through which $\hatTot$ factorises, and in which our $P_k^{ac}$-algebras of polyvectors have no non-trivial involutive almost commutative deformations. The essential idea is to work with algebras in the Tate dg category of Definition \ref{strictdgdef}; with subtleties arising because that category does not contain all colimits.

For stacky thickenings of derived affine schemes, this leads to an equivalence (Theorem \ref{fildefhochthm2}) between polyvectors and polydifferential operators or the Hochschild complex, generalising Theorem \ref{fildefhochthm1} above. 
Via a form of \'etale functoriality, that yields the following corollary, where $E_1$ quantisations of $\fX$ are almost commutative curved $A_{\infty}$-algebra deformations of the rings of functions on formally \'etale stacky derived affine charts.
\begin{corollary*}[\ref{Artinquantcor}] 
Given a  
derived Artin $n$-stack $\fX$ (algebraic, $\C^{\infty}$ or analytic) with  perfect cotangent complex, %$\oL\Omega^1_{\fX/R}$, 
%and an $R$-linear Poisson structure $\pi$ on $A$, 
any even associator yields a map
from the space of $0$-shifted Poisson structures on $\fX$ to 
the space of self-dual  $E_1$ quantisations of $\fX$.

These quantisations give rise to curved $A_{\infty}$ deformations of the dg category  of perfect $\O_{\fX}$-complexes,  $\hbar$-semilinearly anti-involutive with respect to the dg endofunctor $\hom_{\sO_{\fX}}(-,\sO_{\fX})$, whenever $\fX$ is strongly quasi-compact (i.e. quasi-compact, quasi-separated \ldots).
\end{corollary*}

Any $1$-shifted Lagrangian structure on a morphism gives rise to a $1$-shifted co-isotropic structure by \cite[Theorem 4.22]{MelaniSafronovII}. Examples  of these from  \cite{calaqueLagrangian} include quotient stacks $[Y/G]$ when $Y$ is equipped with a Hamiltonian or quasi-Hamiltonian structure (Lagrangian over $[\g^*/G]$ and $[G/G^{\ad}]$, respectively), and the derived moduli stack $\Perf(\Sigma)$ of perfect complexes on a del Pezzo surface $\Sigma$ (Lagrangian over $\Perf(E)$ for an elliptic curve $ E \subset \Sigma$). A $1$-shifted co-isotropic structure on a morphism gives  a $0$-shifted Poisson structure on its source,  to which Corollary \ref{Artinquantcor} can then be applied. We can however say more, with Corollary \ref{coisocor2} showing that,  when the source has perfect cotangent complex, $1$-shifted co-isotropic structures admit deformation quantisations in the form of an $E_2$ quantisation of the target acting on an $E_1$ quantisation of the source. 

% Such structures naturally arise on $1$-shifted Lagrangians, such as the derived moduli stack of perfect complexes on a del Pezzo surface. Examples of derived stacks naturally carrying such structures include moduli of local systems on interiors of compact oriented topological surfaces with boundary,%%not interesting as \cite[Example 3.2]{calaqueLagrangian} observes it's already a smooth DM stack
% and moduli of perfect complexes on a del Pezzo surface. %%%another class of examples which stands out is that moment maps (i.e. Hamiltonians) $X \to \g^*$  give such structures on $[X/G]$, but then $X$ has to be smooth --- that's his 2.2.1. His 2.2.2 is quasi-Hamiltonian, on $[X/G]$ so smooth again. 2.2.3 is Lagrangian intersection. 2.2.4 does mapping stacks. 
% 

I would like to thank Ted Voronov for alerting me to a missing symmetrisation in the definition of Poisson $L_{\infty}$-morphisms, Brent Pym and Travis Schedler for letting me know of the examples in \cite{mathieu}, and the anonymous referee for suggesting several improvements.

%%{coisormk2} omitted from intro, as covered by statement that all extend?

\subsection*{Notation and terminology}

We write CDGAs (commutative differential graded algebras) and DGAAs (differential graded associative algebras) as chain complexes (homological grading), and denote the differential on a chain complex by $\delta$. 

The graded vector space underlying a chain (resp. cochain) complex $V$ is denoted by $V_{\#}$ (resp. $V^{\#}$). Since we often have to work with chain and cochain structures separately, we denote shifts as subscripts and superscripts, respectively, so $(V_{[i]})_n:= V_{i+n}$ and $(V^{[i]})^n:= V^{i+n}$.

Given a DGAA $A$, and $A$-modules $M,N$ in chain complexes, we write $\HHom_A(M,N)$ for the chain complex given by
\[
 \HHom_A(M,N)_i= \Hom_{A_{\#}}(M_{\#},N_{\#[i]}),
\]
with differential $\delta f= \delta_N \circ f \pm f \circ \delta_M$,
where $V_{\#}$ denotes the graded vector space underlying a chain complex $V$.

 When we need to compare chain and cochain complexes, we  make use of the equivalence  $u$ from chain complexes to cochain complexes given by $(uV)^i := V_{-i}$, and refer to this as rewriting the chain complex as a cochain complex (or vice versa). On suspensions, this has the effect that $u(V_{[n]}) = (uV)^{[-n]}$.

\tableofcontents

\section{Involutively filtered deformations of  Poisson algebras}\label{fildefpoisssn}

% In this section, we characterise a certain class of filtered deformations of Poisson algebras equipped with anti-involutions. We initially work in a very general setting, in order to obtain results we can immediately apply to quantisation problems for diagrams, and with a view to possible future generalisations. 
% Of the results in the first two subsections, readers might find it easier initially just to concentrate on  Definition \ref{involutivePacdef}, Lemma \ref{fildeflemma} and  Remark \ref{fildefrmk}, which suffice  for  applications to   derived affine schemes. %Deligne--Mumford stacks. %%reword that; have cut, as we don't now work in general setting

We will assume that all filtrations are increasing and exhaustive, unless stated otherwise. An action of the algebraic group $\bG_m$ on a vector space $V$ is equivalent to a decomposition $V=\bigoplus_{i} \cW_iV$; given a $\bG_m$-equivariant vector space $V$, we then 
%write $\cW_iV$ for the summand of weight $i$, and 
define a weight filtration $W$ on $V$ by setting $W_nV:=\bigoplus_{i \le n} \cW_iV$.  For $\bG_m$-equivariant complexes $U,V$, we write $\cW_i\HHom(U,V)$ for the complex $\prod_j \HHom(\cW_jU, \cW_{i+j}V)$ of homomorphisms of weight $i$, with similar conventions for complexes of derivations etc.; beware that the inclusion  $\bigoplus_i \cW_i\HHom(U,V) \to  \HHom(U,V)$ is not surjective in general.

\subsection{Involutively filtered deformations of \tps{$\cP$}{P}-algebras}

We now adapt an idea developed in \cite[\S 14.3.2]{mhs2}, using Rees constructions to interpret the problem of recovering a filtered algebra from its associated graded algebra as a deformation problem.

\begin{definition}\label{involutivelyfiltereddef}
 We say that a vector space $V$ is involutively filtered if it is equipped with a filtration $W$ and an involution $e$ which preserves $W$ and acts on $\gr^W_iV$ as multiplication by $(-1)^i$.

We say that a chain complex $V_{\bt}$ is quasi-involutively filtered if it is equipped with a  filtration $W$ by subcomplexes and an involution $e$ which preserves $W$ and acts on $\H_*(\gr^W_iV)$ as multiplication by $(-1)^i$.
\end{definition}
Observe that if $V$ is involutively filtered, then the involution gives an eigenspace  decomposition $V= V^{e=1}\oplus V^{e=-1}$. Because  $(\gr^W_{i-1}V)^{e=(-1)^i}=0$, we have $W_{2j+1}V^{e=1}=W_{2j}V^{e=1}$ and $W_{2j}V^{e=-1}=W_{2j-1}V^{e=-1}$. If $V_{\bt}$ is quasi-involutively filtered, we similarly have that $W_{2j}V^{e=1}\subset W_{2j+1}V^{e=1}$ and $W_{2j-1}V^{e=-1}\subset W_{2j}V^{e=-1} $ are quasi-isomorphic subcomplexes. 

\begin{definition}\label{Wedef}
 Given a quasi-involutively filtered chain complex $(V_{\bt},W,e)$, define the  involutively filtered chain complex  $(V_{\bt},W^e,e)$   by setting  
 \[
 W^e_nV_{\bt}:=  (W_nV_{\bt})^{e=(-1)^n} \oplus (W_{n-1}V_{\bt})^{e=(-1)^{n-1}}.
 \]
\end{definition}

\begin{lemma}\label{quasiinvlemma}
 After localisation at  filtered quasi-isomorphisms, the inclusion functor from    involutively filtered  chain complexes to  quasi-involutively filtered  chain complexes gives an equivalence of $\infty$-categories.
\end{lemma}
\begin{proof}
We can identify the involutively filtered objects as  those   $(V_{\bt},W,e)$ for which  the natural transformation $\vareps_V \co (V_{\bt},W^e,e) \to  (V_{\bt},W,e) $ is an isomorphism, so the functor $ (V_{\bt},W,e) \mapsto (V_{\bt},W^e,e)$ is right adjoint to the inclusion functor, with $\vareps$ the co-unit of the adjunction. 
 
It remains to show that $\vareps_V$ is a filtered quasi-isomorphism for all quasi-involutively filtered  chain complexes. Observe that $W_{n-1}V_{\bt}$ is contained in $ W_n^eV_{\bt}$, with quotient isomorphic to $\gr^W_nV_{\bt}^{e=(-1)^n}$, which implies that the inclusion $W_n^eV_{\bt} \into W_nV_{\bt}$ is indeed  a quasi-isomorphism by quasi-involutivity of $W$.
 \end{proof}

%%cut 0 here

\begin{definition}\label{hbardef}
 Define the $\bG_m$-equivariant algebra $\Q[\hbar]$ by setting $\hbar$ to have  weight $1$ 
for the $\bG_m$-action (and implicitly homological degree $0$). 
\end{definition}

% \begin{definition}
%  We say that an increasing  filtration $W$ on a vector space $V$ is  exhaustive if $V= \LLim_n W_nV$ and
%  complete if the map $V \to \Lim_{n \to \infty} V/W_{-n}V$ is an isomorphism.
% \end{definition} %%exhaustive assumed

\begin{definition}\label{reesdef} 
 For a quasi-involutively filtered chain complex $(V, W_iV,e)$ over $\Q$, we define
 \[
 \Rees^e_W(V):=\bigoplus_n  \hbar^n \{ v \in  W_nV~:~e(v)=(-1)^nv\}.
 \]
This is a $\bG_m$-equivariant chain complex of  $\Q[\hbar^2]$-modules, where the $\bG_m$-action is induced by the action on $\hbar$.
 \end{definition}
 
 %% I've scrapped completeness

\begin{lemma}\label{reeslemma}
 The involutive Rees functor of Definition \ref{reesdef} gives an equivalence between the category of %exhaustive 
 involutively filtered $\Q$-vector spaces   and the category of flat $\bG_m$-equivariant  $\Q[\hbar^2]$-modules.
 
 For quasi-involutively filtered complexes $(V,W,e)$, it also satisfies $\Rees^e_{W^e}(V)\cong \Rees^e_{W}(V)$, so factorises the $\infty$-equivalence of Lemma \ref{quasiinvlemma}.
 
  \end{lemma}
 \begin{proof}
 We adapt the equivalence between exhaustively  filtered vector spaces and flat $\bG_m$-equivariant $\Q[\hbar]$-modules from for instance \cite[Lemma 2.1]{mhs2}.
%  \ref{mhs2-flatfiltrn}]

% First observe that if $W$ is complete, then  $ \cW_i\Rees^e_W(V)\cong \Lim_n  \cW_i\Rees^e_W(V)/\hbar^{2n}\cW_{i-2n}\Rees^e_W(V)$, so $\Rees^e_W(V)$ is indeed  $\bG_m$-equivariantly $\hbar^2$-adically complete. Also note that $ \Rees^e_{W^e}(V)=\Rees^e_{W}(V)$ by construction.
 %
 The functor $\Rees^e$ has a left adjoint, which  sends a $\bG_m$-equivariant  $\Q[\hbar^2]$-module $M$ in complexes to  %(in both complete and non-complete cases) 
the complex  $M/(\hbar^2-1) $ equipped with involution $-1 \in \bG_m$ and  filtration 
 \begin{align*}
 W_n( M/(\hbar^2-1))&:= (\bigoplus_{i \le n}\cW_iM)/(\hbar^2-1) (\bigoplus_{i \le n-2}\cW_iM)\\
 &\cong \cW_nM \oplus \cW_{n-1}M;
 \end{align*}
flatness of $M$ ensures that the map $W_n( M/(\hbar^2-1))\to M/(\hbar^2-1)$ is indeed an inclusion. %When $M$ is $\hbar^2$-adically complete (i.e.  $\cW_nM=\Lim_r \cW_nM/\hbar^{2r}\cW_{n-2}M$), this gives a complete filtration.
 
 Evaluation at $\hbar=1$  gives an morphism  $\Rees^e_W(V)/(\hbar^2-1) \to V   $, the co-unit of the adjunction. When $V$ is involutively filtered, this map is a filtered isomorphism. Even when $V$ is not involutively filtered, it follows  easily  that $W_n (\Rees^e_W(V)/(\hbar^2-1))\cong W_n^eV$. Thus the involutive  Rees functor factorises the $\infty$-equivalence of Lemma \ref{quasiinvlemma}, and the co-unit of the adjunction is a natural isomorphism when restricted to involutively filtered objects. 
% \[
%  W_n(\Rees^e_W(V)/(\hbar^2-1)) \cong W_nV^{e=(-1)^n} \oplus W_{n-1}V^{e=(-1)^{n-1}
% \]

Meanwhile, for any $\bG_m$-equivariant  $\Q[\hbar^2]$-module $M$  we have 
\[
\cW_n\Rees^e_W(M/(\hbar^2-1)) \cong  (\cW_n \oplus \cW_{n-1}M)^{e=(-1)^n}= \cW_nM,
 \]
 so the unit of the adjunction is  a natural isomorphism.
  \end{proof}

 \begin{remark} %% think about whether we want to incorporate this in the preceeding lemma??
  Although we are assuming that filtrations are exhaustive, beware that we are not assuming they are Hausdorff. For instance, the $\bG_m$-equivariant  $\Q[\hbar^2]$-module $V[\hbar^2,\hbar^{-2}]$ corresponds under Lemma \ref{reeslemma} to the vector space $V$ with  trivial involution and filtration $W_iV=V$ for all $i$. % while $V[\hbar,\hbar^{-1}]$ corresponds to $V\oplus  V$ with involution $(1,-1)$ and the same filtration.
  
  However, in applications we only ever work with filtrations $W$ on algebras satisfying the stronger condition $W_{-1}V=0$, and our filtrations on operads will always be complete.
  \end{remark}

 For the remainder of this subsection, we fix  a $\bG_m$-equivariant operad $\cP$ in chain complexes over $\Q$.
 
\begin{definition}\label{filcatdef} %% replaces {filspacedef}
% Given a $\bG_m$-equivariant operad $\cP$ over $\Q$, %%would be better for $\cP$ to be complete and (quasi-)involutively filtered here? Then we'd be asking for some sort of $e$ compatibility, which would just be that the structure maps commute with $e$ in the obvious way.
Define the category of quasi-involutively filtered  $(\cP,W,e)$-algebras  to consist of  quasi-involutively filtered chain complexes $(V,W_iV,e)$ of $\Q$-vector spaces equipped with a $\cP$-algebra structure which is compatible with the involution $-1 \in \bG_m$ on $\cP$ and compatible with the filtration in the sense that the structure maps $\cP(n) \ten V^{\ten n} \to V$ restrict to maps
\[
 \cW_i\cP(n)\ten W_r(V^{\ten n}) \to W_{r+i}V
\]
for all $r$ and $i$, where $W_r(V^{\ten n}):= \sum_{r_1+\ldots +r_n=r} (W_{r_1}V)\ten \ldots \ten (W_{r_n}V)$.
\end{definition}

The involutive Rees functor  is clearly lax monoidal, i.e.   $ \Rees^e_W(V)\ten_{\Q[\hbar^2]}\Rees^e_W(V) \to \Rees^e_W(U\ten_{\Q}V) $, as is its left adjoint $M \mapsto M/(\hbar^2-1)$,  so Lemma \ref{reeslemma} has the following immediate consequence:
\begin{lemma}\label{reeslemmaP} %% have removed completeness aspects
%  Given a $\bG_m$-equivariant operad $\cP$ over $\Q$, 
The functor of Definition \ref{reesdef} gives an equivalence of $\infty$-categories from the category of %exhaustive 
  quasi-involutively filtered  $(\cP,W,e)$-algebras
  localised at filtered quasi-isomorphisms to the $\infty$-category of  $\bG_m$-equivariant  $\cP$-algebras in chain complexes of flat  $\Q[\hbar^2]$-modules,   localised at quasi-isomorphisms. 
  \end{lemma}

%$ W_iU^{e=(-1)^i}\ten W_jV^{e=(-1)^j} \to W_{i+j}(U\ten V)^{e=(-1)^{i+j}}$ and $\hbar^2$ fine.

% SUMMARY: we're picking up potentially non-cofibrant defs, but on some level we should care because we just want to argue that objects are QIMic. $\H^2$ vanishing is irrelevant for us, as we have a lift. We show inductively that $\oR\map(P,Q) \to \oR\map(\bar{P},\bar{Q})$ is a w.e., so we have $\infty$-full and faithful on our objects, which is all we need, since inductively we know we're lifting the trivial rep. THIS. Looking fairly clean

Note that there is a projective model structure on $\bG_m$-equivariant  $\cP$-algebras in chain complexes, for which fibrations are surjections and weak equivalences are quasi-isomorphisms. Existence of this cofibrantly generated model structure follows from  \cite[Theorem 11.3.2]{Hirschhorn} applied to the forgetful functor to $\bG_m$-equivariant chain complexes of $\Q$-vector spaces.

\begin{definition}\label{DDerdef}
Given 
% a $\bG_m$-equivariant operad $\cP$ over $\Q$,
a $\bG_m$-equivariant  $\cP$-algebra $A$, we write $\DDer_{\cP,\bG_m}(A,M)$ for the complex of $\bG_m$-equivariant $\cP$-derivations from $A$ to $M$. We also write $\oR\DDer_{\cP,\bG_m}(A,M) $ for the complex of derived derivations, given as in \cite{Q} by $\DDer_{\cP,\bG_m}(\tilde{A},M) $ for any cofibrant replacement $\tilde{A}$ of $A$ in the projective model structure. 

Here, $M$ is a Beck $A$-module, meaning that $A \oplus M$ is a  $\bG_m$-equivariant  $\cP$-algebra for which the projection map $A \oplus M \to A$ and the addition map $(A\oplus M)\by_A(A \oplus M) \to A \oplus M$ are both $\cP$-algebra homomorphisms. Explicitly, 
\[
 \DDer_{\cP,\bG_m}(A,M)_n:= \Hom(A, A \oplus \cone(M)_{[n+1]} )\by_{\Hom(A, A)}\{\id\},
\]
where $\Hom$s on the right-hand side are in the category of $\cP$-algebras in $\bG_m$-equivariant chain complexes.
The differential $\DDer_{\cP,\bG_m}(A,M)_n\to \DDer_{\cP,\bG_m}(A,M)_{n-1}$ is then induced by the obvious map $\cone(M)_{[n+1]} \to \cone(M)_{[n]} $.
\end{definition}

\begin{proposition}\label{defPprop} 
 Take quasi-involutively filtered  $(\cP,W,e)$-algebras $A,B$ such that the filtration on $B$ is complete (satisfied in particular if $W_{-1}B=0$), together with a morphism $f \co \gr^W A \to \gr^W B$ of the associated $\bG_m$-equivariant $\cP$-algebras.  
 
 If 
 \[
\H_i \oR\DDer_{\cP, \bG_m}(\gr^W A, \hbar^{2(n+1)}\gr^WB )\cong 0
 \]
 for all $n\ge 0$ and all $i \ge -1$, then  the  homotopy fibre over $f$ of the   associated graded functor  
 \[
 \gr^W\co \oR\map_{(\cP,W,e)}(A,B) \to \oR\map_{\cP,\bG_m}(\gr^WA,\gr^WB) 
 \]
 on mapping spaces in the respective $\infty$-categories is contractible.  
\end{proposition}
\begin{proof}
By Lemma \ref{reeslemmaP},  $\oR\map_{(\cP,W,e)}(A,B) \simeq \oR\map_{\Q[\hbar^2]\ten \cP,\bG_m}(\Rees^e_WA,\Rees^e_WB)$. Since the filtration on $B$ is complete, we have $\Rees^e_WB\cong \Lim_n (\Rees^e_WB)/\hbar^{2n}$, the limit being taken in the $\bG_m$-equivariant category (i.e. separately in each weight). Since the maps $(\Rees^e_WB)/\hbar^{2n+2} \to (\Rees^e_WB)/\hbar^{2n} $ are all surjective, they are fibrations in the projective model structure, and thus the limit is a homotopy limit, so
\[
 \oR\map_{\cP[\hbar^2],\bG_m}(\Rees^e_WA,\Rees^e_WB) \simeq \ho\Lim_n \oR\map_{\cP[\hbar^2],\bG_m}(\Rees^e_WA,(\Rees^e_WB)/\hbar^{2n}),
\]
where we write $\cP[\hbar^2]$ for the operad $\Q[\hbar^2]\ten \cP$

Since the natural map $ (\Rees^e_WA)/\hbar^2 \to \gr^WA$ is a quasi-isomorphism, and similarly for $B$, we have
\[
 \oR\map_{\cP[\hbar^2],\bG_m}(\Rees^e_WA,(\Rees^e_WB)/\hbar^{2})\simeq \oR\map_{\cP,\bG_m}(\gr^WA,\gr^WB),
\]
so it suffices to show that the maps
\[
 \oR\map_{\cP[\hbar^2],\bG_m}(\Rees^e_WA,(\Rees^e_WB)/\hbar^{2n+2})\to  \oR\map_{\cP[\hbar^2],\bG_m}(\Rees^e_WA,(\Rees^e_WB)/\hbar^{2n})
\]
are all equivalences. 

We now invoke a standard obstruction theory argument. Since $\hbar^{2n}\Q \subset  \Q[\hbar^2]/\hbar^{2n+2}$ is an ideal, 
there is an obvious CDGA structure on the chain complex $C':= \cone(\hbar^{2n}\Q \to  \Q[\hbar^2]/\hbar^{2n+2})$,  with the  quotient map   $C' \to \Q[\hbar^2]/\hbar^{2n} $ being a quasi-isomorphism. %, with kernel $\cone(\hbar^{2n}\Q \to\hbar^{2n}\Q)$. 
Moreover, since $(\hbar^{2n})\cdot (\hbar^2)=0$ in  $ \Q[\hbar^2]/\hbar^{2n+2}$, the natural surjection $ C' \to \cone(\hbar^{2n}\Q \xra{0}  \Q)$ is also a CDGA map, with 
\[
 \Q[\hbar^2]/\hbar^{2n+2}\cong C'\by_{\cone(\hbar^{2n}\Q \xra{0}  \Q)}\Q.
\]

Tensoring with $\Rees^e_W(B)$ over $\Q[\hbar^2]$, this  gives rise to a quasi-isomorphism 
\[
C:=\cone(\hbar^{2n}\co  \Rees^e_W(B)/\hbar^2 \to   \Rees^e_W(B)/\hbar^{2n+2}) \xra{\alpha} \Rees^e_W(B)/\hbar^{2n} 
\]
of $\bG_m$-equivariant $\cP[\hbar^2]$-algebras, together with an isomorphism
\[
 \Rees^e_W(B)/\hbar^{2n+2} \cong C\by_{ \hbar^{2n}(\Rees^e_W(B)/\hbar^2)_{[-1]} \oplus \Rees^e_W(B)/\hbar^2 }\Rees^e_W(B)/\hbar^2.
\]

That fibre product is a homotopy fibre product because $\alpha$ is surjective, so we have a homotopy pullback square
\[
\begin{CD}
 \oR\map_{\cP[\hbar^2],\bG_m}(\Rees^e_W(A),\Rees^e_W(B)/\hbar^{2n+2}) @>>> \oR\map_{\cP,\bG_m}(\gr^WA,\gr^WB)\\
@VVV @VVV \\ 
 \oR\map_{\cP[\hbar^2],\bG_m}(\Rees^e_W(A),\Rees^e_W(B)/\hbar^{2n})@>>> \oR\map_{\cP,\bG_m}(\gr^WA,\gr^WB\oplus\hbar^{2n} \gr^WB_{[-1]}).
\end{CD}
\]
Taking homotopy fibres over $f \in \oR\map_{\cP,\bG_m}(\gr^WA,\gr^WB)$ gives a homotopy fibre sequence
\begin{align*}
 &\oR\map_{\cP[\hbar^2],\bG_m}(\Rees^e_W(A),\Rees^e_W(B)/\hbar^{2n+2})_f\\
 &\quad \to \oR\map_{\cP[\hbar^2],\bG_m}(\Rees^e_W(A),\Rees^e_W(B)/\hbar^{2n})_f\\
 &\quad \quad \phantom{\to }\, \to \oR\map_{\cP,\bG_m}(\gr^WA ,\gr^WB\oplus\hbar^{2n} \gr^WB_{[-1]})_f,
\end{align*}
but 
\[
\pi_j \oR\map_{\cP,\bG_m}(\gr^WA ,\gr^WB\oplus\hbar^{2n} \gr^WB_{[-1]})_f \cong \H_{j-1}\oR\DDer_{\cP, \bG_m}(\gr^W A, \hbar^{2n}\gr^WB )
\]
which is $0$ by hypothesis for all $j>0$. Thus  the base of the fibration is contractible, which gives the desired equivalence.
 \end{proof}
 
 \begin{corollary}\label{defPcora}%%might just cut this
  If $A$ is a quasi-involutively filtered  $(\cP,W,e)$-algebra such that the filtration  is complete and
 $
\H_i \oR\DDer_{\cP, \bG_m}(\gr^W A, \hbar^{2n}\gr^WA)\cong 0
 $
 for all $n>0$ and all $i \ge -1$, then $A$ is quasi-isomorphic to $\gr^WA$ as a $(\cP,W,e)$-algebra.
 \end{corollary}
\begin{proof}
Proposition \ref{defPprop} implies that the $\infty$-category of  quasi-involutively filtered  $(\cP,W,e)$-algebras $B$ with fixed quasi-isomorphism $\gr^WB\simeq \gr^WA$  is contractible (by taking $f$ to be  the identity map on $\gr^WA$). Since $A$ and $\gr^WA$ both lie in this category, it follows that they are quasi-isomorphic.
\end{proof}

  \begin{corollary}\label{defPcor}
   The associated graded functor from the $\infty$-category of quasi-involutively filtered  $(\cP,W,e)$-algebras  to the category of $\bG_m$-equivariant $\cP$-algebras becomes an equivalence when restricted to objects $A$ satisfying the conditions:
   \begin{enumerate}
    \item\label{poswgt} $\gr^W_iA \simeq 0$ for all $i<0$, and
    \item\label{wgtsunder2}  $\oR\DDer_{\cP, \bG_m}(\gr^W A, M) \simeq 0$ for all Beck $\gr^WA$-modules $M$ with $\cW_iM \simeq 0$ for all $i<2$. %%ie. want $A$ generated in weights $0,1$ 
   \end{enumerate}
   The quasi-inverse functor is given by sending a $\bG_m$-equivariant $\cP$-algebra to the underlying filtered algebra.
  \end{corollary}
  In particular, note that these conditions are satisfied if $\gr^WA$ has a cofibrant replacement generated in weights $\le 1$.
  \begin{proof}
   Given $A$ satisfying condition (\ref{wgtsunder2}), $B$ satisfying condition (\ref{poswgt}), and a   morphism $f \co \gr^W A \to \gr^W B$, observe that the Beck $A$-module $\hbar^{2(n+1)}\gr^WB$ is acyclic in weights below $2n+2$, so for all $n\ge 0$ we have
   \[
    \oR\DDer_{\cP, \bG_m}(\gr^W A, \hbar^{2(n+1)}\gr^WB )\simeq 0
   \]
   by hypothesis on $A$.
   Thus Proposition \ref{defPprop} ensures that the restricted functor is full and faithful. The functor sending a grading to its underlying increasing filtration is clearly a right inverse to $\gr^W$, and hence a quasi-inverse.
  \end{proof}

\begin{remark}\label{filteredoperadrmk1}
If we take a quasi-involutively filtered dg operad $\cQ$ with an involutive equivalence $\gr^W\cQ \simeq \cP$, then the conditions of 
 Proposition \ref{defPprop} give the same conclusion when $A$ and $B$ are quasi-involutively filtered  $(\cQ,W,e)$-algebras, with the same proof, because the associated graded pieces are the same. Since $\gr^W\cQ$-algebras are not then canonically $\cQ$-algebras, the analogous statement to Corollary \ref{defPcor} is just that the  associated graded functor from quasi-involutively filtered  $(\cQ,W,e)$-algebras  to the category of $\bG_m$-equivariant $\cP$-algebras is full and faithful when restricted to objects satisfying those conditions.  
 
 Essential surjectivity also  follows if we assume $\H_{<0}\cP=0$ and restrict to  $\cQ$-algebras $A$ satisfying $\H_{<0}(\gr^WA)=0$, because if the condition $\H_{-2} \oR\DDer_{\cP, \bG_m}(B, \hbar^{2(n+1)}B)=0$ holds for all $n\ge 0$,  then there must exist a quasi-involutively filtered  $(\cQ,W,e)$-algebra $A$ with $\gr^WA \simeq B$. This follows by the following deformation argument similar to \cite{KS,hinichDefsHtpyAlg}. By taking a cofibrant model for $B$ concentrated in non-negative degrees and lifting  generators, we have a quasi-free $\Rees^e_W(\cQ)$-algebra $\tilde{B}$ in graded vector spaces, on which we wish to define a closed derivation $\tilde{\delta}$ lifting $\delta$; the quasi-freeness and concentration conditions will ensure the dg algebra $(\tilde{B},\tilde{\delta})$ is cofibrant. We construct $\tilde{\delta}$ on generators inductively in $\hbar^2$, with the potential obstruction to lifting from $\tilde{B}/\hbar^{2n}$ to $\tilde{B}/\hbar^{2(n+1)}$ being the class $[\tilde{\delta}\circ \tilde{\delta}]$ in the space of derivations above. 
 \end{remark}

%%cut 1 here

\subsection{Almost commutative Poisson algebras}\label{polyvectorsn} %%moved that label from the commented out subsection, since otherwise this does nowt

We now consider non-unital  $P_k$-algebras  (i.e. $(k-1)$-shifted Poisson algebras); these are non-unital CDGAs equipped with a Lie bracket of chain degree $k-1$ acting as a biderivation. They are governed by an operad $P_k$ which can be written as $\Com \circ (s^{1-k}\Lie)$ via a distributive law (cf. \cite[\S 8.6]{lodayvalletteoperads}), for the  operads $\Com,\Lie$ governing non-unital commutative algebras and Lie algebras, where the shift $s\cP$ of a dg operad $\cP$ is given by $(s\cP)(n):= \cP(n)_{[n-1]}$.

\begin{definition}\label{involutivePacdef}
Define the $\bG_m$-equivariant dg  operad $P_k^{ac}$ to be the dg operad $ \Com \circ s^{1-k}\hbar^{-1}\Lie$, where $(\hbar^j \cP)(i):= \hbar^{j(i-1)}\cP(i)$ for any operad $\cP$, and as in Definition \ref{hbardef}, $\hbar$ has degree $0$ and weight $1$ %%thus the bracket having weight $-1$, since this is saying plus $1$. Do I just mean $\hbar^{-1}$ here?
for the $\bG_m$-action. %$\delta \dbar^*=\hbar$

 %equipped with the $\bG_m$-action for which $\Com$ has weight $0$ and $\Lie(m)$ has weight $1-m$.

%Define an almost commutative $P_k$-algebra over a CDGA $R$ to be an  $(R \ten P_k,W)$-algebra $A$ in  complete filtered  $R$-chain complexes, where $W_iP_k$ is spanned by terms  of weight $\le i$ for the $\bG_m$-action on $P_k^{ac}$.

 Define a  quasi-involutive a.c. $P_k$-algebra over a CDGA $R$ to be an  $(R \ten P_k^{ac},W,e)$-algebra $A$ in 
 %complete
%  exhaustive 
quasi-involutively  filtered  chain complexes. Here, $R \ten \cP$ is the operad $(R\circ \cP)(n):=R\ten\cP(n) $ with operad structure 
coming from the distributive law (as in \cite[\S 8.6]{lodayvalletteoperads}) $\cP(n)\ten R^{\ten n} \to R \ten \cP(n)$ given by the multiplication on $R$. 
% where $W_iP_k$ is spanned by terms  of weight $\le i$ for the $\bG_m$-action on $P_k^{ac}$.
\end{definition}
Thus a  $\bG_m$-equivariant $P_k^{ac}$-algebra is a $P_k$-algebra equipped with a $\bG_m$-action for which multiplication has weight $0$ and the Lie bracket has weight $-1$. A quasi-involutive a.c. $P_k$-algebra over $R$ is a $P_k$-algebra in $R$-modules, equipped with an increasing % complete 
% exhaustive 
filtration $W$ 
 satisfying $W_i\cdot W_j \subset W_{i+j}$ and $[W_i,W_j] \subset W_{i+j-1}$, together with an involution $*$ preserving the filtration, satisfying $(a\cdot b)^*=a^*\cdot b^*$ and $[a,b]^*=-[a^*,b^*]$, and acting as $(-1)^i$ on $\H_*\gr^W_i$. 
 
 Note that  the chain degree of a non-zero element of the operad $P_k^{ac}$ is always $(1-k)$ times its weight.

%%! cut 2 here 

\begin{lemma}
 The Koszul dual of the $\bG_m$-equivariant dg operad $P_k^{ac}$ is given by $s^{k-1}\hbar P_k^{ac}$.
\end{lemma}
\begin{proof}
The proof of \cite[8.6.11]{lodayvalletteoperads} adapts to this shifted and graded setting (cf. \cite[Appendix C]{fresseOperadsGT2}) to give
\begin{align*}
(P_k^{ac})^! &= (\Com \circ s^{1-k}\hbar^{-1}\Lie)^!\\
 &=   (s^{1-k}\hbar^{-1}\Lie)^!\circ \Com^!\\
&\cong (s^{k-1}\hbar\Com)\circ \Lie\\
&= s^{k-1}\hbar(\Com \circ (s^{1-k}\hbar^{-1}\Lie))\\
&=s^{k-1}\hbar P_k^{ac}. \qedhere
\end{align*}
\end{proof}

We now fix a CDGA $R$ over $\Q$. There is a  model structure on the category of $\bG_m$-equivariant $R$-chain complexes (the projective model structure), in which fibrations are surjections.

Before we state the next proposition, recall that the cotangent complex $\oL\Omega^1$ for CDGAs is defined as the left-derived functor of K\"ahler differentials $\Omega^1$. 
For a morphism $f \co A \to B$ of CDGAs, that means  $\oL\Omega^1_{B/A}$ (defined up to $B$-linear quasi-isomorphism) is given by the $B$-module $\Omega^1_{\tilde{B}/A}\ten_{\tilde{B}}B$ for any factorisation $A \to \tilde{B} \to B$ of $f$ as a cofibration followed by a weak equivalence. 

For a morphism $A \to B$ of non-unital $R$-CDGAs, we then write $\oL\Omega^1_{B/A} := \oL\Omega^1_{(R \oplus B)/(R \oplus A)}$, using the equivalence between $B$-modules and modules for the unital ring $R \oplus B$. Note that this agrees with the definition above for  morphisms of unital CDGAs because then there are isomorphisms $R \oplus A \cong R \by A$ and $R \oplus B \cong R  \by B$ sending $(r,x)$ to $(r, r+x)$, with $\oL\Omega^1_{(R \by  B)/(R \by A)} \simeq \oL\Omega^1_{B/A}$ since cotangent complexes satisfy \'etale descent  and $ \oL\Omega^1_{R/R}\simeq 0$.

The following  lemma runs on  similar lines to \cite{melaniPoisson}:
\begin{lemma}\label{polyvectorlemma} 
 Given a  $\bG_m$-equivariant  $P_k^{ac}$-algebra  $A$ in $R$-chain complexes which is  cofibrant as a $\bG_m$-equivariant $R$-CDGA (unital or non-unital), together with a Beck $A$-module $M$, there is a model for $\oR\DDer_{P_k^{ac}\ten R, \bG_m}(A, M)$ which has a
 complete decreasing filtration $F^1 \supset F^2 \supset \ldots $, %%decreasing, presumably --- that makes sense, as it maps to comm derivations as quotient bu $F^1$
with associated graded complexes quasi-isomorphic to
 \[
  \HHom_A(\CoS_A^p((\Omega^1_{A/R})_{[-k]}),\hbar^{p-1}M)^{\bG_m}_{[-k]} \quad \text{ or }\quad \HHom_{R \oplus A}(\CoS_{R \oplus A}^p((\Omega^1_{A/R})_{[-k]}),\hbar^{p-1}M)^{\bG_m}_{[-k]}
 \]
in the unital and non-unital cases respectively,
 where $\Omega^1$ denotes the  complex of K\"ahler differentials of the underlying CDGA 
and  $\CoS_A^p(N) :=\Co\Symm^p_A(N)= (N^{\ten_A p})^{\Sigma_p}$ the cosymmetric powers.
\end{lemma}
\begin{proof}
Adapting the formulae of \cite[\S 11.2]{lodayvalletteoperads} to the $R$-linear setting,  the Koszul duality map  $\alpha \co  (s (P_k^{ac})^!)^{\vee} \to P_k^{ac}$ gives rise to a cobar-bar adjunction  $\Omega_{\alpha} \dashv \b_{\alpha} $ between $P_k^{ac}$-algebras in $R$-chain complexes and $s(P_k^{ac})^!$-coalgebras  in $R$-chain complexes. Moreover,  $\Omega_{\alpha}\b_{\alpha}A$ is a cofibrant resolution of $A$ \cite[Theorem 11.4.7]{lodayvalletteoperads},  so
 \[
  \oR\DDer_{P_k^{ac}\ten R, \bG_m}(A, M)\simeq \DDer_{P_k^{ac}\ten R, \bG_m}(\Omega_{\alpha}\b_{\alpha}A,M).
 \]

 Since $\Omega_{\alpha}C $ is freely generated by $C_{[1]}$, the latter complex can be rewritten as 
\[
(\HHom_R(\b_{\alpha} A,M)_{[-1]}^{\bG_m},  \delta +[\omega_A \circ \alpha,-]),
 \]
 where $\omega_A$ encodes the data defining the $P_k^{ac}$-algebra structure on $A$ and the
 bracket is  defined via the convolution product of \cite[6.4.4]{lodayvalletteoperads} (which is of weight $0$ for the $\bG_m$-action).
 
Substituting for  $\b_{\alpha}$, we can rewrite this complex as
\begin{align*}
& (\prod_n(\hbar^{n-1}P_k^{ac}(n)_{[k(n-1)]}\ten^{\Sigma_n}\HHom_R(A^{\ten n},M))^{\bG_m}, \delta +[\omega_A\circ \alpha,-]),\\
 &\cong  (\HHom_{R}( \Co\Symm^+(\hbar^{-1}\Co\Lie(A_{[-1]})_{[1-k]} )_{[k]}\hbar,M)^{\bG_m}, \delta +[\omega_A \circ \alpha,-]).
 \end{align*}
Since the differential on $ \b_{\alpha}A$ is non-increasing on cosymmetric powers, it follows that  this dual complex has a complete decreasing filtration $F$, with $F^p$ given by terms involving $\Co\Symm^i$ for $i\ge p$. 
%% $\CoS^2A \to A$ and $\Co\Lie_2A \to A$ is relevant diffl, hence $\CoS^p \to \CoS^p \oplus \CoS^{p-1}$, so on taking suals we get $\HHom(\CoS^p,M) \to \HHom(\CoS^p\oplus \CoS^{p+1},M)
 The associated graded pieces are then given by
\[
 \gr_F^p  \cong  (\HHom_{R}( \Co\Symm^p(\hbar^{-1}\Co\Lie(A_{[-1]})_{[1-k]} )_{[k]}\hbar,M)^{\bG_m}, \delta +[\omega_A \circ \alpha,-]).
\]

In order to proceed further, we use the Koszul duality map
$
 \beta \co (s\Lie)^{\vee} \to \Com,
$
which
as in  \cite[\S 11]{lodayvalletteoperads} %then
 gives us a cofibrant resolution $A':=\Omega_{\Com}\b_{\Com} A$ of the non-unital CDGA underlying $A$, on generators $(\b_{\Com} A)_{[1]}$. Repeating the argument above for this instance of Koszul duality gives
 
 \begin{align*}
 & (\HHom_{R}( \Co\Symm^p(\hbar^{-1}\Co\Lie(A_{[-1]})_{[1-k]} )_{[k]}\hbar,M)^{\bG_m}, \delta +[\omega_A \circ \alpha,-])\\
 &\cong \HHom_{R \oplus A'}(\CoS_{R \oplus A'}^p((\hbar^{-1}\Omega^1_{A'/R})_{[-k]})\hbar,M)^{\bG_m}_{[-k]}, \\
&\simeq \HHom_{R \oplus A}(\CoS_{R \oplus A}^p((\hbar^{-1}\Omega^1_{A/R})_{[-k]})\hbar,M)^{\bG_m}_{[-k]},\\
\end{align*}
where the final line follows because  $A$ is  cofibrant and $A' \to A$ a quasi-isomorphism. When $A$ is unital,  the first factor of $ R \by A \cong R\oplus A$  acts trivially on $\Omega^1_{A/R}$ and on the $A$-module $M$, so we may replace $R \oplus A$ with $A$ throughout.
\end{proof}

% \subsection{Uniqueness of deformations}\label{polyvectorsn}

\begin{proposition}\label{weightprop} 
 If $B$ is a  non-negatively weighted $\bG_m$-equivariant $P_k^{ac}$-algebra over a CDGA $R$   for which the map  $(\cW_1\oL\Omega^1_{B/\cW_0B})\ten_{(R \oplus \cW_0B)}^{\oL}(R \oplus  B) \to \oL\Omega^1_{B/\cW_0B}$ %of commutative cotangent complexes 
 is a quasi-isomorphism, then
\[
 \oR\DDer_{P_k^{ac}\ten R, \bG_m}(B, M) \simeq 0
 \]
 for all Beck $B$-modules $M$ with $\cW_iM \simeq 0$ for all $i<2$.
%% $\cW_ih\DDer_{P_k^{ac},R}(B) \simeq 0$ for $i \le -2$. consistency: $\cW_ih\DDer_{P_k^{ac},R}(B)= h\DDer_{P_k^{ac},R}(B, \hbar^{-i}B)^{\bG_m}$, so $\hbar^{-i}B$ lives in weights $\ge 2$.
\end{proposition}
\begin{proof}
 Without loss of generality, we may assume that $B$ is cofibrant. For  the unital algebra $B':=R \oplus B$, with $R$ in weight $0$, we have  $\Omega^1_{B'}\cong \Omega^1_B$ and $\Omega^1_{\cW_0B'}\cong \Omega^1_{\cW_0B}$, hence  an exact triangle
\[
\Omega^1_{\cW_0B}\ten_{\cW_0B'}B' \to \Omega^1_{B} \to \Omega^1_{B/\cW_0B} \to\Omega^1_{\cW_0B}\ten_{\cW_0B'}B'_{[-1]},
\]
which by hypothesis simplifies to
\[
 \Omega^1_{\cW_0B}\ten_{\cW_0B'}B' \to \Omega^1_{B} \to (\cW_1\Omega^1_{B/\cW_0B})\ten_{\cW_0B'}B' \to\Omega^1_{\cW_0B}\ten_{\cW_0B'}B'_{[-1]}.
\]

Since $\Omega^1_{B}$ is quasi-isomorphic to a $B$-module (equivalently, $B'$-module) 
freely generated   by terms of  weights $0,1$,  we thus have that 
$\oR\HHom_{B'}(\CoS_{B'}^p((\Omega^1_{B/R})_{[-k]}),N)^{\bG_m}$ is acyclic  for any Beck module $N$ with weights $> p$, or more generally with $\cW_iN$ acyclic  for $i \le p$. Setting $N:=\hbar^{p-1}M$,  the statement now follows from the convergent spectral sequence %%I've phrased $\CoS$ with $B'$ to be on the safe side
\[
 \Ext^j_{B'}(\CoS_{B'}^p((\Omega^1_{B/R})_{[-k]}),\hbar^{p-1}M)^{\bG_m} \abuts \H_{k-j}\oR\DDer_{P_k^{ac}\ten R, \bG_m}(B, M)
\]
arising from the complete filtration of Lemma \ref{polyvectorlemma}.
\end{proof}

\begin{corollary}\label{fildefcor1} %% new, replaces {fildefthm1}
 The associated graded functor from the $\infty$-category of quasi-involutive a.c. $P_k$-algebras to $\bG_m$-equivariant $P_k^{ac}$-algebras becomes an equivalence when restricted to those objects $B$ with $W_{-1}B=0$ for which the map  
 \[
 (\gr^W_1\oL\Omega^1_{B/W_0B})\ten_{(R \oplus W_0B)}^{\oL}(R \oplus \gr^WB) \to \oL\Omega^1_{\gr^WB/W_0B} 
 \]
of commutative cotangent complexes is a quasi-isomorphism.

The quasi-inverse functor is given by sending a $\bG_m$-equivariant $P_k^{ac}$-algebra to the underlying filtered algebra.
  \end{corollary}
\begin{proof}
 Substitute Proposition \ref{weightprop} into Corollary \ref{defPcor}.
\end{proof}

%%cut 4 here

\begin{remark}\label{noninvolutivermk}
 If we were to consider non-involutive deformations instead, then the analogue of Corollary \ref{fildefcor1} would not hold. The non-involutive analogue of Corollary \ref{defPcor} involves homology of  $\oR\DDer_{\cP, \bG_m}(\gr^W B, M)$ for $M$ concentrated in weights $\ge 1$, giving a first term which is seldom acyclic. This is similar to phenomena arising in \cite{DQvanish, DQnonneg,DQLag}, where the only obstruction to quantisation is first-order, and can be eliminated by restricting to involutive quantisations.
\end{remark}

\begin{remark}\label{filteredoperadrmk2} %%note that this doesn't seem to generalise to the stacky case, since there we split up the operad.
Following Remark \ref{filteredoperadrmk1},  %% edited, since now we don't have deformation statements.
if we take an involutively filtered dg operad $(\cP,W)$ with an involutive equivalence $\gr^W\cP \simeq P_k^{ac}$, then the conditions of Corollary \ref{fildefcor1} also ensure (via the argument of \cite{KS,hinichDefsHtpyAlg}) that for $B$ a non-negatively weighted $\bG_m$-equivariant $P_k^{ac}$-algebra concentrated in non-negative homological degrees, the space of quasi-involutive a.c. derived $\cP$-algebras $(B',W)$  with $\gr^WB'\simeq B$ is contractible. 

In particular, when $k=1$ we can take $\cP$ to be the $BD_1$ operad, given by the PBW filtration on the associative operad
as in  \cite[\S 3.5.1]{CPTVV} (named by analogy with the Beilinson--Drinfeld ($BD$ or $BD_0$) algebras of \cite[\S 2.2]{CostelloGwilliamVol1}). 
The argument above then gives an essentially unique filtered associative dg algebra $(B',W)$ equipped with a filtered involution $(B')^{\op} \cong B'$ and an equivalence $\gr^WB'\simeq B$. When $B$ is an algebra of polyvectors, $B'$ will thus be given by the ring of differential operators $\sD(\omega^{\half})$ (in the sense of \cite[\S 2.2]{ginzburgLectDmods}) on a square root $\omega^{\half}$ of the dualising bundle whenever this exists, and $B'$ gives a ring of twisted differential operators generalising $\sD(\omega^{\half})$ even when the dualising complex is not 
a line bundle, or has no square root. 
\end{remark}

\section{Quantisations on derived Deligne--Mumford stacks} 

%%! cut 3 here 

\subsection{Hochschild complexes}\label{HHsn}

% We now recall some constructions used in \cite{DQLag}.  We say that a complete filtered  DGAA $(A,F)$ is almost commutative if $\gr_FA$ is a CDGA. The following are abbreviated and specialised versions of \cite[Definitions \ref{DQLag-bardef}, \ref{DQLag-betadef} and \ref{DQLag-HHdef0}]{DQLag}:

\begin{definition}\label{HHdef0}
 For a  cofibrant CDGA $A$ over $R$, we define the filtered  chain complex
\[
 (\CCC_{R,\oplus}(A)_{\bt}, \tau^{\HH})
\]
 to be the direct sum  total complex (written as a chain complex) of the double complex $\uline{\CCC}^{\bt}_R(A)$ given by  
\[
 \uline{\CCC}^n_R(A)= \HHom_R(A^{\ten_R n}, A),
\]
with Hochschild differential $b \co \uline{\CCC}^{n-1} \to \uline{\CCC}^{n}$ given by
\begin{align*}
 (b f)(a_1, \ldots , a_n) = &a_1 f(a_2, \ldots, a_n)\\
 &+ \sum_{i=1}^{n-1}(-1)^i f(a_1, \ldots, a_{i-1}, a_ia_{i+1}, a_{i+2}, \ldots, a_n)\\
&+ (-1)^n f(a_1, \ldots, a_{n-1})a_n.
\end{align*} 

The filtration $\tau^{\HH}$   
on $\uline{\CCC}^{\bt}_R(A)$ and $\CCC_{R,\oplus}(A)_{\bt}$ is given by good truncation in the Hochschild direction, so $\tau^{\HH}_p \CCC_{R,\oplus}(A)_{\bt} \subset \CCC_{R,\oplus}(A)_{\bt}$
is the subspace
\[
 \prod_{i=0}^{p-1} \HHom( A^{\ten_R i},A)_{[i]} \by \ker(b \co \HHom( A^{\ten_R p},A) \to \HHom( A^{\ten_R (p+1)},A) )_{[p]}.
\]
\end{definition}

\begin{remarks}
 Beware that $\CCC_{R,\oplus}(A)_{\bt}$ is  usually a proper subcomplex of the cohomological Hochschild complex $\CCC_{R,\Pi }(A)_{\bt}$ of $A$, which is defined by taking the product, rather than sum, total complex. Since $\CCC_{R,\oplus}(A)_{\bt}=\bigcup_p  \tau^{\HH}_p\CCC_{R,\Pi }(A)_{\bt}$, our constructions will be consistent with those of \cite{DQnonneg}.
 
There is a 
% quasi-isomorphic 
filtration $\gamma$ from \cite[Definition \ref{DQLag-HHdef0}]{DQLag}
which is quasi-isomorphic to $\tau^{\HH}$ under the cofibrancy hypothesis of Definition \ref{HHdef0}, but which also  behaves well whenever $A$ is cofibrant as an $R$-module (rather than as an $R$-CDGA) and gives an involutive, rather than quasi-involutive, filtration analogous to Lemma \ref{involutiveHH}. For simplicity, we will just use $\tau^{\HH}$. 
\end{remarks}

Recall that a brace algebra $B$ is a chain  complex  equipped with a cup product in the form of
a chain  map
\[
 B\ten B \xra{\smile} B,
\]
 and braces in the form of  maps
\[
 \{-\}\{-,\ldots,-\}_r \co B \ten B^{\ten r}\to B_{[r]}
\]
satisfying the conditions of \cite[\S 3.2]{voronovHtpyGerstenhaber}  (where brace algebras are called homotopy $G$-algebras) with respect to the  differential. The commutator of the brace $\{-\}\{-\}_1$ is a Lie bracket, so for any brace algebra $B$, there is a natural DGLA structure on $B_{[-1]}$.

There is a natural brace algebra structure on $\CCC_{R,\oplus}(A)$ by \cite[Theorem 3.1]{voronovHtpyGerstenhaber}.

The following are taken from \cite[\S \ref{DQLag-bracesn}]{DQLag}:
\begin{definition}\label{braceopdef}
 Given a brace algebra $B$, define the opposite brace algebra $B^{\op}$ to have the same elements as $B$, but multiplication $b^{\op}\smile c^{\op} := (-1)^{\deg b\deg c} (c\smile b)^{\op}$ and brace operations
given by the multiplication $(\b B^{\op}) \ten (\b B^{\op})\to \b B^{\op}$ on the bar construction induced by the isomorphism $(\b B^{\op})\cong (\b B)^{\op}$, the opposite coalgebra. Explicitly,
\[
 \{b^{\op}\}\{c_1^{\op}, \ldots, c_m^{\op}\}:=  \pm\{b\}\{c_m, \ldots, c_1\}^{\op},
\]
where $\pm= (-1)^{m(m+1)/2 + (\deg b-m)(\sum_i \deg c_i -m) +  \sum_{i<j}\deg c_i\deg c_j}$.
\end{definition}
Observe that when a filtered brace algebra $B$ is almost commutative, then so is $B^{\op}$.

\begin{definition}\label{acbracedef}
 Define a filtration $\tau$ on the brace operad $\Br$ of \cite{voronovHtpyGerstenhaber} by good truncation aritywise, so $(\tau_{-p}\Br)(n):= \tau_{\ge p}
 \Br(n)$  

We refer to (brace, $\tau$)-algebras in filtered complexes as almost commutative brace algebras.
We define a  quasi-involutive a.c. brace algebra to be an almost commutative brace algebra $(B,F)$ equipped with an involution $(B,F)\cong (B^{\op},F) $ of  (brace, $\tau$)-algebras which acts on $\H_*(\gr^F_iB)$ as multiplication by $(-1)^i$. 
\end{definition}

\begin{lemma}\label{acHHlemma}
 The filtered Hochschild complex $(\CCC_{R,\oplus}(A)_{\bt}, \tau^{\HH})$ naturally has the structure of an almost commutative brace algebra. 
\end{lemma}
\begin{proof}
The double complex  $\uline{\CCC}^{\bt}_R(A)$ is a brace algebra in the cochain direction, as a consequence of \cite[Theorem 3.1]{voronovHtpyGerstenhaber}. Compatibility of tensor products with good truncation then makes $(\uline{\CCC}^{\bt}_R(A), \tau^{\HH})$ a $(\Br,\tau)$-algebra in double complexes, and passing to total complexes gives the required result. 
\end{proof}

\begin{definition}\label{poldef} 
Define $\Pol(A/R,0)$ to be the $\bG_m$-equivariant $P_2^{ac}$-algebra $\bigoplus_{p \ge 0}  \HHom_A(\Omega^p_{A/R} ,A)_{[p]}$ with the   obvious graded-commutative multiplication and the Schouten--Nijenhuis Lie bracket, given by interpreting $\HHom_A(\Omega^p_{A/R} ,A) $ as the complex of antisymmetric $p$-derivations $A^p \to A$.
 \end{definition}

\begin{lemma}\label{grtaulemmaHH}
For a cofibrant $R$-CDGA $A$, the HKR isomorphism gives a quasi-isomorphism
 $\mathrm{HKR} \co  \gr^{\tau^{\HH}}\CCC_{R,\oplus}(A)_{\bt} \to \Pol(A/R,0)$ 
 of  $\bG_m$-equivariant $\gr^{\tau}\Br$-algebras, where the $\gr^{\tau}\Br$-algebra structure on  $\Pol(A/R,0)$ comes from the quasi-isomorphism $\gr^{\tau}\Br \to H_*\Br \cong P_2 $ of dg operads from \cite{McClureSmithDeligneConj}. 
 \end{lemma}
 \begin{proof}
 By turning the structural differential $\delta$ on $A$ off and on again, the HKR isomorphism gives us a quasi-isomorphism $(\HH^p_R(A_{\#}),\delta) \simeq \HHom_A(\Omega^p_A,A)$, since $A$ is cofibrant\footnote{In fact, it suffices for  $A$ to be quasi-smooth in the sense of \cite{Kon} (not to be confused with the clashing sense of \cite{toenseattle}) or even ind-quasi-smooth, i.e. for $A_0$ to be ind-smooth and the graded algebra $A_{\#}$ to be freely, or even projectively, generated over it. The cofibrant hypothesis on $A$ can be accordingly relaxed throughout this paper.}.
Thus 
$\gr_p^{\tau^{\HH}} \uline{\CCC}^{\bt}_R(A)$ maps  quasi-isomorphically to $\HHom_A(\Omega^p_A,A)^{[-p]}$.

Now, the action of $\gr^{\tau}\Br$ on $\HH^*_R(A_{\#})$ factors through its quotient $\H_*\Br \cong P_2$. This operad is generated by the commutative product and Lie bracket, and a simple check shows that the cup product and Gerstenhaber bracket on Hochschild cohomology act in the prescribed fashion under the HKR isomorphism. 
 \end{proof}

%As in \cite[Lemma \ref{DQLag-involutiveHH}]{DQLag} and \cite[Lemma \ref{DQnonneg-involutiveHH}]{DQnonneg}, we have:
\begin{lemma}\label{involutiveHH} 
Given a cofibrant  CDGA $A$ over $R$, there is an anti-involution 
\[
 -i \co \CCC_{R,\oplus}(A)^{\op} \to \CCC_{R,\oplus}(A)
\]
given by
\[
- i(f)(a_1, \ldots, a_m) = (-1)^{\sum_{i<j}  \deg a_i \deg a_j} (-1)^{m(m+1)/2}f(a_m, \ldots , a_1),
\]
 making $(\CCC_{R,\oplus}(A),\tau^{\HH})$ into  a quasi-involutive 
 a.c. brace algebra.
\end{lemma}
\begin{proof}
The (brace, $\tau$)-algebra structure is given by Lemma \ref{acHHlemma}.
The involution is given in  \cite[\S 2.1]{braunInvolutive}, and comes from the antipode on the cofree tensor coalgebra $\bigoplus (A_{[-1]})^{\ten n}$ regarded as the universal enveloping coalgebra of a cofree graded Lie coalgebra. Quasi-involutivity is then a simple calculation as in \cite[Lemma \ref{DQnonneg-involutiveHH}]{DQnonneg}, and compatibility with the brace structure follows as in  \cite[Remark \ref{DQnonneg-oddcoeffsrmk}]{DQnonneg}.
\end{proof}

\subsection{Polydifferential operators}

There is a variant of the Hochschild complex defined using polydifferential operators instead of $R$-linear maps. In the algebraic setting it is quasi-isomorphic to the Hochschild complex, so leads to the same theory, but in smooth and analytic settings it gives a much better behaved object.

%% Have added these definitions and lemmata

The following adapts \cite{CarchediRoytenbergHomological} as in \cite{DStein,DQDG}:
\begin{definition}\label{CinftyEFCdef}
Define a  $\C^{\infty}$-DGA (over a base $R=\R$), resp. an EFC-DGA  (over a base $R=K$, a complete valued field), to be an $R$-CDGA for which:
\begin{itemize}
 \item $A_0$ is equipped with a $\C^{\infty}$-ring structure, resp. entire functional calculus, enhancing its commutative $R$-algebra structure, and
 \item the derivation $\delta \co A_0 \to A_{-1}$ is a $\C^{\infty}$-derivation, resp. EFC-derivation.
\end{itemize}
A morphism $A \to B$ of $\C^{\infty}$-DGAs, resp. EFC-DGAs, is then a morphism of $R$-CDGAs preserving the $\C^{\infty}$ (resp. EFC) structure.
\end{definition}
A $\C^{\infty}$ (resp. EFC) structure on $A_0$ is  a product-preserving set-valued  functor $\R^n \mapsto (A_0)^n$ (resp. $K^n \mapsto (A_0)^n$) on the category with objects $\{\R^n\}_{n \ge 0}$ (resp. $\{K^n\}_{n \ge 0} $) and morphisms consisting of $\C^{\infty}$ (resp. $K$-analytic) maps. Explicitly, the set $A_0$ is equipped, for every $\C^{\infty}$ (resp. analytic) function $f$ in $n$ variables  with an operation $\Phi_f \co (A_0)^n \to A_0$. The condition on $\delta \co A_0 \to A_{-1}$ then says that the map $(\id, \delta) \co A_0\to A_0 \by A_{-1}$ is a $\C^{\infty}$ (resp. EFC) morphism.

As in  \cite[Theorem 6.10]{CarchediRoytenbergHomological}, the corresponding categories of algebras carry cofibrantly generated model structures  obtained by adjunction from the model category of chain complexes, so weak equivalences are quasi-isomorphisms and fibrations are surjections.  For most applications (though not \S \ref{intBCsn}), it suffices to know that examples of cofibrant $\C^{\infty}$-DGAs and EFC-DGAs $A$ are given by CDGAs $A=A_{\ge 0}$ which are freely generated as graded-commutative $A_0$-algebras, with $A_0$ respectively a ring of smooth functions on $\R^n$ or a ring of analytic functions on $K^n$.\footnote{In the remainder of the paper, cofibrancy is only used to give the correct cotangent complexes, so the condition on $A$ can be relaxed by asking only that $A_0$ consist respectively of functions on a smooth manifold or 
holomorphic functions on  a Stein manifold, and  that $A=A_{\ge 0}$ be projectively generated as a graded-commutative $A_0$-algebra.} Morphisms are CDGA morphisms with additional restrictions in degree $0$ corresponding to $\C^{\infty}$ (resp. holomorphic) morphisms between manifolds. 

Both $\C^{\infty}$-DGAs and EFC-DGAs similarly have notions of derivations which are more restrictive than those of the underlying CDGAs, with resulting modules of differential forms; for rings of functions on manifolds, these correspond respectively to smooth and analytic forms on the manifold. This leads to a notion of differential operators, which have the expected form in terms of co-ordinates as in \cite[Definition \ref{DQDG-diffopsdef}]{DQDG}; for intrinsic definitions, see \cite[Remark \ref{DQDG-diffopsrmk}]{DQDG}.

\begin{definition}\label{Dpolydef0}
 For a  cofibrant $R$-CDGA (resp. $\C^{\infty}$-DGA, resp. EFC-DGA) $A$, we define the filtered  chain complex
\[
 (D^{\poly}_{\oplus}(A)_{\bt}, \tau^{\HH})
\]
of polydifferential operators as follows.
First let 
\[
 \uline{D}^{\poly,n}(A):= \cDiff(A^{\coprod n}, A), 
\]
the complex of $R$-linear algebraic (resp. $\C^{\infty}$, resp. EFC) differential operators from the $n$-fold coproduct  $A^{\coprod n} $ (i.e. $A^{\ten_R n}$ in the algebraic case) to the $A^{\coprod n} $-module  $A$. This carries a Hochschild differential $b \co \uline{D}^{\poly,n-1} \to \uline{D}^{\poly,n}$ given by the same formula as in Definition \ref{HHdef0}, leading to a  double complex $\uline{D}^{\poly,\bt}(A)$. We then define  $D^{\poly}_{\oplus}(A)_{\bt}$  to be the direct sum  total complex of  $\uline{D}^{\poly,\bt}(A)$. 

The filtration $\tau^{\HH}$   
on $\uline{D}^{\poly,\bt}(A)$ and $\cD_{\oplus}^{\poly}(A)_{\bt}$ is given by good truncation in the Hochschild direction.
\end{definition}

The subcomplex $D^{\poly}_{\oplus}(A)_{\bt} \subset  \CCC_{R,\oplus}(A)$ is clearly closed under the brace operations, where $R$ is taken to be $\R$ (resp. $K$) in the $\C^{\infty}$ (resp. EFC) setting, so $D^{\poly}_{\oplus}(A)_{\bt}$ is a brace algebra over $R$.

\begin{definition}\label{poldefEFC} 
Given a $\C^{\infty}$-DGA $A$,  resp. EFC-DGA $A$ over $K$,  define $\Pol(A,0)$ %$\Pol^{\C^{\infty}}(A,0)$, resp. $\Pol^{\EFC}(A/K,0)$, 
to be the $\bG_m$-equivariant $P_2^{ac}$-algebra of $\C^{\infty}$, resp. EFC, polyvectors of $A$, defined by replacing the $\Omega^1_{A/R}$ in Definition \ref{poldef} with the complexes $\Omega^1_{A, \C^{\infty}}$ and  $\Omega^1_{A/K, \EFC}$ of $\C^{\infty}$ and EFC differentials, respectively.
\end{definition}

\begin{lemma}\label{grtaulemmaDpoly}
For any cofibrant $R$-CDGA (resp. $\C^{\infty}$-DGA, resp. EFC-DGA) $A$, we have a natural quasi-isomorphism
 $\mathrm{HKR} \co \gr^{\tau^{\HH}}D^{\poly}_{\oplus}(A)_{\bt} \to \Pol(A/R,0)$ 
 of  $\bG_m$-equivariant $\gr^{\tau}\Br$-algebras.
 \end{lemma}
 \begin{proof}
 Observe that for the order filtration $F$ on differential operators, we have $\gr^F_j \uline{D}^{\poly,n}(A)\cong \HHom_{A}(\Symm^j_A((\Omega^1_A)^{\oplus n}),A)$. Thus $\gr^F\uline{D}^{\poly,n}(A)$ is the $A$-linear dual of the free  $A$-algebra generated by $(\Omega^1_A)^{\oplus n}$, and the Hochschild differential on these copies of $\Omega^1_A$ is easily seen to act as the total differential of the simplicial nerve of $\Omega^1_A$. Thus the Eilenberg--Zilber theorem gives quasi-isomorphisms
 \[
  \gr^F_pD^{\poly}_{\oplus}(A)_{\bt} \to  \HHom_A(\Symm^p_A((\Omega^1_A)_{[-1]}),A) \cong \HHom_A(\Omega^p_A,A)_{[p]}. 
 \]
These are compatible with $\tau^{\HH}$, so the restriction to $D^{\poly}_{\oplus}(A)_{\bt}$ of the HKR map is  a quasi-isomorphism.
The rest of the proof follows as in Lemma \ref{grtaulemmaHH}.
 \end{proof}

\begin{lemma}\label{involutiveDpoly} 
Given a cofibrant $R$-CDGA (resp. $\C^{\infty}$-DGA, resp. EFC-DGA) $A$, there is an anti-involution 
\[
 -i \co D^{\poly}_{\oplus}(A)_{\bt}^{\op} \to D^{\poly}_{\oplus}(A)_{\bt}
\]
 making $(D^{\poly}_{\oplus}(A)_{\bt}^{\op}(A),\tau^{\HH})$ into  a quasi-involutive 
 a.c. brace algebra.
\end{lemma}
\begin{proof}
 Most of the properties follow immediately from Lemma \ref{involutiveHH}, since $D^{\poly}_{\oplus}(A)_{\bt} \subset  \CCC_{R,\oplus}(A)$ is a brace subalgebra. It only remains to show that the involution acts as $(-1)^j$ on $\H_*\gr^{\tau^{\HH}}_jD^{\poly}_{\oplus}(A)_{\bt}^{\op}$. By Lemma \ref{grtaulemmaDpoly}, these groups are $\H_{*+j}\HHom_A(\Omega^j_A),A)$, on which the induced involution acts as 
 $(-1)^j$ by \cite[Lemma \ref{DQnonneg-involutiveHH}]{DQnonneg}, giving
quasi-involutivity.
\end{proof}

\subsection{Involutions from the Grothendieck--Teichm\"uller group}\label{GTsn} 

In order to exploit the structure provided by the anti-involutions on brace algebras,  we need a formality isomorphism for the brace operad  which intertwines the functors on brace algebras and Gerstenhaber algebras sending algebras to their opposites. Drinfeld associators which are even are known to provide  equivalences for Lie bialgebras satisfying an analogous property \cite{EnriquezHalbout}, and we now deduce similar results for brace algebras. 

Heuristically, the idea is that the involution $\op_{\Br}$ of the brace operad which sends a brace algebra to its opposite corresponds under Dunn additivity to the involution $\id \ast \op_{E_1}$  of $E_2 \simeq E_1 \ast E_1$ which sends $E_1$-algebras in the second factor to their opposites. The group space of homotopy automorphisms of $E_2$ is homotopy equivalent to the orthogonal group $\mathrm{O}(2,\R)$ acting on discs in the obvious way, with both this involution and Drinfeld's involution corresponding to reflections, and thus being homotopic. Since the interaction of Dunn additivity with solutions of the Deligne conjecture is not fully set out, we will establish equivalence by a  less direct argument.

By \cite{drinfeldQuasiTriangular,barnatan} as interpreted in \cite{fresseOperadsGT2}, the Grothendieck--Teichm\"uller group $\GT(\Q)$, defined as certain automorphisms of the pro-unipotent completion of the free group $F_2$ on two generators, acts naturally on a model for the operad  $\CC_{\bt}(E_2,\Q)$ of chains of $E_2$ as an operad in cocommutative dg coalgebras (equivalently, as a   dg Hopf operad). $\GT$ is a pro-algebraic group with reductive quotient $\bG_m$.

\begin{definition}
 As in \cite[\S 4.1]{barnatan}, define $P \in \GT(\Q)$ to be the  automorphism sending each generator of $F_2$  to its inverse. This lies over $-1 \in \bG_m$ and by \cite[Proposition 4.1]{drinfeldQuasiTriangular} is the unique non-trivial object-preserving automorphism of the operad (in groupoids) of parenthesised braids.
\end{definition}

\begin{lemma}\label{GTinvlemma} %%%completely replaced
 The brace operad $\Br$ equipped with its involution $\op_{\Br}$ is naturally quasi-isomorphic as a $C_2$-equivariant $\Q$-linear dg operad to the operad $\CC_{\bt}(E_2,\Q)$ of chains of the little discs operad equipped with its involution $P \in \GT$. 
\end{lemma}
\begin{proof}
 By \cite[Theorem 1.1]{McClureSmithDeligneConj}, the brace operad $\Br$ is naturally  $\Q$-linearly quasi-isomorphic to $\CC_{\bt}(E_2,\Q)$. As discussed in \cite[\S\S 3.2 and 4.5]{kaufmannSpinelessCactiDeligne}, there are models $E_2'$ for $E_2$ for which the involution $\op_{\Br}$ is induced by an involution $t$. In other words, there exist  a topological operad $E_2'$, a quasi-isomorphism $\CC_{\bt}(E_2',\Q) \to \Br$ intertwining $t$ and $\op_{\Br}$, and a zigzag of weak equivalences between $E_2'$ and $E_2$.
 
 Now, by \cite[Theorem 8.5]{horelProfiniteOperad} the group space $\Aut^h(E_2)$ of homotopy automorphisms of $E_2$ is weakly equivalent to $\mathrm{O}(2,\R) \simeq \{\pm 1\} \ltimes B\Z$, with the non-trivial class in $\pi_0\Aut^h(E_2)$ being $[P]$. We know that $t$ cannot be homotopic to the identity because $\op_{\Br}$ acts non-trivially on $\gr^{\tau}\Br \simeq P_2$. Moreover, 
 since the second cohomology group of $C_2$ with coefficients in the non-trivial representation on $\Z$ is zero, it follows that the homotopy fibre of the map $\oR\map_{\Top\Gp}(C_2, \Aut^h(E_2)) \to \Hom_{\Gp}(C_2, \pi_0\Aut^h(E_2))$ over the non-trivial homomorphism $C_2 \to \{\pm 1\}$ is connected (and in fact equivalent to $B\Z$, corresponding to the space of reflections inside $\mathrm{O}(2,\R)$).  Thus after identification via the quasi-isomorphism $\CC_{\bt}(E_2,\Q)\simeq \Br$ above,   $P$ and $t$ must induce homotopic involutions $C_2 \to \Aut^h(E_2)$.  
\end{proof}

\begin{definition}
Denote the pro-unipotent radical of the  pro-algebraic group  $\GT$ (defined over $\Q$) by $\GT^1$. Write $\Levi_{\GT}$ for the set of Levi decompositions $\GT \cong \bG_m \ltimes \GT^1$; equivalently, this is the set of sections of the natural map $\GT \to \bG_m$.
We then define $\Levi_{\GT}^P \subset \Levi_{\GT}$ to be the set of sections $w$ %of $\GT \to \bG_m$ 
satisfying $w(-1)=P$. 
\end{definition}
Taking base change to arbitrary commutative $\Q$-algebras $A$, we can extend this definition  to give sets $\Levi_{\GT}(A), \Levi_{\GT}^P(A)$ of decompositions of the pro-algebraic group  $\GT \by \Spec A$ over $A$.
  
Every Levi decomposition $\GT \cong \GT^1 \rtimes \bG_m$ of $\GT$ automatically equips $\GT^1$ with a grading. Since the natural $\bG_m$-action  on the abelianisation of $\GT^1$ has weight $1$, each such grading splits the lower central series filtration on $\GT^1$, giving an isomorphism between $\GT^1$ and the graded Grothendieck--Teichm\"uller group $\mathrm{GRT}^1$. Thus $\Levi_{\GT}$ is naturally isomorphic to the space of Drinfeld $1$-associators.  The subset  $\Levi_{\GT}^P \subset\Levi_{\GT} $ then corresponds to  $1$-associators which are even in the sense of   \cite[\S 4.1]{barnatan}.
  
\begin{lemma}\label{leviGTlemma}
 The functor  $\Levi_{\GT}^P$ is an affine scheme over $\Q$ equipped with the structure of a trivial torsor for the subgroup scheme $(\GT^1)^P\subset \GT^1$ given by the centraliser  of  $P$.
\end{lemma}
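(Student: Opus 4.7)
My plan is to establish three things in succession: representability of $\Levi_{\GT}^t$ as an affine $\Q$-scheme, the existence of a simply transitive $(\GT^1)^t$-action on it by conjugation, and non-emptiness of $\Levi_{\GT}^t$.

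For representability, the morphism $\pi \co \GT \to \bG_m$ of affine $\Q$-group schemes is split by a Levi section (whose existence is the Levi--Malcev theorem in characteristic $0$). Fixing one such section $w_0$, I identify $\Levi_{\GT}$ with the scheme of non-abelian $1$-cocycles $\bG_m \to \GT^1$, a closed subscheme of the affine scheme of $\bG_m$-equivariant maps. So $\Levi_{\GT}$ is affine, and $\Levi_{\GT}^t$ is cut out by the further closed condition $\mathrm{ev}_{-1}(w) = t$.

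For the torsor structure, the conjugation action $u \cdot w := u\, w(-)\, u^{-1}$ of $\GT^1$ on $\Levi_{\GT}$ restricts to an action of $(\GT^1)^t$ on $\Levi_{\GT}^t$, since $(u\cdot w)(-1) = utu^{-1} = t$ whenever $u$ centralises $t$. Transitivity follows from Levi--Malcev: given $w, w' \in \Levi_{\GT}^t$ with $uwu^{-1} = w'$ for some $u \in \GT^1$, evaluation at $-1$ forces $utu^{-1} = t$, hence $u \in (\GT^1)^t$. For freeness, the stabiliser of $w$ in $\GT^1$ is $Z_{\GT^1}(w(\bG_m))$, which is automatically contained in $(\GT^1)^t$ since $t = w(-1)$; its Lie algebra is the weight-$0$ part of $\Lie(\GT^1)$ for the $\bG_m$-action via $w$, and this vanishes by the positive-weight grading of $\mathfrak{gt}$.

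For non-emptiness, pick any section $w_0$ and let $t_0 = w_0(-1)$, so $t$ and $t_0$ are both involutions of $\GT$ over $-1$. I seek $u \in \GT^1$ with $u t_0 u^{-1} = t$, equivalently $u \cdot \sigma(u)^{-1} = t t_0^{-1}$ in $\GT^1$, where $\sigma = \Ad(t_0)$ is an involution of the pro-unipotent group $\GT^1$. The element $s := t t_0^{-1}$ satisfies $\sigma(s) = s^{-1}$ because both $t$ and $t_0$ are involutions, so $\log s$ lies in the $(-1)$-eigenspace of $\sigma$ on $\Lie(\GT^1)$. This is then a non-abelian Hilbert $90$ statement for $\sigma$, solved layer by layer on the lower central series of $\Lie(\GT^1)$ by inverting $\tfrac{1}{2}(\id - \sigma)$ on the $(-1)$-eigenspace and reassembling via Baker--Campbell--Hausdorff in characteristic zero. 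The main obstacle in this plan is the positive-weight property of $\mathfrak{gt}$ needed for freeness, which I would cite from the standard description of $\mathfrak{gt}$ as a graded Lie subalgebra of the free Lie algebra on two generators in positive degrees.
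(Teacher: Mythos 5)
Your proposal is correct and follows essentially the same route as the paper: transitivity of the conjugation action from the Levi--Malcev theory of pro-algebraic groups, freeness from the fact that $\Lie(\GT^1)$ has no weight-zero part, and non-emptiness of $\Levi_{\GT}^t$ by the square-root trick (your ``non-abelian Hilbert 90'' collapses to the paper's one-step solution $u=s^{1/2}$, since $\sigma(s^{1/2})=s^{-1/2}$ exactly). The only addition is your explicit cocycle argument for affineness, which the paper leaves implicit.
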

\begin{proof}
We expand the argument from \cite[Remark \ref{DQnonneg-oddcoeffsrmk}]{DQnonneg}.
 By the general theory \cite{Levi} of pro-algebraic groups in characteristic $0$, the set $\Levi_{\GT}(\Q)$ is non-empty, and for all commutative $\Q$-algebras $A$, the group $\GT^1(A)$ acts transitively on $\Levi_{\GT}(A)$ via the adjoint action. Because the graded quotients of the lower central series of $\GT^1$ have non-zero weight for the adjoint $\bG_m$-action, the centralisers of this action are trivial and $\Levi_{\GT}(A)$ is a torsor for $\GT^1(A)$.

Now, choose any Levi decomposition $w_0 \in \Levi_{\GT}(\Q)$ and let $w_0(-1)=Pu$ for $u  \in \GT^1(\Q)$. 
Since $P$ and $w_0(-1)$ are both of order $2$, we have $u= \ad_P(u^{-1})$. Writing $u = \exp(v)$ for $v$ in the pro-nilpotent Lie algebra $\g\ft^1$, and setting $u^{\half}:= \exp(\frac{v}{2})$, we have  $u^{\half}=\ad_P(u^{-\half})$, giving $w:=\ad_{u^{\half}}\circ w_0 \in \Levi_{\GT}^P(\Q)$. Thus $\Levi_{\GT}^P$ is non-empty, so $\Levi_{\GT}^P \subset \Levi_{\GT}$ is a torsor for the subgroup $(\GT^1)^P \subset \GT^1$ fixing $P$ under the adjoint action.
\end{proof}

%%have cut here

 \begin{proposition}\label{Brformalprop} 
Every $1$-associator $w \in \Levi_{\GT}$ induces a zigzag $\theta_w$ of %a quasi-isomorphism $\theta_w \co \tilde{P}_2 \to \Br$, unique up to coherent homotopy, from a $\bG_m$-equivariant cofibrant replacement $\tilde{P}_2 \to P_2^{ac}$. 
filtered quasi-isomorphisms between $(\Br,\tau)$ and $(P_2,\tau)$.
If the $1$-associator is even (i.e. $w \in \Levi_{\GT}^P $), then the quasi-isomorphisms are $C_2$-equivariant, preserving the involutions given by $-1 \in \bG_m$ on $P_2^{ac}$  and  by Definition \ref{braceopdef} on $\Br$.

The quasi-isomorphism $\theta_w$ is compatible with the natural maps $s^{-1}\Lie \to \P_2$ and $s^{-1}\Lie \to \Br$ from the shifted Lie operad $s^{-1}\Lie$.
 \end{proposition}
\begin{proof}
As in Lemma \ref{GTinvlemma}, we have a zigzag of $C_2$-equivariant  quasi-isomorphisms between $(\Br,\op_{\Br})$ and $(\CC_{\bt}(B\mathrm{PaB}, \Q),P)$, the dg operad of chains on the operad of parenthesised braids, equipped with their respective involutions.

As explained succinctly in \cite{petersenGTformality}, formality of the operad $\CC_{\bt}(B\mathrm{PaB}, \Q)$ is a consequence of its $\GT$-action and the  observation that the   Grothendieck--Teichm\"uller group $\GT$ is a pro-unipotent extension of $\bG_m$.  
The pro-unipotent radical $\GT^1 =\ker(\GT \to \bG_m)$ acts trivially on homology $\H_*(B\mathrm{PaB},\Q)  \cong P_2$, inducing a $\bG_m$-action on $P_2$. 

Since the $\bG_m$-action  has weight $0$  on $\H_0\Br(2)$ and  weight $-1$ on $\H_1\Br(2)$, and since these generate the whole operad, it follows that the $\bG_m$-action on $\H_j\Br(k)$ has weight $-j$ for all $j,k$. Thus we have
a $\bG_m$-equivariant isomorphism
\[
 \H_*(B\mathrm{PaB}, \Q)\cong P_2^{ac} 
\]
with the $\bG_m$-equivariant operad $P_2^{ac}=  \Com \circ s^{-1}\hbar^{-1}\Lie$ of Definition \ref{involutivePacdef}. The element $P \in \GT(\Q)$ lies over $-1 \in \bG_m(\Q)$, so the involution  $P$ %%%was $t$
on $\Br$ induces the action  of $-1 \in \bG_m$ on  $P_2^{ac}$ under the isomorphism above.

Any Levi decomposition $w \co \bG_m \to \GT$ gives a $\bG_m$-action on $\CC_{\bt}(B\mathrm{PaB}, \Q)$, i.e. a weight decomposition. As in \cite{petersenGTformality}, the  weight decomposition associated to $w$ thus induces a zigzag of $\bG_m$-equivariant quasi-isomorphisms between $\CC_{\bt}(B\mathrm{PaB}, \Q)$ and $P_2^{ac}$. The involution $-1 \in \bG_m$ of the latter then necessarily corresponds to the involution $w(-1) \in \GT$ of the former. 
 
These quasi-isomorphisms of operads combine to give $\theta_w$, and the quasi-isomorphisms respect the involutions when $w(-1)=P$.
The  good truncation  filtration $\tau$ of Definition \ref{acbracedef} is defined similarly on any dg operad in place of $\Br$, and in particular there is a good truncation  on the quasi-isomorphic $\Q$-linear dg  operads $\CC_{\bt}(B\mathrm{PaB}, \Q)$ and  $\CC_{\bt}(E_2,\Q)$ of chains on the topological operad $E_2$.\footnote{The latter is presumably what is meant by the filtration given by the Postnikov tower in \cite[\S 3.5.1]{CPTVV}, since the Postnikov tower itself consists of quotients rather than subspaces, with kernels given by good truncation.} Any chain map automatically preserves the good truncation filtration,  so $\theta_w$ is a filtered quasi-isomorphism.

Finally, the natural morphism from the Lie operad to  $\Br$  is given in each arity by inclusion of the top weight term for the decreasing filtration, %(or most negative weight if written as an increasing filtration), 
 i.e. 
 \[
 \{\Br(n)\}_n \la \{\tau_{\ge n-1}\Br(n)\}_n \xra{\sim} \{\H_{n-1}\Br(n)_{[1-n]}\}_n \cong \{\Lie(n)_{[1-n]}\}_n= \{s^{-1}\Lie(n)\}_n.
 \]
 The same construction applied to each dg operad in the zigzag ensures that our map $(P_2, \tau) \to (\Br,\tau)$ in the homotopy category of filtered dg operads (or $C_2$-equivariant filtered dg operads for $w$ even) respects the natural maps from the shifted Lie operad on each side.
 \end{proof}

\begin{definition}\label{pwinvdef}
 Given a  Levi decomposition  $w \in \Levi_{\GT}(\Q)$ of the pro-algebraic group $\GT$ over $\Q$, we denote by $p_w$ the $\infty$-functor  from almost commutative brace algebras to almost commutative $P_2$-algebras coming from  the map  $ \theta_w \co (P_2, \tau)\to (\Br,\tau) $. This preserves the underlying filtered $L_{\infty}$-algebras up to equivalence.
 
 When $w \in \Levi_{\GT}^P(\Q)$, this induces an $\infty$-functor from  quasi-involutive a.c. brace algebras to quasi-involutive a.c. $P_2$-algebras, which we also denote by $p_w$.
 \end{definition}

Explicitly, Proposition \ref{Brformalprop} gives us a filtered quasi-isomorphism $\tilde{\theta}_w \co \tilde{P}_2^{ac} \to (\Br, \tau)$ from any  $\bG_m$-equivariant cofibrant replacement $\tilde{P}_2^{ac}$ of $\tilde{P}_2$, and $\tilde{\theta}_w$ is involutive when $w$ is even. This automatically gives any  involutive a.c. brace algebra the structure of an involutive a.c.
$\tilde{P}_2^{ac}$-algebra, and we can then obtain $p_w$ by applying the derived left adjoint of the forgetful functor from  $\Rees^{-1 \in \bG_m}_W(P_2^{ac})$-algebras to $\Rees^{-1 \in \bG_m}_W(\tilde{P}_2^{ac})$-algebras induced by the morphism $\tilde{P}_2^{ac} \to P_2^{ac}$.

\subsection{Existence of deformation quantisations}

%% Theorem \ref{fildefthm1} has been replaced with {fildefcor1}:  the forgetful functor from ac algebras to graded algebras is an equivalence when we restrict to objects freely gend by weights $0,1$ and homotopy formally \'etale morphisms on the base.

\subsubsection{Formality}

\begin{lemma}\label{Polcotlemma}
Take a cofibrant $R$-CDGA (resp. $\C^{\infty}$-DGA, resp. $K$-EFC-DGA) $A$  with  perfect cotangent complex $\Omega^1_{A/R}$ (resp. $\Omega^1_{A, \C^{\infty}}$, resp.  $\Omega^1_{A/K, \EFC}$). For the non-negatively weighted  $\bG_m$-equivariant $P_2^{ac}$-algebra $\Pol(A,0)$ %(resp. $\Pol^{\C^{\infty}}(A,0)$, resp. $\Pol^{\EFC}(A/K,0)$) 
of polyvectors (of Definition \ref{poldef}, resp. \ref{poldefEFC}), the map
 \[
 (\cW_1\oL\Omega^1_{\Pol(A,0) /A})\ten_{A}^{\oL}\Pol(A,0)  \to \oL\Omega^1_{\Pol(A,0)/A} 
 \]
of commutative cotangent complexes %%even in $\C^{\infty}$ and EFC cases
is a quasi-isomorphism.
\end{lemma}
\begin{proof}
Write $\Omega^p_{A,\boxempty}$ for $\Omega^p_{A/R} $ (resp.  $\Omega^p_{A, \C^{\infty}}$, resp.  $\Omega^p_{A/K, \EFC}$). Since $\Omega^1_{A,\boxempty}$ is perfect, we have a $\bG_m$-equivariant  quasi-isomorphism 
\begin{align*}
\bigoplus_{p\ge 0} \oL \Symm^p_A(\HHom_{A}(\Omega^1_{A,\boxempty},A)_{[1]})  &\to \bigoplus_{p\ge 0} \HHom_{A}(\Omega^p_{A, \boxempty},A)_{[p]}\\
 \oL\Symm_A(\cW_1\Pol(A,0) )  &\to \Pol(A,0)
\end{align*}
of CDGAs, from which the statement follows immediately. %$\oL\Omega^1_{P/A} \simeq P\ten^{\oL}_AT_{[1]}$ %%map is just $(p,t) \mapsto pdt$.
 \end{proof}
 
\begin{theorem}\label{fildefhochthm1} %%%! completely rewritten
Given a cofibrant $R$-CDGA (resp. $\C^{\infty}$-DGA or  $K$-EFC-DGA)  $A$ with  perfect cotangent complex  $\Omega^1_{A/R}$ (resp. $\Omega^1_{A,\C^{\infty}}$ or $\Omega^1_{A/K,\EFC}$), the quasi-involutively  filtered DGLA underlying the  complex of polydifferential operators  
$
D^{\poly}_{\oplus}(A)_{[-1]}% \tau^{\HH})
$  
with the increasing filtration $\tau^{\HH}$ of Definition \ref{Dpolydef0} is filtered quasi-isomorphic to the graded DGLA  $\Pol(A,0)_{[-1]}$ from Definition \ref{poldef} (resp. Definition  \ref{poldefEFC}).   
%%(resp. $\Pol^{\C^{\infty}}(A,0)_{[-1]}$ or $\Pol^{\EFC}(A/K,0)_{[-1]}$ from Definition  \ref{poldefEFC}). 

This quasi-isomorphism depends only on  a choice of even $1$-associator $w \in \Levi_{\GT}^P$, and  is natural with respect to
all morphisms $(\cD^{\poly}_{\oplus}(A), \tau^{\HH}) \to (\cD^{\poly}_{\oplus}(A'), \tau^{\HH})$ in the $\infty$-category of quasi-involutive a.c. brace algebras.
% 
% the homotopy \'etale functoriality of \cite[\S \ref{DQnonneg-Artindiagsn}]{DQnonneg}, \cite[\S \ref{poisson-descentsn}]{poisson} and its $\C^{\infty}$ and EFC analogues.

When  $A$ is an $R$-CDGA with  perfect cotangent complex, the same statements hold for the Hochschild complex 
$
\CCC_{R,\oplus}(A)%, \tau^{\HH})
$
in place of $D^{\poly}_{\oplus}(A)$.
%%%! that rephrased
\end{theorem}
\begin{proof}
Lemma \ref{involutiveDpoly} shows that $(D^{\poly}_{\oplus}(A), \tau^{\HH})$ is a quasi-involutive a.c. brace algebra. 
%%Writing $P(A)$ for $\Pol(A/R,0)$ (resp. $\Pol^{\C^{\infty}}(A,0)$ or $\Pol^{\EFC}(A/K,0)$), 
We have a quasi-isomorphism $\mathrm{HKR} \co \gr^{\tau^{\HH}}D^{\poly}_{\oplus}(A) \xra{\sim} \Pol(A,0)$ of graded $\gr^{\tau}\Br$-algebras  by Lemma \ref{grtaulemmaDpoly}. 

Lemmas \ref{involutiveHH} and  \ref{grtaulemmaHH} give the corresponding statements for 
$\CCC_{R,\oplus}(A)$ in the algebraic setting; since $D^{\poly}_{\oplus}(A) \to \CCC_{R,\oplus}(A)$ is thus a filtered quasi-isomorphism, it suffices to prove the statements for polydifferential operators in place of the Hochschild complex.

Applying the $\infty$-functor $p_w$ of Definition \ref{pwinvdef} for some even associator $w \in \Levi_{\GT}^P(\Q)$ (or even a point in the space $\Levi_{\GT}^P(R)$) gives a quasi-involutive a.c. $P_2$-algebra $p_wD^{\poly}_{ \oplus}(A)$ together with   a zigzag of quasi-isomorphisms of $\bG_m$-equivariant $P_2^{ac}$-algebras between its
associated graded algebra 
and $\Pol(A,0)$. 

By Lemma \ref{Polcotlemma},  
$\oL\Omega^1_{\Pol(A,0)/A}$ %\simeq \Pol(A,0)\ten^{\oL}_A\cW_1\Pol(A,0)$ true but not particularly helpful
satisfies the  conditions of  Corollary \ref{fildefcor1}, giving an essentially unique equivalence
$
\alpha_{w,A}\co p_wD^{\poly}_{\oplus}(A) \simeq  p_w\gr^{\tau^{\HH}}D^{\poly}_{\oplus}(A)   %%HKR to compose with
$
of involutive a.c. $P_2$-algebras, natural with respect to quasi-involutive a.c. brace algebra morphisms of $D^{\poly}_{\oplus}(A)$. Composition with the HKR quasi-isomorphism then gives us the required equivalence
\[
 \mathrm{HKR} \circ \alpha_{w,A}\co p_wD^{\poly}_{\oplus}(A) \simeq \Pol(A,0). \qedhere
\]
% The constructions of \cite[\S \ref{DQnonneg-Artindiagsn}]{DQnonneg} and  \cite[\S \ref{poisson-descentsn}]{poisson}  give $(\CC_{R,\oplus}(-), \tau^{\HH})$ and $\Pol(-,0)$ as  $\infty$-functors on the  $\infty$-category  $\oL dg\CAlg(R)^{\et}$ of $R$-CDGAs and homotopy \'etale morphisms, with the HKR quasi-isomorphism then being a natural transformation. 
% The same arguments work verbatim for $D^{\poly}_{\oplus}(-)$ on the corresponding $\infty$-categories of $\C^{\infty}$-DGAs and EFC-DGAs. 
% Thus $\mathrm{HKR} \circ \alpha_w$ is a natural equivalence with respect to this homotopy \'etale functoriality. 
\end{proof}

\begin{remark}\label{cfKTrmk}
When applied to polynomial rings in the algebraic setting and to $\C^{\infty}(\R^n)$ in the smooth setting, the statement of Theorem \ref{fildefhochthm1}  recovers \cite[Theorem 4]{KontsevichOperads} and \cite[\S 3]{tamarkinOperadicKontsevichFormality}. For more general smooth  varieties it recovers   \cite[Theorem 1.1]{vdBerghGlobalDQ}. %%can't immediately see a ref for this approach to smooth manifolds
The preliminary steps are the same, but the arguments for eliminating the potential first-order deformation are very different, as we consider anti-involutive deformations while Tamarkin  and Kontsevich looked   at invariance under affine transformations, which do not exist even locally for our more general rings. 

Complete intersection singularities  have perfect cotangent complexes, so Theorem \ref{fildefhochthm1} also promotes Kontsevich's  quasi-isomorphism from \cite[Appendix, Proposition 1]{FronsdalKontsevich} to an $L_{\infty}$ quasi-isomorphism.
\end{remark}

\subsubsection{Affine quantisations}

\begin{definition}\label{mcPLdef}
 Given a   differential graded Lie algebra (DGLA) $L$ with homological grading, define the the Maurer--Cartan set by 
\[
\mc(L):= \{\omega \in  L_{-1}\ \,|\, \delta\omega + \half[\omega,\omega]=0 \in   L_{-2}\}.
\]

Following \cite{hinstack}, define the Maurer--Cartan space $\mmc(L)$ (a simplicial set) of a nilpotent  DGLA $L$ by
\[
 \mmc(L)_n:= \mc(L\ten_{\Q} \Omega(\Delta^n)_{\bt}),
\]
where 
\[
\Omega(\Delta^n)_{\bt}=\Q[t_0, t_1, \ldots, t_n,\delta t_0, \delta t_1, \ldots, \delta t_n ]/(\sum t_i -1, \sum \delta t_i)
\]
is the commutative dg algebra of de Rham polynomial forms on the $n$-simplex, with the $t_i$ of degree $0$ and $\delta t_i$ of chain degree $-1$, so $\Omega(\Delta^n)_{-m}= \Omega^m(\Delta^n)$, the space of $m$-forms.

Given an inverse system $L=\{L_{\alpha}\}_{\alpha}$ of nilpotent DGLAs, define
\[
 \mc(L):= \Lim_{\alpha} \mc(L_{\alpha}) \quad  \mmc(L):= \Lim_{\alpha} \mmc(L_{\alpha}).
\]
Note that  $\mc(L)= \mc(\Lim_{\alpha}L_{\alpha})$, but $\mmc(L)\ne \mmc(\Lim_{\alpha}L_{\alpha}) $. 
\end{definition}

\begin{definition}\label{hatpoldef}
 Given a cofibrant $R$-CDGA $A$ (resp. $\C^{\infty}$-DGA or EFC-DGA), as in \cite[Definition \ref{poisson-poldef}]{poisson} we complete the $P_2$-algebra $\Pol(A,0)$ of Definition \ref{poldef} (resp. Definition \ref{poldefEFC}) to give a $P_2$-algebra 
 \[
  \widehat{\Pol}(A,0):= \prod_{p \ge 0} \cW_p \Pol(A,0) %=  \prod_{p \ge 0}  \HHom_A(\Omega^p_{A} ,A)_{[p]}
 \]
 of polyvectors. This is $\prod_{p \ge 0}  \HHom_A(\Omega^p_{A/R} ,A)_{[p]}$ in the algebraic setting and corresponding expressions using $\C^{\infty}$ or EFC differentials in the other settings.
 
This is not graded, but does have a decreasing filtration 
\[
F^i\widehat{\Pol}(A,0):= \prod_{p \ge i}   \cW_p \Pol(A,0) %\HHom_A(\Omega^p_{A/R} ,A)_{[p]}.
\]

The space $\cP(A,0)$ of $0$-shifted Poisson structures on $A$ is then defined in \cite[Definition \ref{poisson-poissdef}]{poisson}  to be 
\[
\mmc(F^2 \widehat{\Pol}(A,n)_{[-1]});
\]
% for $\mmc$ as in Definition \ref{mcPLdef}; 
this is equivalent to the space of $P_1$-algebras with underlying CDGA  quasi-isomorphic to $A$.

\end{definition}

\begin{definition}\label{qpoldef}
 Given a cofibrant $R$-CDGA, $\C^{\infty}$-DGA or  $K$-EFC-DGA $A$, adapting \cite[Definition \ref{DQnonneg-qpoldef}]{DQnonneg}  as in \cite{DQDG,DStein}, we define the DGLA $ Q\widehat{\Pol}(A,0)_{[-1]}$ of quantised polyvectors by setting
 \[
  Q\widehat{\Pol}(A,0):= \prod_{p \ge 0} \tau^{\HH}_p D^{\poly}_{\oplus}(A)_{\bt}\hbar^{p-1};
 \]
 observe that because the Gerstenhaber bracket satisfies $[\tau^{\HH}_p,\tau^{\HH}_q] \subset \tau^{\HH}_{p+q-1}$, this is indeed a DGLA.
 
 Define a decreasing filtration $\tilde{F}$ on  $Q\widehat{\Pol}(A,0)$  by the subcomplexes 
\begin{align*}
 \tilde{F}^iQ\widehat{\Pol}(A,0)&:= \prod_{j \ge i} \tau^{\HH}_jD^{\poly}_{\oplus}(A)\hbar^{j-1}.
\end{align*}
This filtration is complete and Hausdorff,   with $[\tilde{F}^i,\tilde{F}^j] \subset \tilde{F}^{i+j-1}$.
In particular,  this makes $\tilde{F}^2Q\widehat{\Pol}(A,0)_{[-1]}$  into a pro-nilpotent filtered DGLA.

 The space $Q\cP(A,0)$ of  $0$-shifted quantisations of $A$ is then defined (adapting \cite[Definition \ref{DQnonneg-Qpoissdef}]{DQnonneg}) to be 
\[
\mmc(\tilde{F}^2 Q\widehat{\Pol}(A,n)_{[-1]}).
\]
% for $\mmc$ as in Definition \ref{mcPLdef}. 
The subspace $Q\cP(A,0)^{sd} \subset Q\cP(A,0)$ of self-dual quantisations then consists of fixed points for the involution $(-)^* $ given by $\Delta^*(\hbar):= i(\Delta)(-\hbar)$, for the involution  $i$ of Lemma \ref{involutiveHH}. 
 \end{definition}

 \begin{remark}\label{BD1rmk}
  Since for an $R$-CDGA $A$, the inclusion $ D^{\poly}_{\oplus}(A)_{\bt} \subset \CCC_{R,\oplus}(A)_{\bt}$ is a filtered quasi-isomorphism, replacing polyvectors with the Hochschild complex gives an equivalent construction in the algebraic setting. Similarly the  filtration $\gamma$ from \cite[Definition \ref{DQLag-HHdef0}]{DQLag} is  quasi-isomorphic to $\tau^{\HH}$, so the definitions are also equivalent to those of \cite{DQLag}.
  
  As in \cite[Remark \ref{DQLag-curvedrmk}]{DQLag}, $Q\cP(A,0)$  is thus equivalent to the space of curved $BD_1$-algebras (almost commutative $\hbar$-adic associative algebras) deforming $A$; 
when $A$ lies non-negative chain degrees no curvature is possible on objects,  but curvature still leads to additional  higher morphisms coming from   inner automorphisms. The objects in the $\C^{\infty}$ and EFC settings admit a similar interpretation, but with  additional constraints from the restriction to polydifferential operators.

As in \cite[Remark \ref{DQLag-curvedrmk}]{DQLag}, $Q\cP(A,0)^{sd}$ consists of curved $BD_1$-algebras $\tilde{A}$ deforming $A$ and equipped with an anti-involution $\tilde{A}^{\op} \cong \tilde{A}$ which is semilinear   under the transformation $\hbar \mapsto -\hbar$.
 \end{remark}

 \begin{corollary}\label{affquantcor} %%%! have included smooth and analytic analogues here
 Given a cofibrant $R$-CDGA (resp. $\C^{\infty}$-DGA or  $K$-EFC-DGA)  $A$ with  perfect cotangent complex  $\Omega^1_{A/R}$ (resp. $\Omega^1_{A,\C^{\infty}}$ or $\Omega^1_{A/K,\EFC}$),  the space $Q\cP(A,0)$ of $0$-shifted quantisations of $A$ is  equivalent to the Maurer--Cartan space
\begin{align*}
  &\mmc((A_{[-1]}\hbar \by \HHom_A(\Omega^1_{A, \boxempty} ,A)\hbar \by  \prod_{p \ge 2} \HHom_A(\Omega^p_{A,\boxempty} ,A)_{[p-1]}\hbar^{p-1})\brh)\\ 
  % \mmc((A_{[-1]}\hbar^2 \by \HHom_A(\Omega^1_{A/R} ,A)\hbar \by  \prod_{p \ge 2} \HHom_A(\Omega^p_{A,\boxempty} ,A)_{[p-1]}\brh)
  &\cong 
  \mmc( F^2\widehat{\Pol}(A,0)_{[-1]} \by\hbar F^1\widehat{\Pol}(A,0)_{[-1]} \by \hbar^2\widehat{\Pol}(A,0)_{[-1]}\brh)
 \end{align*}
 where the DGLA structure comes from the Schouten--Nijenhuis bracket, and $\Omega^p_{A,\boxempty}$ is $\Omega^p_{A/R}$ (resp. $\Omega^p_{A,\C^{\infty}}$, resp. $\Omega^p_{A/K,\EFC} $). 
 
The subspace  $Q\cP(A,0)^{sd} \subset Q\cP(A,0)$ of self-dual quantisations is then  equivalent to the Maurer--Cartan space
\begin{align*}
 &\mmc((A_{[-1]}\hbar \by \HHom_A(\Omega^1_{A, \boxempty} ,A)\hbar^2 \by  \prod_{p \ge 2} \HHom_A(\Omega^p_{A,\boxempty} ,A)_{[p-1]}\hbar^{p-1})\brhh)\\ 
 % \mmc((A_{[-1]}\hbar^2 \by \HHom_A(\Omega^1_{A/R} ,A)\hbar^2 \by  \prod_{p \ge 2} \HHom_A(\Omega^p_{A,\boxempty} ,A)_{[p-1]}\brhh)
&\cong \mmc(F^2\widehat{\Pol}(A,0)_{[-1]} \by \hbar^2\widehat{\Pol}(A,0)_{[-1]}\brhh).
 \end{align*}
 
 In particular, there
exist self-dual   associative
 quantisations for every Poisson structure on $A$. 
\end{corollary}
\begin{proof}
 The quasi-isomorphism of  Theorem \ref{fildefhochthm1}  gives us a quasi-isomorphism
 \begin{align*}
  \tilde{F}^2Q\widehat{\Pol}(A,0)_{[-1]} &\simeq \prod_{p \ge 2, i \le p} \cW_i \Pol(A,0)_{[-1]} \hbar^{p-1}= \prod_{i\ge 0}  \hbar^{\max(i-1,1)}\cW_i \Pol(A,0)_{[-1]}\brh %\\
%  &\cong  (A_{[-1]}\hbar \by \HHom_A(\Omega^1_{A/R} ,A)\hbar \by  \prod_{p \ge 2} \HHom_A(\Omega^p_{A/R} ,A)_{[p-1]}\hbar^{p-1})\brh
   \end{align*}
of pro-nilpotent filtered DGLAs, and hence an equivalence of the respective Maurer--Cartan spaces. We then use the  isomorphism 
\begin{align*}
  &F^2\widehat{\Pol}(A,0) \by\hbar F^1\widehat{\Pol}(A,0) \by \hbar^2\widehat{\Pol}(A,0)\brh %\\
  %&
  \xra{\simeq}    \prod_i  \hbar^{\max(i-1,1)}\cW_i \Pol(A,0)\brh
  %A\hbar \by \HHom_A(\Omega^1_{A/R} ,A)_{[1]} \by  \prod_{p \ge 2} \HHom_A(\Omega^p_{A/R} ,A)_{[p]}\hbar^{p-1})
\end{align*}
 given by multiplying %all $\HHom_A(\Omega^p_{A/R} ,A)$ terms by $\hbar^{p-1}$.
$\cW_i\Pol(A,0)$ by $\hbar^{i-1}$; this gives a DGLA morphism because the Schouten--Nijenhuis bracket satisfies $[\cW_i,\cW_j]\subset \cW_{i+j-1}$.

Now, the involution $*$ acts on $\H_*(\gr^{\tau^{\HH}}_i\CCC_R(A)) \hbar^j$ and  on $ \cW_i \Pol(A,0) \hbar^j$ as $(-1)^{i+j-1}$. Under the isomorphism above given by  division by $\hbar^{i-1}$, this corresponds to the involution $\hbar \mapsto -\hbar$ of $F^2\widehat{\Pol}(A,0) \by\hbar F^1\widehat{\Pol}(A,0) \by \hbar^2\widehat{\Pol}(A,0)\brh$, so the fixed points are $F^2\widehat{\Pol}(A,0)  \by \hbar^2\widehat{\Pol}(A,0)\brhh $.

Existence of quantisations then follows from the inclusion  of the first term in %$F^2\widehat{\Pol}(A,0) \by\hbar F^1\widehat{\Pol}(A,0) \by \hbar^2\widehat{\Pol}(A,0)\brh$, 
$F^2\widehat{\Pol}(A,0) \by \hbar^2\widehat{\Pol}(\sO,0)\brhh$,
 giving a morphism $\cP(A,0) \to Q\cP(A,0)^{sd} \subset Q\cP(A,0)$.
 
 \end{proof}
 
\begin{remark}\label{explicitquantrmk}
 In principle, the proof of  Corollary \ref{affquantcor} could be used to construct deformation quantisations. It would be similar to explicit application of  \cite{tamarkinOperadicKontsevichFormality}  to smooth varieties, for which the author is unaware of any examples.  After choosing an even associator, the first step would be to describe the resulting strong homotopy $P_2$-algebra operations on Hochschild cochains as combinations of brace operations. One would then have to solve iteratively for the essentially unique involutive filtered equivalence  between that s.h. $P_2$-algebra and the algebra of polyvectors, with the resulting $L_{\infty}$ equivalence generating quantisations from Poisson structures. 
 \end{remark}

 \subsubsection{\'Etale functoriality revisited}\label{intBCsn} %%added late

Functoriality for Hochschild complexes and polyvectors is subtle, but exists with respect to homotopy \'etale morphisms as in \cite[\S \ref{DQnonneg-Artindiagsn}]{DQnonneg} and  \cite[\S \ref{poisson-descentsn}]{poisson}. We now revisit and generalise the results there.

\begin{definition}\label{intBCdef}
 Given a category $\C$, let $\int B\C$ be the Grothendieck construction of the nerve of $\C$. Explicitly, objects of $\int B\C$ are pairs $(m,\, A \co [m] \to \C)= (m,\, A(0) \to A(1) \to \ldots \to A(m)) $ for $m \ge 0$, and a morphism $u \co (m,\, A \co [m] \to \C) \to (n,\, B \co [n] \to \C)  $ is a morphism $u \co \on \to \om$ in the simplex category such that $B= A\circ u$. 
\end{definition}

\begin{lemma}\label{intBClemma}
 The category $\C$ is equivalent to the simplicial localisation of $ \int B\C$ at the class $\cW_0$ of morphisms $u \co (m,\, A \co [m] \to \C)\to  (n,\, A \circ u\co [n] \to \C)$ with $u(0) =0 \in \om$. 
\end{lemma}
\begin{proof}
 The functor $\rho \co \int B\C \to \C$ is given by $\rho(m,A) = A(0)$ on objects and by $ \rho(u)=A(\gamma) \co A(0) \to A(u(0))$ on morphisms $u \co (m, A \co [m] \to \C)\to  (n, A \circ u \co [n] \to \C)$,  for the unique morphism $\gamma \co 0 \to u(0)$ in $\om$.

 In order to show that this induces an equivalence $L^{\cW_0}(\int B\C) \simeq \C$ of simplicial categories, by   \cite[Theorem 2.2]{DKEquivsHtpyDiagrams} it suffices to show that the functor $\rho^* \co (s\Set)^{\C} \to (s\Set)^{\int B\C, \cW_0}$ from the model category of simplicial set-valued functors on $\C$ to the model category of $\cW_0$-restricted $\int B\C$-diagrams 
 is a right Quillen equivalence.
 
 Now, giving a simplicial set-valued functor $F$ on $\int B\C$ is equivalent to giving a bisimplicial set $\int F$ over the nerve $B\C$, with  $(\int F)_n:= \coprod_{ x \in B_m \C} F(x)$. Thus the category $(s\Set)^{\int B\C, \cW_0}$ is equivalent to $ss\Set \da B\C$. Moreover, $F$ sends morphisms in $\cW_0$ to weak equivalences if and only if the maps $F(n,\,A(0) \to \ldots \to A(n))) \to F(0,\,A(0))$ are weak equivalences, which amounts to saying that the maps 
 \[
  (\int F)_n \xra{(\pd_1)^n} (\int F)_{0} \by_{B_{0}\C,(\pd_1)^n}B_n\C
 \]
are  weak equivalences. When $\int F$ is Reedy fibrant, this is equivalent to saying that $\int F$ is a left fibration over $B\C$ in the sense of \cite{debritoSegalGrothendieck}. 

The equivalence $\int^{-1} \co s\Set \da B\C \to  (s\Set)^{\int B\C, \cW_0}$ of categories is a right Quillen functor for the left fibration model structure on $s\Set \da B\C $, and the observation above implies that it is a Quillen equivalence. Composing this with the right Quillen equivalence of \cite[Theorem A]{debritoSegalGrothendieck} (in its variant for left fibrations) then gives the required result.
\end{proof}

If $\C$ is the category of $R$-CDGAs, EFC-DGAs or $\C^{\infty}$-DGAs, then for any diagram $D \co I \to \C$, we can define $D^{\poly}_{\oplus}(D)$ and $\Pol(D,0)$ by substituting $\HHom_B(M,N)$ with 
the equaliser of the obvious diagram
\[
\prod_{i\in I} \HHom_{B(i)}(M(i),N(i)) \implies \prod_{f\co i \to j \text{ in } I}   \HHom_{B(i)}(M(i),f_*N(j)).
\]
throughout the corresponding definitions for algebras.
These constructions behave well for diagrams $D \co [m] \to \C_{c, \onto}$ to the subcategory $\C_{c, \onto} \subset \C$ of cofibrant objects and surjective morphisms.
Since a morphism $u \co [n] \to [m]$ naturally induces a morphism $u^{\#} \co  D^{\poly}_{\oplus}(D)\to D^{\poly}_{\oplus}(u^*D) $ by restriction, the constructions $D \mapsto (D^{\poly}_{\oplus}(D), \tau^{\HH})$ and $D \mapsto \Pol(D,0)$ define  functors on the Grothendieck construction $\int B \C_{c, \onto}$.

Since all objects of $\C$ are fibrant, the functor $\C_{c,\onto} \to \C$ induces a weak equivalence of simplicial categories on localisation at weak equivalences. We are unaware of a reference for this fact, but \cite[Proposition 5.2]{DKfunction} gives a closely related statement with a very similar proof. Since the category admits a right homotopy calculus of fractions,  the key to the argument %are that $\C_{c,\onto}$ has a homotopy calculus of right fractions, given by cofibrant replacement of pullbacks, and 
is that for any morphism $f \co A \to B$ in $\C_c$, factorising the graph of $f$ leads to a weakly equivalent span $(A \xla{\sim}  P_fA \to B)$ in $\C_{c,\onto}$ with the first map a trivial fibration. The homotopy equivalence  is given by the span $(A \xla{\sim} P_fA \to P_{\id_B}B)$ (which lies in $\C_{c,\onto}$ whenever $f$ is a fibration) and its canonical levelwise trivial fibrations to the span above and to the span $(A = A \to B)$.  

When $u(0)=0$ and the morphisms in $D$ are all homotopy \'etale, the morphism $u^{\#}$ is moreover a filtered quasi-isomorphism, so the restriction of $D^{\poly}_{\oplus}$ to the subcategory $\int B (\C_{c, \onto,\et})$ of homotopy \'etale morphisms sends morphisms in $\cW_0$ to filtered quasi-isomorphisms. It then follows from Lemma \ref{intBClemma} and quasi-isomorphism invariance that $(D^{\poly}_{\oplus},\tau^{\HH})$ and $\Pol(-,0)$  define   $\infty$-functors on the  $\infty$-category  $\oL \C^{\et}$ of $R$-CDGAs, EFC-DGAs or $\C^{\infty}$-DGAs, and homotopy \'etale morphisms; the HKR quasi-isomorphism is then a natural transformation between them. Taking Maurer--Cartan elements as in Definition \ref{qpoldef} then  gives an  $\infty$-functor $Q\cP(-,0)$  from    $\oL \C^{\et}$ to the  $\infty$-category of simplicial sets.  

\subsubsection{Global quantisations}

% The following corollary can be rephrased as saying that curved deformation quantisations of  derived DM $n$-stacks $\fX$ correspond to curved homotopy Poisson algebras deforming  $\sO_{\fX}$ over $R\brh$, though under these hypotheses the curvature only manifests itself in gluing data, giving DQ algebroids. %%I don't think that's really very helpful; the remark afterwards covers all, I think

Incorporating the homotopy \'etale functoriality of \S \ref{intBCsn} into the functoriality of Theorem \ref{fildefhochthm1}
immediately leads to the following generalisation of Corollary \ref{affquantcor} on taking derived global sections on the \'etale site:
\begin{corollary}\label{DMquantcor}
Given a derived DM $n$-stack $\fX$ over $R$  with  perfect cotangent complex $\oL\Omega^1_{\fX/R}$, 
%and an $R$-linear Poisson structure $\pi$ on $A$, 
the space  $Q\cP(\fX,0):= \oR\Gamma(\fX_{\et}, Q\cP(\sO,0)) $   of $0$-shifted quantisations of $\fX$ from \cite[Definitions \ref{DQnonneg-Qpoissdef}, \ref{DQnonneg-QpoissdefX}]{DQnonneg} and its subspace $Q\cP(\fX,0)^{sd}$ of self-dual (or involutive) quantisations from  \cite[Definition 1.33]{DQnonneg} are respectively equivalent to the Maurer--Cartan spaces
\begin{align*}
  &\oR\Gamma(\fX_{\et},\mmc(  F^2\widehat{\Pol}(\sO,0)_{[-1]} \by\hbar F^1\widehat{\Pol}(\sO,0)_{[-1]} \by \hbar^2\widehat{\Pol}(\sO,0)_{[-1]}\brh)),\\
   &\oR\Gamma(\fX_{\et},\mmc(  F^2\widehat{\Pol}(\sO,0)_{[-1]} \by \hbar^2\widehat{\Pol}(\sO,0)_{[-1]}\brhh)).
   \end{align*}
%%can also rewite in terms of $\cW_i\Pol$ after rescaling as $\prod_{j\ge 0, p \ge 2}\cW_{p-j} \hbar^j$, i.e. $\hbar^2A \by \hbar T_A \by \prod_{p \ge 2} \L^p_AT_A and .

In particular, every Poisson structure $\pi \in \cP(\fX,0) = \oR\Gamma(\fX_{\et}, \mmc( F^2\widehat{\Pol}(\sO,0)_{[-1]}  )) $
admits self-dual quantisations in the form of almost commutative curved $A_{\infty}$-deformations $\sA_{\hbar}$  of $\sO_{\fX}$ with $\sA_{-\hbar}\simeq \sA_{\hbar}^{\op}$.

The analogous statements for derived $\C^{\infty}$ and derived analytic DM $n$-stacks (in the sense of \cite{DStein}) with perfect cotangent complexes also hold.
\end{corollary}

\begin{remarks}
The hypotheses of %the following corollary 
Corollary \ref{DMquantcor} 
are satisfied by any derived Deligne--Mumford stack locally of finite presentation over the CDGA $R$. When $R=\H_0R$, this includes underived  schemes $X$ which are local complete intersections over $R$, in which case the cotangent complex $\oL\Omega^1_{X/R}$ is concentrated  in homological degrees $[0,1]$. For such underived schemes, a quantisation in the sense of the corollary reduces to the usual notion, namely a DQ algebroid deformation of $\sO_X$ over $R\brh$. For more details, see \cite[Remarks \ref{DQLag-curvedrmk}, \ref{DQLag-algdex} and \ref{DQLag-curvedsdrmk}]{DQLag}.

In analytic settings, Corollary \ref{DMquantcor} similarly gives DQ algebroid quantisations for local complete intersections. Since $\C^{\infty}$ spaces tend to embed in affine space under mild finiteness hypotheses, Corollary \ref{affquantcor} also gives strict quantisations for LCI $\C^{\infty}$ spaces. 

Our hypothesis that $\fX$ have perfect cotangent complex cannot be  removed, since Mathieu's example \cite{mathieu} gives a non-quantisable Poisson structure on a non-LCI scheme. %%see also \cite[Remark 2.3.14]{schedlerDefsAlgNCGeom}
%%also note that \cite{NegronSchedler} not applicable, as that's talking about $E_1$-QIMs, and formality $E_2 \simeq P_2$ isn't fully compatible with forgetful to $E_1$, which is why things like Todd classes usually appear.
\end{remarks}

\begin{remark}\label{strictquantnrmk}
The space $Q\cP^+(\fX,0)$ of strict deformation quantisations, i.e. quantisations by $BD_1$-algebras (associative, almost commutative algebras) rather than algebroids, is given by replacing $ Q\widehat{\Pol}(\sO_{\fX},0)$ with the kernel $ \tilde{F}^2Q\widehat{\Pol}^+(\sO_{\fX},0)$ of the natural map $ \tilde{F}^2Q\widehat{\Pol}(\sO_{\fX},0) \to \hbar\sO_{\fX}\brh $, and similarly by $ \tilde{F}^2Q\widehat{\Pol}^+(\sO_{\fX},0)^{sd}:= \ker(  \tilde{F}^2Q\widehat{\Pol}(\sO_{\fX},0) \to \hbar\sO_{\fX}\brhh)$ for $Q\cP^+(\fX,0)^{sd}$. These maps are not in general compatible with the formality quasi-isomorphism of Corollary \ref{DMquantcor}, but they are pro-nilpotent surjections of DGLAs, so lead to   homotopy fibre sequences
\begin{align*}
 Q\cP^+(\fX,0) \to Q\cP(\fX,0) \to \oR\Gamma(\fX, \hbar \sO_{\fX}\brh)_{[-2]},\\
Q\cP^+(\fX,0)^{sd} \to Q\cP(\fX,0)^{sd} \to \oR\Gamma(\fX, \hbar \sO_{\fX}\brhh)_{[-2]}.
 \end{align*}
In particular this means that if $\H^2(\fX, \sO_{\fX})=0$ then every (self-dual) quantisation in  Corollary \ref{DMquantcor} comes from a strict (self-dual) quantisation. %%Also worth noting that the strict quantisaiton is unique up to filtered quasi-isomorphism if we also have $\H^1(\fX,\sO_{\fX})=0$.
\end{remark}

%% $BD_1$-algebras in the sense of \cite[\S 3.5.1]{CPTVV}; those in turn are named by analogy with the Beilinson--Drinfeld ($BD$ or $BD_0$) algebras of \cite[\S 2.2]{CostelloGwilliamVol1}. %Ref for almost comm is Ginzburg's lectures on D-mods. that does necessitate $\gamma$ filtration instead of $\tau$

%%will just note here that for $\H^1(\fX,\sO_{\fX})=0$, get same argt for $(-1)$-shifted on $\sO_{\fX}$ (needs right $\sD_{\fX}$-mod str) with $\Delta(1)=0$. Beilinson--Drinfeld ($BD$ or $BD_0$) algebras of \cite[\S 2.2]{CostelloGwilliamVol1}

\subsubsection{Quantisation of $1$-shifted co-isotropic structures}\label{coisosn1} 

%%note that we have to construct $\CCC_{R, \oplus}(\sO_{\fX})$ as a sheaf of brace algebras, so functoriality issues. The constructions for strings are limits of diagrams of associative algebras, so that's OK. For braces, note that for  $\phi_i \in \Pol(A)$ and $\psi \in \Pol(A,B)$, $\{\psi\}\{\phi_1, \ldots, \phi_n\}$ makes sense, and likewise $\{\chi\}\{\psi_1, \ldots, \psi_n\}$ for $\chi \in \Pol(B)$. For elts $(\phi, \phi')$ with $f \circ \phi = \phi' \circ f$, we then have $f \circ \{\phi\}\{\psi_1, \ldots, \psi_n\}= \{\phi' \circ f\}\{\psi_1, \ldots, \psi_n\}= \{\phi'\}\{f \circ \psi_1, \ldots, f\circ \psi_n\}= \{\phi'\}\{\psi'_1, \ldot, \psi_n'\}\circ f$, as required.

In \cite[\S 5.3]{MelaniSafronovII}, existence of quantisations for $n$-shifted co-isotropic structures is established as a direct consequence of the formality of the $E_n$ operad, for $n>1$. Theorem \ref{fildefhochthm1} allows us to establish the corresponding result for  $1$-shifted co-isotropic structures. 
 
Before defining quantised co-isotropic structures, we need a few preliminaries. We will use the term $A_{\infty}$-brace algebra to refer to  algebras $B$ for the 
brace operad $\Br$ of \cite[\S 3.4]{willwacherHtpyBracesFormality}, corresponding to the restricted class of $B_{\infty}$-algebras from \cite[\S 5.2]{GetzlerJones}; this enlarges the class of brace algebras from \S \ref{HHsn} by having an $A_{\infty}$-algebra structure rather than just an associative cup product, and can be encoded as a bialgebra structure on  the dg coalgebra $\b_{\Ass}B$ given by the bar construction, with restrictions.

\begin{definition}\label{TQpoldef}
Given a $0$-shifted quantisation $\Delta \in  \mc(\tilde{F}^2Q\widehat{\Pol}(A,0)[1]) $, define the centre of $(A,\Delta)$ by
\[
 T_{\Delta}Q\widehat{\Pol}(A,0):= \prod_{p \ge 0}F_p\CCC^{\bt}_R(A)\hbar^{p},
\]
with derivation $\delta \pm b + [\Delta,-]$ (necessarily square-zero by the Maurer--Cartan conditions). The  $A_{\infty}$ operations $\smile +\{\Delta\}\{\}_2$ and  $\{\Delta\}\{\}_i$ for $i \ge 3$ and unchanged brace operations $\{\}\{\}_j$ for $j \ge 1$ make  this an $A_{\infty}$ brace algebra. Explicitly, this structure corresponds to twisting the differential on the bialgebra $\b_{\Ass}T_0Q\widehat{\Pol}(A,0)$ %associative bar construction (a dg bialgebra)
 by the commutator of $\Delta$ with respect to the product induced by the brace operations. 

Similarly, given a $0$-shifted Poisson structure $\pi \in  \mc(F^2\widehat{\Pol}(A,0)$, define the centre of $(A,\pi)$ by 
\[
 T_{\pi}\widehat{\Pol}(A,0):=  (\widehat{\Pol}(A,0),\delta_{\widehat{\Pol}} +[\pi,-]), 
\]
with the same commutative product and shifted Lie bracket as $\widehat{\Pol}(A,0)$ making this a $P_2$-algebra.
\end{definition}

We now take the model for $BD_2$ given by the completed Rees construction of the good truncation filtration on the $A_{\infty}$ brace operad, so  a $BD_2$-algebra is an almost commutative $A_{\infty}$ brace algebra in  complete $R\brh$-modules  equipped with the $\hbar$-adic filtration.

Note that the following  is slightly weaker than the $n=1$ case of  \cite[Definition 5.13]{MelaniSafronovII}, since we cannot require the quantisation of $\fX$ to be strict.
\begin{definition}\label{qcoisodef}
 Define a $1$-shifted  quantised co-isotropic structure on a morphism $f \co \fX \to \fY$ of  (algebraic) derived DM $n$-stacks  to consist of:
 \begin{enumerate}
  \item a quantised $0$-shifted Poisson structure $\Delta$ on $\fX$,
  \item  a quantised $1$-shifted Poisson structure on $\fY$ in the form of a
 $BD_2$-algebra $\tilde{\sO}_{\fY}$  deforming the CDGA $\sO_{\fY}$ in the sense that it is equipped with a CDGA quasi-isomorphism $\tilde{\sO}_{\fY}\ten_{R\brh}^{\oL}R \to \sO_{\fY}$, and
 \item   a $BD_2$-algebra morphism 
 \[
F \co   f^{-1}\tilde{\sO}_{\fY} \to T_{\Delta}Q\widehat{\Pol}(\sO_{\fX},0)
 \]
 of \'etale hypersheaves on $\fX$,  where the target is defined using the functoriality of \S \ref{intBCsn}.
 \end{enumerate}

 We say that this quantisation is self-dual if $\Delta$ is self-dual, the $BD_2$-algebra  $\tilde{\sO}_{\fY}$ is given a quasi-involutive structure and $F$ intertwines the involutions.
  \end{definition}
 
Note that on reducing modulo $\hbar$ and applying the HKR quasi-isomorphism, this gives a $0$-shifted Poisson structure $\pi$ on $\fX$, a  strong homotopy  $P_2$-algebra structure $\varpi$ on the CDGA $\sO_{\fY}$ (i.e. a $1$-shifted Poisson structure) and a zigzag   $f^{-1}\sO_{\fY} \xla{\sim} f^{-1}\tilde{\sO}_{\fY}/\hbar  \to T_{\pi}\widehat{\Pol}(\sO_{\fX},0)$ of strong homotopy $P_2$-algebra morphisms, with the first a quasi-isomorphism. This last is equivalent to giving a morphism $\rho \co f^{-1}\sO_{\fY}  \to T_{\pi}\widehat{\Pol}(\sO_{\fX},0)$ in the $\infty$-category of $P_2$-algebras, so $(\pi,\varpi,\rho)$ are precisely the data of a $1$-shifted co-isotropic structure on $f$ in the sense of \cite{MelaniSafronovII}.

% % % % REFEREE: p.24, Corollary 2.34 (and Definition 2.34): it would be good to mention that (or how) one can twist a BD_2-algebra by a Maurer--Cartan element. In the proof of 2.35 (mainly the formula): I suppose one uses some compatibility between formality and twisting here (probably based on compatibility with the inclusion of the Lie operad mentioned in Proposition 2.18)?

% Writing $F^p\widehat{\Pol}(f,1)$ for the product of components of weight $\ge p$ in the $\bG_m$-equivariant $P_2^{ac}$-algebra $\mathbf{Pol}(f,n)$ of \cite[\S 1.3 and Definition 2.5]{MelaniSafronovII} and for the analogous analytic and $\C^{\infty}$ constructions, we then have the following. Note that the  $\oR\Gamma(\fY,\sO)$ term only appears because our convention takes the quantisation on $\fY$ to be strict.
\begin{corollary}\label{coisocor1}
Given a morphism $f \co \fX \to \fY$ of (algebraic) derived DM $n$-stacks  such that $\fX$ has perfect cotangent complex,
 every $1$-shifted co-isotropic structure on $f$   admits self-dual quantisations in the sense of Definition \ref{qcoisodef}.
 \end{corollary}
 \begin{proof}
 The twisting in Definition \ref{qcoisodef} commutes with formality, by \cite[Theorem 1.1]{DolgushevWillwacher}. Alternatively, we can interpret  $(F,\Delta)$ as a curved brace morphism, with any formality equivalence for the unital brace operad as a Hopf operad compatibly inducing formality for the curved brace operad by Koszul duality.  
 
 Applying the $\infty$-equivalence  $p_w$  for any even  associator $w$, and then composing with the filtered quasi-isomorphism $\mathrm{HKR} \circ \alpha_{w,B}$ from  the proof of Theorem \ref{fildefhochthm1}, we thus see that a  quantisation is equivalent to the data of Theorem \ref{fildefhochthm1} together with  a formal deformation  $p_w\tilde{\sO}_{\fY}$ of $\sO_{\fY}$ over $R\brh$ equipped with an $R\brh$-linear  $1$-shifted Poisson structure, and 
 a morphism 
 \[
 f^{-1}p_w\tilde{\sO}_{\fY} \to ( \prod_{p \ge 0}  \oL\Lambda^p_{\sO_{\fX}}(T_{\fX/R})_{[p]} \hbar^{p}\brh, \delta + \{\alpha_w\Delta, -\})
 \]
 of strong homotopy $P_2$-algebras over $R\brh$.  In the notation of \cite{MelaniSafronovII}, this is  a formal $P_{[2,1]}\brh$-algebra deformation  of a $P_{[2,1]}$-algebra (i.e. $1$-shifted co-isotropic) structure on $f$, except that we allow our $P_{[2,1]}\brh$-algebra to incorporate $\hbar^2$-curvature in the underlying $P_1\brh$-algebra deformation of $\sO_{\fX}$. When the quantisation is self-dual, the deformation is invariant under the involution $\hbar \mapsto -\hbar$, or equivalently induced by a deformation over $R\brhh$.
 
 In particular, every $P_{[2,1]}$-algebra structure $\sP$   on $(\sO_{\fY}, f^{-1}\sO_{\fY} \to \sO_{\fX})$ admits  a self-dual quantisation corresponding under the equivalence above to  the trivial $P_{[2,1]}\brh$-algebra deformation $\sP\brh$  of $\sP$.
 \end{proof}

 \begin{remark}[Analytic and $\C^{\infty}$ analogues]\label{EFCcoisormk}
  For structures defined as in Definition \ref{qcoisodef}, the proof of Corollary \ref{coisocor1} is still valid for analytic and $\C^{\infty}$ derived DM $n$-stacks. However, such structures are too weak to quantise the correct notion of co-isotropic structures in those cases, since the s.h. $P_2$-algebra $\tilde{\sO}_{\fY}/\hbar$ and s.h. $P_2$-algebra morphism $F/\hbar$ should  be defined in terms of EFC or $\C^{\infty}$ multiderivations. 
  
Instead of constructing $BD_2$-algebras and morphisms in the category of complexes over $R\brh$, the solution is to work in the pre-triangulated dg category generated by $\sO_{\fY}\brh$-modules and differential operators. Although this dg category is not monoidal, we can still define algebras by endowing it with the dg multicategory (i.e. coloured dg operad) structure in which multi-operations $(M_1, \ldots, M_r) \to N$ are given by (EFC or $\C^{\infty}$) polydifferential operators. Because an EFC or $\C^{\infty}$ polydifferential operator which is an algebraic derivation is automatically an EFC or $\C^{\infty}$ multiderivation, $P_k$-algebras in that dg multicategory do correspond to $(k-1)$-shifted Poisson structures.

Working  in that dg multicategory and recasting Definition \ref{qcoisodef} accordingly,
the proof of Corollary \ref{coisocor1} then gives genuine deformation quantisations of  $1$-shifted co-isotropic structures in   analytic and $\C^{\infty}$ settings. 
\end{remark}

\section{Quantisations on derived Artin stacks} \label{Artinsn}

When applied to (derived) Artin stacks, the definition of  Poisson structures in \cite[\S \ref{poisson-Artinsn}]{poisson} or \cite{CPTVV} and their
quantisations in \cite[Definitions \ref{DQnonneg-Qpoissdef}, \ref{DQnonneg-QpoissdefX}]{DQnonneg} is more subtle than for derived DM stacks, since polyvectors and the Hochschild complex are not functorial with respect to smooth morphisms. 

This is resolved in \cite[\S \ref{poisson-stackyCDGAsn}]{poisson} by observing that the formal completion of a  derived Artin stack $\fX$ along an affine atlas $f\co U \to \fX$ with $f$ smooth can be recovered from a 
% commutative bidifferential  bigraded algebra
% \[
%  A^{\bt}_{\bt} = (A^0_{\bt} \xra{\pd} A^1_{\bt}\xra{\pd} A^2_{\bt}\xra{\pd}\ldots).
% \]
%  These are referred to as stacky CDGAs in \cite{poisson,DQnonneg}, and correspond to the graded mixed cdgas of \cite{CPTVV}.
%
%stacky CDGA (see Definition \ref{stackyCDGAdef} below). 
commutative algebra in double complexes. 

These lead to sufficiently functorial constructions of polyvectors and Hochschild complexes, but only after passing to sum-product total complexes $\hatTot$ (Tate realisations in the terminology of \cite{CPTVV}).  
Generalising quantisation results to derived Artin stacks is thus far from straightforward, and to establish them we  introduce an intermediate category  $\cU_{P_k^{ac}}[\hbar^2]$ in \S \ref{LinftyTate} which delicately balances the requirements of functoriality and deformations.   

\begin{remark}
 While the results of \S \ref{fildefpoisssn} easily adapt  to  $P_k^{ac}$-algebras in double complexes, the $P_{n+2}$-algebra of $n$-shifted  polyvectors only satisfies the analogue of  Proposition \ref{weightprop} before applying $\hatTot$. In terms of our notation below, $\cPol(-,n)$ (roughly corresponding to $\Pol^{\mathrm{int}}(-,n)$ in \cite{CPTVV}) is not sufficiently functorial, while the algebra $\hatTot_{\bG_m} \cPol(-,n)$  (roughly corresponding to $\Pol^t(-,n)$ in \cite{CPTVV}) cannot simply be described as derived symmetric powers of the tangent complex, so we cannot constrain its deformations. %%key point there is that $\hatTot$'s destroying cofibrancy
\end{remark}

% We can now to adapt \S \ref{fildefpoisssn} to stacky $P_k^{ac}$-algebras. While there is a natural analogue of Corollary \ref{fildefcor1} for stacky algebras which is satisfied by $\cPol(A/R,k-1)$ under mild finiteness hypotheses,   it is inadequate for our purposes because it does not typically extend to polyvectors of arrows as used in Lemma \ref{hatHHomPollemma}. Likewise, the $P_k^{ac}$-algebra $\hatTot_{\bG_m}  \cPol(A/R,k-1)$ does not usually satisfy the conditions of Corollary \ref{fildefcor1}, so instead we will be applying Theorem \ref{defPcor2thm}.
%Our solution will be to introduce an intermediate category through which $\hatTot$ factorises, and in which our $P_k^{ac}$-algebras of polyvectors have no non-trivial involutive almost commutative deformations. The essential idea is to work with algebras in the Tate dg category of Definition \ref{strictdgdef}; with subtleties arising because that category does not contain all colimits.

\subsection{Double complexes and stacky Hochschild complexes}\label{stackyHHsn}

 \begin{definition} \label{stackyCDGAdef}
A stacky CDGA is  a chain cochain complex (i.e. a double complex) %$A^{\bt}_{\bt}$ 
\[
 A^{\bt}_{\bt} = (A^0_{\bt} \xra{\pd} A^1_{\bt}\xra{\pd} A^2_{\bt}\xra{\pd}\ldots).
\]
equipped with a commutative product $A\ten A \to A$ and unit $\Q \to A$.  Given a chain CDGA $R$, a stacky CDGA over $R$ is then a morphism $R \to A$ of stacky CDGAs, where we silently regard $R$ as a stacky CDGA concentrated in cochain degree $0$.
%We write $DGdg\CAlg(R)$ for the category of  stacky CDGAs over $R$, and $DG^+dg\CAlg(R)$ for the full subcategory consisting of objects $A$ concentrated in non-negative cochain degrees.

A stacky $\C^{\infty}$-DGA (resp. EFC-DGA over $K$) is a stacky CDGA over $\R$ (resp. $K$) $A^{\bt}_{\bt}$ equipped with a $\C^{\infty}$-DGA (resp. EFC-DGA) structure on $A^0_{\bt}$ and such that $\pd \co A^0_0 \to A^1_0$ is a  $\C^{\infty}$-derivation (resp. EFC-derivation) in the sense of \cite[Definition \ref{DStein-oplusdef}]{DStein}. 
\end{definition}
As explained in \cite[Remark \ref{poisson-cfCPTVV}]{poisson}, these correspond to the ``graded mixed cdgas'' of \cite{CPTVV} (but beware that the latter are something of a misnomer, not having mixed differentials). The structure in the chain direction encodes derived information, while the cochain direction encodes stacky structure.

For general derived Artin $n$-stacks, these formal completions are constructed in \cite[\S \ref{poisson-stackyCDGAsn}]{poisson} by forming affine hypercovers as in \cite{stacks2}, and then applying the functor  $D^*$ (left adjoint to denormalisation) to obtain a stacky CDGA. When $\fX$ is a   derived Artin $1$-stack, %whose diagonal $\fX \to \fX \by \fX$ is affine, or even just represented by derived DM stacks, %%that was poorly phrased, as not a generalisation; condition would be that $(U/U\by^h_{\fX}U)_{\dR}$ is completion of affine; diagonal being affine over \'etale would do.  Derived Artin $1$-stack suffices, but we only need diagonal to be \'etale locally affine.
%%which is $1$-geometric (i.e. has affine diagonal), 
the formal completion of an affine atlas $U \to \fX$ is simply  given by the relative de Rham complex
\[
 O(U) \xra{\pd} \Omega^1_{U/\fX} \xra{\pd} \Omega^2_{U/\fX}\xra{\pd}\ldots,
\]
which arises by applying the functor $D^*$  %of \cite[\S \ref{poisson-stackyCDGAsn}]{poisson} 
to the \v Cech nerve of $U$ over $\fX$.

\begin{lemma}\label{modellemma}
 There is a cofibrantly generated model structure on  the category of cochain chain complexes in which 
 fibrations are surjections and weak equivalences are levelwise quasi-isomorphisms in the chain direction. For any  chain operad $\cP$, this induces  a cofibrantly generated model structure on   $\cP$-algebras in cochain chain complexes, in which 
 fibrations  and weak equivalences are those of the underlying cochain chain complexes.
 
 Similarly, for any $\bG_m$-equivariant chain operad $\cP$, there is a   cofibrantly generated model structure on  the category of $\bG_m$-equivariant  $\cP$-algebras in cochain chain complexes with the same cofibrations, fibrations  and weak equivalences.
\end{lemma}
\begin{proof}
This follows as in  the proof of \cite[Lemma \ref{poisson-bicdgamodel}]{poisson}.  Each generating (trivial) cofibration in the non-$\bG_m$-equivariant setting gives rise to a $\Z$-indexed family of  generating (trivial) cofibrations in the $\bG_m$-equivariant setting, corresponding to choices of weight for the generators.
\end{proof}
 Note that since $\cP$ is assumed to be  a chain operad, the operations for the $\cP$-algebras $A$ in Lemma \ref{modellemma} are  maps 
\[
\cP(n)_i\ten A_{j_1}^{m_1}\ten \ldots \ten  A_{j_n}^{m_n} \to A_{i+(j_1+\ldots +j_n)}^{m_1+\ldots +m_n};
\]
we refer to such  $\cP$-algebras in cochain chain complexes as stacky $\cP$-algebras; beware that this conflicts slightly with the terminology of Definition \ref{stackyCDGAdef} because  we allow stacky $\cP$-algebras to have terms of negative cochain degree, while our  stacky CDGAs are concentrated in non-negative cochain degrees.

We also have a more subtle variant:
\begin{lemma}\label{modellemmanonneg} %%new
Given a chain  CDGA $R$,
there is a cofibrantly generated model structure on the category of $\bG_m$-equivariant stacky $P_k^{ac}$-algebras $A$ over $R$ of non-negative weights (i.e. $\cW_iA=0$ for all $i<0$), in which 
 fibrations are surjections and weak equivalences are levelwise quasi-isomorphisms.
 
 Moreover, the forgetful functor from this category to the category of  $\bG_m$-equivariant stacky $R$-CDGAs preserves cofibrant objects.
\end{lemma}
\begin{proof}
 We apply \cite[Theorem 11.3.2]{Hirschhorn}  to the forgetful functor mapping to non-negatively weighted $\bG_m$-equivariant double complexes of $\Q$-vector spaces. The only non-trivial condition to check for the first statement is that the left adjoint $F$ sends trivial cofibrations to levelwise quasi-isomorphisms. For the second statement it suffices to show that for the class $C$ of cofibrations of double complexes, pushouts of morphisms in $FC$ over $FC$-cells (i.e. transfinite iterated pushouts of 
 morphisms in $FC$) are cofibrations of $\bG_m$-equivariant stacky $R$-CDGAs.
 
 Since the forgetful functor factors through the category of  non-negatively weighted $\bG_m$-equivariant stacky $s^{1-k}\hbar^{-1}\Lie$-algebras (by forgetting the commutative $R$-algebra structure but keeping the Lie bracket), the left adjoint $F$ factors as $(R\ten_{\Q}\Symm_{\Q})\circ L_+$, where $L_+$ is the left adjoint  to the forgetful functor from that intermediate category. For the Lie bracket of chain degree $k-1$ and $\bG_m$-weight $-1$, the functor $L_+$ sends a $\bG_m$-equivariant double complex $V$ to the quotient $L(V)/(\cW_{<0}L(V))$ of the free graded Lie algebra $L(V)$ (with Koszul signs) by the Lie ideal generated by terms of negative weight. 
 
 Now, a cofibration of double complexes is an injective map   $U \into V$ for which the quotient $V/U$ is acyclic in the cochain direction (i.e. $\H^i(V_j/U_j)=0$ for all $i,j$); it is trivial if it is moreover acyclic in the chain direction. On $V/U$ there is thus a contracting cochain homotopy $h \co (V/U)_{\#}^{\#} \to V/U^{\#[-1]}_{\#}$  such that the graded commutator $[\pd,h]$ is the identity, and if the cofibration is trivial, there is also a contracting chain homotopy $h'   \co (V/U)_{\#}^{\#} \to (V/U)^{\#}_{\#[1]}$ such that $[\delta,h']$ is the identity.  If we give $V$ an increasing filtration with $\Fil_{-1}V=0$, $\Fil_0V=U$ and $\Fil_1V=V$, then the Lie bracket induces an exhaustive increasing filtration $\Fil$ on $L_+(V)$, with $\gr^{\Fil}L_+(V)\cong L_+(U \oplus (V/U))$. 
 
 We can then define a derivation $H \co \gr^{\Fil}L_+(V)_{\#}^{\#}\to  \gr^{\Fil}L_+(V)^{\#[-1]}_{\#}$ (resp. $H' \co \gr^{\Fil}L_+(V)_{\#}^{\#}\to  \gr^{\Fil}L_+(V)^{\#}_{\#[1]}$), given on generators by $0$ on $U$ and $h$ (resp. $h'$) on $V/U$.  It follows that the commutator $[\pd,H]$ (resp. $[\delta,H]$) is a derivation acting as $0$ on generators $U$ and as the identity on generators $V/U$, so it must act as multiplication by $p$ on $\gr^F_pL_+(V)$. 
 
 The maps $\Fil_pL_+V \to \Fil_{p+1}L_+V$ are thus cofibrations (resp. trivial cofibrations) of double complexes for all $p \ge 0$, since $h/(p+1)$ and $H/(p+1)$ provide the necessary homotopies on the quotient, so $L_+U \to L_+V$ is a cofibration (resp. trivial cofibration) of double complexes. Passing to symmetric powers over $R$ then gives that $F(U) \to F(V)$ is a cofibration (resp. trivial cofibration) of $\bG_m$-equivariant CDGAs, proving the first statement.
 
 For the second statement, observe that cochain acyclicity of the quotient allows us to split any cofibration of double complexes if we turn off the chain differential. Any $FC$-cell $A$ thus has the form $(F(U_{\#}),\delta)$ for some differential $\delta$, and a pushout $B$ of $A$ along a map in $FC$ takes the form $ (F(U_{\#}\oplus V_{\#}),\delta)$ with $\delta(U \oplus V) \subset A$ and $V$ acyclic in the cochain direction. Taking the increasing filtration $\Fil$ of $L_+(U \oplus V)$ by powers of $V$, the quotients $\gr^{\Fil}_p L_+(U_{\#} \oplus V_{\#})$ are cochain acyclic with $\delta(\Fil_{p+1} ) \subset \Symm(\Fil_p)$, so taking symmetric powers gives a sequence of cofibrations $\Symm(\Fil_p) \to \Symm(\Fil_{p+1})$ %%shd be $\Symm^{>0}$.
 of stacky sub-CDGAs of $B$; composing them all implies that the morphism $A \to B$ is a cofibration of $\bG_m$-equivariant stacky CDGAs. 
  \end{proof}

Hochschild complexes and multiderivations on stacky CDGAs are then defined in terms of the semi-infinite total complex:
\begin{definition}\label{hatTotdef}
 The subcomplex
 $\hatTot  V \subset \Tot^{\Pi}V$ is given by
\[
 %(\hatTot  V)^m := (\prod_{i \le 0} V^i_{i-m}) \oplus (\bigoplus_{i>0}   V^i_{i-m}),
(\hatTot  V)_m := (\bigoplus_{i < 0} V^i_{i+m}) \oplus (\prod_{i\ge 0}   V^i_{i+m})
\] %%I've changed this from chain to cochain notation
with differential $\pd \pm \delta$. This functor has the properties that it maps levelwise quasi-isomorphisms in the chain direction to quasi-isomorphisms, and that it  is lax monoidal. 

Applied to the internal $\Hom$ functor $\cHom$,  this construction gives chain complexes
\[
 \hatHHom_R(M,N):= \hatTot \cHom_R(M,N)
\]
 for cochain complexes $M,N$ of  $R$-modules in chain complexes, and hence a dg enhancement of the monoidal category of $R$-cochain chain complexes.
\end{definition}

\begin{definition}\label{HHdefa}
 For a stacky DGAA $A$ (i.e. an associative algebra in double complexes) over a chain CDGA $R$, 
 %and an $A$-bimodule $M$ in chain cochain complexes, 
 we define the internal cohomological  Hochschild complex $\C\C_{R,\oplus}(A)$ 
 by replacing $\HHom$ with $\cHom$ in Definition \ref{HHdef0} (as in \cite[Definition \ref{DQLag-HHdefa}]{DQLag})  to give a  chain cochain complex. Note that for the conventions we have chosen, this means that the Hochschild differential is acting in the chain direction.

We similarly define $\cD^{\poly}_{\oplus}(A) \subset \C\C_{R,\oplus}(A)$ to be the double subcomplex of polydifferential operators (including the $\C^{\infty}$ setting with $R=\R$ and the EFC setting with $R=K$,  a complete valued field), defined exactly as in Definition \ref{Dpolydef0}, but with an additional grading coming from the cochain grading on $A$.

The filtration $\tau^{\HH}$ on $\C\C_{R,\oplus}(A)$  and $\cD_{\oplus}^{\poly}(A)$ is then given by good truncation in the Hochschild direction.
\end{definition}

\begin{definition}\label{cPoldef}%%put $k$ shifts in here?
 For $A$ a cofibrant stacky $R$-CDGA (resp. stacky $\C^{\infty}$-DGA, resp. stacky EFC-DGA),  define the $\bG_m$-equivariant  stacky $P_{n+2}^{ac}$-algebra %(i.e. a $P_{n+2}^{ac}$-algebra in $\bG_m$-equivariant double complexes)  
 $\cPol(A,n)$ by
\[
\cPol(A,n):= \bigoplus_{p \ge 0} \cHom_A(\CoS_A^p((\Omega^1_{A, \boxempty})_{[-n-1]}),A),
%\cPol(A/R,0):= \bigoplus_{p \ge 0} \cHom_A(\Omega^p_{A/R},A)_{[p]},
\]
 where $p$ is the weight for the $\bG_m$-action, the commutative multiplication and Lie bracket are defined in the usual way for  polyvectors, and $\Omega^p_{A,\boxempty}$ is $\Omega^p_{A/R}$ (resp. $\Omega^p_{A,\C^{\infty}}$, resp. $\Omega^p_{A/K,\EFC} $). 
 \end{definition}

In particular, note that $\cPol(A,0)= \bigoplus_{p \ge 0} \cHom_A(\Omega^p_{A,\boxempty},A)_{[p]}$.

%%%cut 6 originally here (MASSiVE), but a partial restoration

 \subsection{\'Etale functoriality and \tps{$\hatHHom$}{Tate}-equivalences}\label{hatHHomsn}

We now consider conditions for a morphism $f \co P \to Q$ of $A$-modules to be a $\hatHHom_A$-homotopy equivalence, i.e. for the homology class $[f] \in \H_0\hatHHom_A(P, Q)$ to have an inverse in  $\H_0\hatHHom_A(Q, P)$. Writing $\sigma^{\le n}M:=M/\sigma^{>n}M$ for the brutal cotruncation  in the cochain direction, we have:

\begin{lemma}\label{hatHHomlemma} 
 If $A$ is a stacky CDGA concentrated in non-negative cochain degrees, and $P$ and $Q$ are cofibrant $A$-modules in double complexes, with the chain complexes $(P\ten_AA^0)^i, (Q\ten_AA^0)^i $  zero for all $i <r$ and 
 acyclic for all $i>s$, %%so WMA $s>r$, else $s=-\infty$. %%can afford to decrease $r$ and increase $s$
 then a morphism $f \co P \to Q$ %%in the strict category, which is the only one we've introduced in this exposition (no Tate)
 is  a $\hatHHom_A$-homotopy equivalence
whenever the map  $\Tot\sigma^{\le s} (P\ten_AA^0) \to \Tot \sigma^{\le s}(Q\ten_AA^0)$ is a quasi-isomorphism. 
Under this condition, the morphism $f^* \co \cHom_A(Q,A) \to \cHom_A(P,A)$ is also a  $\hatHHom_A$-homotopy equivalence. 
\end{lemma}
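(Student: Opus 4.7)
The plan is to construct a homotopy inverse $g \in \z^0 \hat{\HHom}_A(Q,P)$ to $f$ by a descending induction on the cochain filtration of $A$, and then to derive the dual statement formally.

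First I would reduce the construction of a candidate inverse to the $A^0$-level. Using that $P$ and $Q$ are cofibrant $A$-modules and that $A$ is non-negatively graded in cochain degree, the tensor products $P \ten_A A^0$ and $Q \ten_A A^0$ are cofibrant $A^0$-modules in cochain chain complexes whose effective cochain support lies in $[r,s]$ (zero below $r$, acyclic above $s$). Combining the hypothesis that $\Tot \sigma^{\le s}(f \ten_A A^0)$ is a quasi-isomorphism with the acyclicity of all higher cochain degrees, I would deduce that $f \ten_A A^0$ is a $\hat{\HHom}_{A^0}$-homotopy equivalence, and therefore admits an explicit homotopy inverse $\bar g \in \z^0 \hat{\HHom}_{A^0}(Q \ten_A A^0, P \ten_A A^0)$. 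Over $A^0$ this reduces to a standard bounded-derived-category argument.

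Next I would lift $\bar g$ to $g \in \z^0 \hat{\HHom}_A(Q, P)$ along the tower $\cdots \to A/A^{>n+1} \to A/A^{>n} \to \cdots \to A^0$. At each step the obstruction to extending an $A/A^{>n}$-linear lift to an $A/A^{>n+1}$-linear one lies in a chain complex of internal homomorphisms involving the $(n+1)$-th cochain part of $A$ twisted against $P$ and $Q$; by cofibrancy of $P$, the acyclicity hypothesis in cochain degrees above $s$, and the $\Tot \sigma^{\le s}$ quasi-isomorphism for the cone of $f$ (which inherits the hypotheses of the lemma), these obstructions vanish. A parallel construction produces chain null-homotopies of $gf - \id_P$ and $fg - \id_Q$ in $\hat{\HHom}_A$, exhibiting $f$ as a $\hat{\HHom}_A$-homotopy equivalence.

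The dual statement for $f^{*}$ then follows by enriched functoriality: applying the contravariant enriched functor $\cHom_A(-,A)$ to $g$ and to the null-homotopies above yields a $\hat{\HHom}_A$-homotopy inverse $g^{*}$ to $f^{*}$, together with homotopies for $g^{*}f^{*}$ and $f^{*}g^{*}$. The main obstacle is to verify that the stagewise lifts assemble into a bona fide element of the semi-infinite total complex $\hat{\HHom}_A$, rather than merely a formal infinite product; this reduces to a careful bookkeeping argument confirming that, thanks to the boundedness and acyclicity conditions, in each fixed bidegree only finitely many cochain degrees contribute obstructions outside the acyclic range $(s,\infty)$, so the products implicit in $\hat{\Tot}$ converge and genuinely represent derived maps.
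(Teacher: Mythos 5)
Your overall strategy --- build an inverse over $A^0$ and lift it up the tower $A/A^{>n}$ --- is a plausible alternative to the paper's argument, and your handling of the dual statement via the enriched contravariant functor $\cHom_A(-,A)$ is exactly what the paper does. But the lifting step contains a genuine gap. The assertion that the obstructions to extending from $A/A^{>n}$ to $A/A^{>n+1}$ ``vanish'' by cofibrancy and the acyclicity hypotheses is not justified, and is false at the level of obstruction groups: the kernel of $P\ten_A A/A^{>n+1}\to P\ten_A A/A^{>n}$ is $(P\ten_AA^0)\ten_{A^0}A^{n+1}$, so the obstruction to lifting a degree-zero class lies in $\H_{-1}\hat{\HHom}_{A^0}(Q\ten_AA^0,(P\ten_AA^0)\ten_{A^0}A^{n+1})$, and nothing in the hypotheses makes this group zero (already for the first step, with kernel $A^1$, it is an ordinary nonvanishing Ext-type group). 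What is true is that the \emph{particular} obstruction class attached to the inverse of $f\ten_AA/A^{>n}$ vanishes; establishing that requires an inductive argument --- e.g.\ a five lemma on the long exact sequences of these square-zero extensions, using that composition with the $A^0$-level equivalence induces isomorphisms on all the $\hat{\HHom}$-groups of the kernels --- which your write-up does not supply. A second, related gap is the passage to the limit: even with compatible lifts of homotopy classes at every finite stage, you must identify $\hat{\HHom}_A(Q,P)$ with the homotopy limit of the tower and control a ${\Lim}^1$ of $\H_1$-groups, and this is precisely where the definition of $\hat{\Tot}$ (direct sums in negative cochain degrees versus products in non-negative ones) bites. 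Your closing ``bookkeeping'' remark defers exactly the point where the boundedness and acyclicity hypotheses are actually spent.

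For comparison, the paper avoids constructing $g$ by hand: it shows that $f^*\co\hat{\HHom}_A(Q,M)\to\hat{\HHom}_A(P,M)$ is a quasi-isomorphism for \emph{every} cochain-bounded-below $M$, via a pro-object argument identifying $\hat{\HHom}_A(P,M)$ with $\Tot^{\Pi}\sigma^{\ge t-s}\cHom_A(P,M)$ (this is where the $[r,s]$ bounds on $P\ten_AA^0$ and $Q\ten_AA^0$ do their work), and then takes $M=P$ to produce $[g]$ with $f^*[g]=[\id]$ and $M=Q$ to see it is two-sided. If you wish to salvage the tower route, you will end up reproducing essentially that analysis inside your induction; as written, the proposal does not yet constitute a proof.
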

\begin{proof}
Since $P$ is cofibrant, brutal truncation $\{\sigma^{\ge p}M\}_p$ of any $A$-module $M$ in cochain chain complexes $M$ induces surjections $\cHom_A(P, \sigma^{\ge i}M) \to  \cHom_A(P, \sigma^{\ge i-1}M) $ with kernels $\cHom_A(P, M^i)^{[-i]}$. The second  hypothesis on $P$ implies  
that $(\cHom_A(P, M^i)^{[-i]})^m$ is  acyclic for  $i>m+s$, thus giving a quasi-isomorphism
\[
 \cHom_A(P,M)^m \cong \Lim_n \cHom_A(P,\sigma^{\le n}M)^m \xra{\sim} \cHom_A(P,\sigma^{\le m+s}M)^m.
\] %%since acyclic surjections are trivial fibrations

Moreover, if $M^i=0$ for all $i<t$, then  $\cHom_{A}(P, \sigma^{\le m+s}M)^m=0$ for all $m<t-s$, %i.e. $m+s<t$
so $\cHom_A(P,M)^m \simeq 0 $ for all  $m<t-s$. Applying $\hatTot $ then gives
\[
 \hatHHom(P, M) \simeq \Tot^{\Pi}\sigma^{\ge t-s}\cHom( P,M). 
\]

Now, we have quasi-isomorphisms $\Tot \sigma^{\le s'}(P\ten_AA^0) \to \Tot \sigma^{\le s'}(Q\ten_AA^0)$  for all $s' \ge s$, giving quasi-isomorphisms $\Tot^{\Pi}\sigma^{\ge -s'}\cHom_{A}(Q, M^i)\to \Tot^{\Pi}\sigma^{\ge -s'}\cHom_{A}(P, M^i) $ for all $i$. Writing $M$ as the limit of the tower $ \ldots \to \sigma^{\le t+1}M \to \sigma^{\le t}M= (M^t)^{[-t]} $ and setting $s'= s+i-t$, these give a quasi-isomorphism 
\[
 \Tot^{\Pi}\sigma^{\ge t-s}\cHom_A( Q,M) \to \Tot^{\Pi}\sigma^{\ge t-s}\cHom_A( P,M).
\]

For all $M$ bounded below in the cochain direction, we therefore have quasi-isomorphisms
\[
f^* \co \hatHHom_A(Q, M)\to \hatHHom_A(P, M).
\]
The first hypothesis on $P$ implies that  $P\cong \sigma^{\ge r}P$, so it is bounded below. 
We may therefore take $M=P$, giving a class $[g] \in \H_0\hatHHom_A(Q, P)$ with $f^*[g]=[\id]$. Thus $[g]$ is inverse to $[f] \in \H_0\hatHHom_A(P, Q)$, so $f$ is indeed a $\hatHHom_A$-homotopy equivalence.

Finally, observe that the contravariant functor $\cHom_A(-,A)$ on $A$-modules is a $\cHom_A$-enriched functor, having natural maps
\[
 \cHom_A(M,N)\to \cHom_A(\cHom_A(N,A),\cHom_A(M,A))
\]
for all $A$-modules $M,N$ in chain cochain complexes, compatible with composition.
Applying $\hatTot $ then gives maps
\[
 \hatHHom_A(M,N)\to \hatHHom_A(\cHom_A(N,A),\cHom_A(M,A))
\]
compatible with composition, so $[f]$ and $[g]$ give rise to mutually inverse elements of 
\[
 \H_0\hatHHom_A(\cHom_A(Q,A),\cHom_A(P,A)) \quad \text{ and }\quad \H_0\hatHHom_A(\cHom_A(P,A),\cHom_A(Q,A)).\qedhere
\]
\end{proof}

If $D$ denotes an $[m]$-diagram $(A(0)  \xra{f_1} A(1) \xra{f_2} \ldots \xra{f_m} A(m))$ of cofibrant stacky CDGAs with all  $f_i$  
surjective, then adapting
 \cite[\S \ref{DQnonneg-Artindiagsn}]{DQnonneg} (cf. \S \ref{intBCsn}), gives a stacky involutive a.c. brace algebra $\C\C_{R, \oplus}(D)$ (resp. $\cD^{\poly}_{\oplus}(D)$) %and a $\bG_m$-equivariant $P_2^{ac}$-algebra $\Pol(D/R,0)$ 
 with restriction maps
\begin{align*}
%\Pol(A/R,0) \xla{\sim} &\Pol(D/R,0) \to  \Pol(B/R,0),\\ 
 \C\C_{R, \oplus}(D)\to \C\C_{R, \oplus}(u^*D)\\ 
 \cD^{\poly}_{R, \oplus}(D)\to \cD^{\poly}_{R, \oplus}(u^*D)
  \end{align*}
for all maps $u \co [m'] \to [m]$. For $\C$ the relevant category of stacky CDGAs, EFC-DGAs or $\C^{\infty}$-DGAs, these combine to give a functor on the Grothendieck construction $\int B\C_{c,\onto}$ of the nerve (cf. \S \ref{intBCsn}).

Similarly, setting 
\[
 \cPol(A,B;n):= \bigoplus_{p \ge 0}\cHom_A(\CoS_A^p((\Omega^1_{A,\boxempty})_{[-n-1]}),B), 
\]
 where $\Omega^1_{A,\boxempty}$ is $\Omega^1_{A/R}$ (resp. $\Omega^1_{A,\C^{\infty}}$, resp. $\Omega^1_{A/K,\EFC} $) and
 $
\cPol(D,n)$ is 
\[
\cPol(A(0),n)\by_{\cPol(A(0),A(1);n)}\cPol(A(1),n) \ldots  \by_{\cPol(A(m-1),A(m);n)}\cPol(A(m),n)
%
%\bigoplus_{p \ge 0} \cHom_A(\CoS_A^p((\Omega^1_{A/R})_{[-n-1]}),A),(\cHom_A(\CoS_A^p((\Omega^1_{A/R})_{[-n-1]}),A)\by_{\cHom_A(\CoS_A^p((\Omega^1_{A/R})_{[-n-1]}),B)}\cHom_B(\CoS_B^p((\Omega^1_{B/R})_{[-n-1]}),B)   )
%\cPol(D/R,0):= \bigoplus_{p \ge 0}(\cHom_A(\Omega^p_{A/R},A)\by_{\cHom_A(\Omega^p_{A/R},B)}\cHom_B(\Omega^p_{B/R},B)   )_{[p]}.
\]
gives a stacky $P_{n+2}^{ac}$-algebra with restriction maps
\[
%  \cPol(A,n)\la \cPol(D,n) \to \cPol(B,n).
\cPol(D,n) \to \cPol(u^*D,n)
\]
for all maps $u \co [m'] \to [m]$.

As in \cite[Definition \ref{DQnonneg-hfetdef}]{DQnonneg}, we say that a map $A \to B$ of stacky CDGAs is homotopy formally \'etale if the map
\[
 \{\Tot \sigma^{\le q} (\oL\Omega_{A}^1\ten_{A}^{\oL}B^0)\}_q \to \{\Tot \sigma^{\le q}(\oL\Omega_{B}^1\ten_B^{\oL}B^0)\}_q
\]
is a pro-quasi-isomorphism, where $\sigma^{\le q}$ denotes the brutal cotruncation. When the maps  $f_i$ are all homotopy formally \'etale, the  map 
$
 \cPol(A(0) \xra{f_1} A(1) \xra{f_2} \ldots \xra{f_n} A(n),n) \to \cPol(A(0),n)
$
induced by the inclusion $[0] \to [m]$ becomes a quasi-isomorphism on applying $\hatTot $, as shown in \cite[\S \ref{DQnonneg-Artindiagsn}]{DQnonneg}. However, it will not usually be a  levelwise filtered quasi-isomorphism. Crucially for us, a slightly stronger property than $\hatTot$-quasi-isomorphism holds, as we will see in Lemma \ref{hatHHomPollemma} below.

% 
% \begin{definition}\label{bdddef}
%  We say that a cochain chain complex $M$ is homologically bounded above (resp. below) in cochain degrees if the chain complexes $M^i$ are  acyclic for all $i\gg 0$ (resp. $i \ll 0$).  
% \end{definition}

\begin{definition}
Given a  stacky CDGA $A$ and $A$-modules $P,Q$ in cochain chain complexes, we define $\oR\hatHHom_A(P,Q)$ to be any of the quasi-isomorphic complexes $\hatHHom_A(\tilde{P},Q)$ given by replacing $P$ with a cofibrant  replacement $\tilde{P} \to P$  in the model structure of Lemma \ref{modellemma}; all choices are quasi-isomorphic because all objects are fibrant, so quasi-isomorphisms between cofibrant objects are homotopy equivalences in the strict model structure, inducing quasi-isomorphisms on $\cHom$ and hence on $\hatHHom$.

We say that a map $f \co P \to Q$ is an $\oR\hatHHom_A$-homotopy equivalence if the induced homology class $[f] \in \H_0 \oR\hatHHom_A(P,Q)$ has an inverse in $\H_0 \oR\hatHHom_A(Q,P)$. 
\end{definition}

\begin{lemma}\label{hatHHomPollemma} 
 Take a diagram $D= (A(0)  \xra{f_1} A(1) \xra{f_2} \ldots \xra{f_m} A(m))$ of cofibrant stacky $R$-CDGAs (resp. stacky EFC-DGAs, resp. stacky $\C^{\infty}$-DGAs) concentrated in non-negative cochain degrees with all  $f_i$  
surjective, such that 
\begin{enumerate}
 \item 
there exists $s \ge 0$ for which the  chain complexes $(\Omega^1_{A(i),\boxempty}\ten_{A(i)}A(i)^0)^r $ are acyclic for all $r >s$, and
 \item the maps $f_i$ are all homotopy formally \'etale in the sense that the maps
% \[
%  \{\Tot \sigma^{\le s} (\Omega_{A(i-1),\boxempty}^1\ten_{A(i-1)}A(i)^0)\}_s \to \{\Tot \sigma^{\le s}(\Omega_{A(i),\boxempty}^1\ten_{A(i)}A(i)^0)\}_s
% %\Tot\sigma^{\le s} (\Omega_{A}^1\ten_{A}B^0)\to \Tot\sigma^{\le s}(\Omega_{B}^1\ten_BB^0)
% \]
% are quasi-isomorphisms of pro-$A(i)^0$-modules (i.e. essentially levelwise quasi-isomorphisms in the sense of \cite[\S 2.1]{isaksenStrict}). 
\[
\Tot \sigma^{\le s} (\Omega_{A(i-1),\boxempty}^1\ten_{A(i-1)}A(i)^0) \to \Tot \sigma^{\le s}(\Omega_{A(i),\boxempty}^1\ten_{A(i)}A(i)^0)
%\Tot\sigma^{\le s} (\Omega_{A}^1\ten_{A}B^0)\to \Tot\sigma^{\le s}(\Omega_{B}^1\ten_BB^0)
\]
are quasi-isomorphisms of $A(i)^0$-modules.
\end{enumerate}

Then  the natural morphisms 
\begin{align*}
 \cW_i\cPol(D,n) &\to \cW_i\cPol(A(0),n) %%note that we've separated out weights, though direct sum will still inherit $\hatHHom$ structure. Without that, I think the lemma fails. With it, I struggle to judtify consequences.
\end{align*}
are all
%%levelwise $A$-linearly quasi-isomorphic to  $\hatHHom_A$-homotopy equivalences.
 $\oR\hatHHom_{A(0)}$-homotopy equivalences.
\end{lemma}
\begin{proof}
Since each $f_i$ is surjective and taking symmetric invariants is an exact functor, the question reduces to showing that for $A=A(i-1)$ and $B=A(i)$, the maps
\[
 \cHom_B((\Omega^1_{B,\boxempty})^{\ten_B p},B)\to \cHom_A((\Omega^1_{A,\boxempty})^{\ten_A p},B)\cong \cHom_B((\Omega^1_{A,\boxempty})^{\ten_A p}\ten_AB,B)
\]
are %%levelwise quasi-isomorphic to 
$\oR\hatHHom_{A(0)}$-homotopy equivalences for all $i>0$. 

The boundedness hypotheses ensure that the chain complexes  $((\Omega^1_{A,\boxempty})^{\ten p}\ten_AA^0)^i$ and $((\Omega^1_{B,\boxempty})^{\ten p}\ten_BB^0)^i$ are acyclic for $i>sp$. Combined with the 
homotopy formally \'etale hypothesis, this   ensures that the maps $  (\Omega^1_{A,\boxempty})^{\ten p}\ten_AB\to (\Omega^1_{B,\boxempty})^{\ten p}$
satisfy the conditions of 
 Lemma \ref{hatHHomlemma}, so are  $\hatHHom_B$-homotopy equivalences, and hence $\oR\hatHHom_{A(0)}$-homotopy equivalences \emph{a fortiori}. 
\end{proof}
 
\begin{lemma}\label{hatHHomOmegalemma}
 If $A \to B$ is a morphism of cofibrant $\bG_m$-equivariant stacky CDGAs with non-negative $\bG_m$-weights, such that $\cW_0A \to \cW_0B$ is a levelwise quasi-isomorphism and the morphisms $\cW_iA \to \cW_iB$ are  $\hatHHom_{\cW_0A}$-homotopy equivalences for all $i \ge 0$, then the morphisms
 \[
 \cW_i (\Omega^1_A\ten_A\cW_0A) \to  \cW_i (\Omega^1_B\ten_B\cW_0B)
 \]
are $\hatHHom_{\cW_0A}$-homotopy equivalences for all $i$. 
\end{lemma}
\begin{proof}
 Since $\cW_0A \to \cW_0B$ is a levelwise quasi-isomorphism, so too is $\Omega^1_{\cW_0A}\to  \Omega^1_{\cW_0B}$. Using the exact sequence $0 \to \Omega^1_{\cW_0A}\ten_{\cW_0A}A \to \Omega^1_A \to \Omega^1_{A/\cW_0A} \to 0$,  we can thus replace $\Omega^1_A$ and $\Omega^1_B$ with $\Omega^1_{A/\cW_0A}$ and $\Omega^1_{B/\cW_0B}$ to give an equivalent statement.
 
 Because $A$ is cofibrant and $\cW_{>0}A $ is the augmentation ideal of the morphism $A \to \cW_0A$, we have  $\Omega^1_{A/\cW_0A}\ten_A\cW_0A\cong (\cW_{>0}A)/((\cW_{>0}A)\cdot (\cW_{>0}A))$, so the Koszul resolution (equivalently, the commutative bar construction  as in the proof of Lemma \ref{polyvectorlemma}) gives a canonical $\cW_0A$-module resolution of $  \Omega^1_{A/\cW_0A}\ten_A\cW_0A$ equipped with an increasing filtration whose graded pieces are  $\Co\Lie_n(\cW_{>0}A_{[-1]})_{[1]}$. In weight $i$, this becomes a finite filtration only involving terms with $n \le i$, so it suffices to observe that the maps  $\cW_j\Co\Lie^n_{\cW_0A}(\cW_{>0}A_{[-1]}) \to \cW_j\Co\Lie^n_{\cW_0B}(\cW_{>0}B_{[-1]})$ are all $\hatHHom_{\cW_0A}$-homotopy equivalences because they are finite tensor expressions in the terms $\cW_kA, \cW_kB$.  
 \end{proof}

\subsection{Poisson \tps{$L_{\infty}$}{L-infinity}-morphisms in the Tate category} \label{LinftyTate}%%this new 

 For this section, we fix a chain CDGA $R$ over $\Q$ to act as our base ring.
 
We now introduce a variant of Definition \ref{hatTotdef} incorporating non-negatively weighted $\bG_m$-actions. 
\begin{definition}\label{cTdef}
 Define the non-negatively weighted Tate dg category $\cT^+_{R,dg}$ %over a chain CDGA $R$ 
 as follows. Objects are $\bG_m$-equivariant $R$-modules in chain cochain complexes for which the $\bG_m$-weights are non-negative. Morphisms are given by the complexes
 \[
  \cT^+_{R,dg}(M,N):=\prod_{i \ge 0}\hatHHom_R(\cW_iM,\cW_iN)
 \]
with the obvious composition rule.

The non-negatively  weighted Tate category $\cT^+_{R}$ is defined to have the same objects, but with morphisms $\z_0\cT^+_{R,dg}(M,N)$.
\end{definition}
Note that the tensor product defines a bifunctor on these categories, since $\hatTot$ commutes with finite direct sums, so
\[
 \cT^+_{R,dg}(M\ten_RM',P)= \prod_{i\ge 0,j\ge 0}\hatHHom_R(\cW_iM\ten_R\cW_jM',\cW_{i+j}P),
\]
meaning that the maps 
\[
\hatHHom_R(\cW_iM,\cW_iN)\ten_R\hatHHom_R(\cW_jM',\cW_jN') \to  \hatHHom_R(\cW_iM\ten_R\cW_jM',\cW_{i+j}(N\ten N'))
\]
coming from lax monoidality of $\hatTot$ induce natural maps
\[
\cT^+_{R,dg}(M,N)\ten_R \cT^+_{R,dg}(M',N') \to \cT^+_{R,dg}(M\ten_RM',N\ten_RN').
\]
Note that our hypothesis that the $\bG_m$-weights be non-negative is essential for this to hold, since otherwise the expression $\cW_n(M\ten_RM')= \bigoplus_{i+j=n} \cW_iM\ten_R\cW_jM'$ would not be finite.

Also observe that $\cT^+_{R,dg}$ has a dg-subcategory consisting of  $\bG_m$-equivariant $R$-linear morphisms of double complexes, which we refer to as strict morphisms, given by the subcomplex  $\prod_{i \ge 0}\z^0\cHom_R(\cW_iM,\cW_iN)\subset \cT^+_{R,dg}(M,N)$. 
\begin{definition}\label{cTdefrel}
Given a stacky CDGA $A$ equipped with a $\bG_m$-action of non-negative weights, we   define  $\cT^+_{A,dg}$ to be the dg category whose objects are $\bG_m$-equivariant $A$-modules in chain cochain complexes for which the $\bG_m$-weights are non-negative and whose complexes $\cT^+_{A,dg}(M,N)$ of morphisms are given by the natural equaliser 
\[
 \cT^+_{R,dg}(M,N) \implies \cT^+_{R,dg}(A \ten_R M,N)
\]
coming from the observations above.

The  category $\cT^+_{A}$ is then defined to have the same objects, but with morphisms $\z_0\cT^+_{A,dg}(M,N)$.
\end{definition}
Since the bifunctor $\hatHHom_R$ respects finite colimits in the first argument, for any morphism $A \to B$ of non-negatively weighted $\bG_m$-equivariant CDGAs, the forgetful functor $\cT^+_{B,dg}\to  \cT^+_{A,dg}$ then has  a left adjoint given by $M \mapsto M\ten_AB$.

Since $\cT^+_{R}$ does not contain arbitrary coproducts, and in particular since infinite direct sums are not coproducts there, we cannot directly adapt \S \ref{polyvectorsn}. Instead, in order to express our obstructions in terms of polyvectors  we have to enlarge the class of morphisms as follows. %%   $\hat{P}_k= \Com \circ (s^{1-k}\Lie)$, 

For now, fix a  non-negatively weighted $\bG_m$-equivariant chain CDGA  $S$, which in applications will be either $R[\hbar^2]$ or $R[\hbar^2]/\hbar^{2m}$.

\begin{definition}\label{PoissonLinftydef} 
Take a symmetric monoidal pre-triangulated dg category $\C$ equipped with a tensor functor from the dg category of $\bG_m$-representations, and take $P_k^{ac}$-algebras $A,B$ in $\C$; write $\hbar^p\co \C \to \C$ for the dg functor given by tensoring with the $\bG_m$-representation of weight $p$. 

We then define  Poisson $L_{\infty}$-morphisms from $A$ to $B$ to consist of sequences $f= (f_1,f_2, \ldots)$ with $f_p \in \HHom_{\C}(\Symm^p(A_{[1-k]}),\hbar^{p-1}B)_{p-k}$, such that
\begin{enumerate}
 \item   $f$ is an $L_{\infty}$-morphism (i.e. satisfies the formulae of \cite[Proposition 10.2.13]{lodayvalletteoperads}) and  %%hence $\Symm$ rather than $\CoS$ %%i.e. a Lie $\infty$-morphism in the sense of \cite[\S 10.2.2]{lodayvalletteoperads}, or see thm 10.2.6 and replace $\End_A^B$ with the $\bS$-module $\{\C(A^{\ten n},B)\}_n$. Better ref:  the formulae of \cite[Proposition 10.2.13]{lodayvalletteoperads} 
\item \label{PoissonLinftymult} $f$ satisfies the multiplicative property
\[
f_{n+1}(ab,x_1, \ldots,x_n)=\sum_{\substack{  p+q=n\\ \sigma \in \mathrm{Sh}_{p,q}}} \pm  f_{p+1}(a,x_{\sigma(1)}, \ldots, x_{\sigma(p)})f_{q+1} (b,x_{\sigma(p+1)}, \ldots, x_{\sigma(n)}),
\]
 where $\pm$ denotes the appropriate Koszul sign (see Remark \ref{voronovrmk})  and $\mathrm{Sh}_{p,q}$ the set of $(p,q)$-shuffle permutations.
  \end{enumerate}

When $A,B$ are  weighted $\bG_m$-equivariant stacky $P_k^{ac}$-algebras over %a chain CDGA 
  $S$, we write $\Tate_{P_k^{ac},S,L_{\infty}}(A,B)$ for the set  of   Poisson $L_{\infty}$-morphisms from $A$ to $B$ in $\cT^+_{S,dg}$.
  \end{definition}

Again, observe that the category from Lemma \ref{modellemmanonneg} arises as a subcategory of $\Tate_{P_k^{ac},R,L_{\infty}}(A,B)$, consisting of morphisms $f$ with $f_1 \in \z_0\z^0\cHom_R(A,B) \subset \cT_{R,dg}^+(A,B)$ and $f_p=0$ for all $p>1$; we refer to these as strict morphisms.

\begin{remark}\label{voronovrmk}
         Adapting \cite[Theorem 1]{voronovMicroformal}, a conceptual interpretation of condition (\ref{PoissonLinftymult}) in Definition \ref{PoissonLinftydef} is that the tangent map of the Maurer--Cartan map $\omega \mapsto \sum_n f_n(\omega, \ldots, \omega)/n!$  is universally multiplicative. Explicitly, for any pro-nilpotent CDGA $C$ and any element $\omega \in (A \ten C)_{-1}$ (not necessarily satisfying the Maurer--Cartan equation), % $\sum_{n \ge 1} [\omega,\ldots,\omega]_n/n!=0$,
condition (\ref{PoissonLinftymult}) implies that the $C$-linear map $\sum_{n \ge 0} f_{n+1}(-,\omega, \ldots, \omega)/n! \co A \ten C \to B \ten C$ is multiplicative. Condition (\ref{PoissonLinftymult}) can be explicitly recovered from this property by letting $C$ contain free independent variables $t_1, \ldots, t_n$ with $\deg(t_i)=-\deg(x_i)-1$, then setting $\omega = \sum x_i t_i$. 
        \end{remark}

\begin{lemma}\label{PoissonLinftycatlemma}
   Poisson $L_{\infty}$-morphisms %in $\cT^+_{S}$ 
   are closed under   composition of $L_{\infty}$-morphisms.
\end{lemma}
\begin{proof}
 This can be checked by direct substitution or as in  \cite[Theorems 3 and 6]{voronovMicroformal}.
 
 Alternatively, we can form  bar constructions with respect to the $P_k$ and Lie operads (cf. Lemma \ref{polyvectorlemma})  in the ind-category %$\ind(\cT^+_{S})$, 
 $\ind(\C)$,
 which does have infinite coproducts. An $L_{\infty}$-morphism $f$ corresponds to a 
 cocommutative coalgebra map  $\b_{\Lie}(f) \co   \b_{s^{1-k}\hbar^{-1}\Lie}A \to \b_{s^{1-k}\hbar^{-1}\Lie}B$ in the ind-category, with $f$ being given by restriction to cogenerators $B$. Since $\b_{s^{1-k}\hbar^{-1}\Lie}A$ is a quotient of the $P_k^{ac}$-coalgebra  $\b_{P_k^{ac}}A$, the same procedure 
 defines a map $\b_{P_k^{ac}}(f)\co \b_{P_k^{ac}}A \to \b_{P_k^{ac}}B$ of $\bG_m$-equivariant graded $P_k^{ac}$-coalgebras (i.e. a degree $0$ morphism in the dg category but not necessarily closed). The conditions  of Definition \ref{PoissonLinftydef}  are then precisely the evaluation on cogenerators $B$ of the condition that $\b_{P_k^{ac}}(f)$ be closed under the differential.
 
 Thus a Poisson $L_{\infty}$-morphism from $A$ to $B$ in $\C$ %$\cT^+_{S}$ 
 corresponds to a   $P_k^{ac}$-coalgebra map  $\b_{P_k^{ac}}A \to \b_{P_k^{ac}}B$ in 
 $\ind(\C)$
%  $\ind(\cT^+_{S})$ 
 whose co-restriction to the natural coalgebra quotient $ \b_{s^{1-k}\hbar^{-1}\Lie}B$ factors through the corresponding quotient $ \b_{s^{1-k}\hbar^{-1}\Lie}A $ of $\b_{P_k^{ac}}A$. This property is manifestly closed under composition.
 \end{proof}

\begin{definition}\label{hatDDerdef} 
 Given a $\bG_m$-equivariant   stacky $P_k^{ac}$-algebra $A$ over %a chain CDGA 
 $S$ and a Beck $A$-module $M$ in the strict category of $\bG_m$-equivariant  $S$-modules in  double complexes, define the chain complex $\hatDDer_{P_k^{ac},S,L_{\infty}}(A,M)$ to consist in degree $n$ of  $\bG_m$-equivariant Poisson $L_{\infty}$-derivations $A \to \cone(M)_{[n]}$ in $\cT^+_{S}$ , with differential $\delta \co \hatDDer_{P_k^{ac},S,L_{\infty}}(A,M)_n \to \hatDDer_{P_k^{ac},S,L_{\infty}}(A,M)_{n-1}$ induced by the obvious map $\cone(M)_{[n]} \to  \cone(M)_{[n-1]}$.
 \end{definition}

 The following acts as our substitute for Lemma \ref{polyvectorlemma} in the stacky setting:
\begin{lemma}\label{polyvectorlemma2}
Given a non-negatively weighted  $\bG_m$-equivariant stacky $P_k^{ac}$-algebra $A$ over %a chain CDGA 
$R$ which is  cofibrant as a $\bG_m$-equivariant $R$-CDGA, together with a Beck $A$-module $M$, there is a complete decreasing filtration $F^1 \supset F^2 \supset \ldots $ %%decreasing, presumably --- that makes sense, as it maps to comm derivations as quotient bu $F^1$
 on the complex  $\hatDDer_{P_k^{ac}, R,L_{\infty}}(A, M)$ with associated graded complexes
 \[
  \cT^+_A(\Symm_A^p((\Omega^1_{A/R})_{[-k]}),\hbar^{p-1}M)_{[-k]},
 \]
 where $\Omega^1$ denotes the double  complex of K\"ahler differentials of the underlying stacky CDGA.
\end{lemma}
\begin{proof}
 Unwinding  Definition \ref{PoissonLinftydef}, a $\bG_m$-equivariant  Poisson $L_{\infty}$-derivation in $\cT^+_R$ is an $L_{\infty}$-derivation $\theta$ for which the elements $\theta_p \in \cT^+_{R,dg}(\Symm^p(A_{[1-k]}),\hbar^{p-1}M)_{p-k}$ satisfy $\theta_{p}(ab,y_2, \ldots,y_p)= a\theta_p(b, y_2, \ldots, y_p)\pm \theta_p(a,y_2, \ldots, y_p)b$, which, since the operations are symmetric,  says precisely that they are multiderivations.

 Thus if we forget differentials we have an isomorphism of graded $R$-modules
\[
 \hatDDer_{P_k^{ac}, R,L_{\infty}}(A, M)_{\#} \cong \prod_{p \ge 1} \cT^+_{A,dg}(\Symm_A^p((\Omega^1_{A/R})_{[-k]}),\hbar^{p-1}M)_{[\#-k]},
\]
and we can then set $F^j$ to be the product of the terms with $p \ge j$. Since $ \hatDDer_{P_k^{ac}, R,L_{\infty}}(A, M)$ is a subcomplex of the complex of $L_{\infty}$-derivations, the $p$th component $(\delta \theta)_p$ is an expression in terms of $\theta_j$ for $j \le p$. If $\theta_p=0$ for all $p<j$, this implies that $(\delta \theta)_p=0$ for all $p<j$, so $\delta(F^j) \subset F^j$ and we have a filtration by subcomplexes. Moreover, $(\delta \theta)_j= \delta(\theta_j)$ under those conditions, so  $\delta \theta \in \delta(\theta_j) +F^{j+1}$, giving the required description of the associated graded complex. 
 \end{proof}

\begin{definition}
 Say that a morphism $A \to B$ of stacky  $\cP$-algebras %in the strict dg category of cochain chain complexes 
 is an abelian extension if it is surjective and if whenever a $\cP$-algebra operation on $A$ has more than one input in $\ker(A\to B)$, the output is zero.  Say that a morphism $A \to B$ is a nilpotent extension if it is a finite composition of abelian extensions.
 \end{definition}
 Note that the condition implies that $\ker(A \to B)$ is naturally a Beck $A$-module, and that this structure is induced by a Beck $B$-module structure. It also gives us a stacky  $\cP$-algebra isomorphism $A\by_BA \cong A\by_B(B\oplus \ker(A \to B))$. 
%%alternative characterisation which I won't menition: the map $A\by_BA\by_BA \to A\by_A$ of double complexes given by $(a,a',a'') \mapsto (a,a'+a''-a) $ is a $\cP$-algebra morphism

The following proposition is the technical key to this section:
\begin{proposition}\label{Tatedefprop} %%have tightened this so that $I$ is a Beck $A$-module in the strict dg category. At that point, I think we're OK.
 Take strict morphisms $B' \xra{g} B \xra{h} \bar{B} \xla{f} A$ of non-negatively weighted $\bG_m$-equivariant stacky $P_k^{ac}$-algebras over 
 %a chain CDGA 
 $S$, % in the strict dg category of cochain chain complexes,
 such that $g$ is an abelian extension with kernel $I$, $h$ is a nilpotent extension, and the natural Beck $B$-module structure on $I$ is induced by a Beck $\bar{B}$-module structure.

 We then have a short exact sequence
 \[
  0 \to \z_0 \hatDDer_{P_k^{ac},S,L_{\infty}}(A,f_*I)
  \to \Tate_{P_k^{ac},S,L_{\infty}}(A, B')_f \xra{g_*} \Tate_{P_k^{ac}.S,L_{\infty}}(A, B)_f
 \]
of groups and sets, where $(-)_f$ denotes the fibre over $ f \in \Tate_{P_k^{ac},S,L_{\infty}}(A, \bar{B})$.

Moreover, if $A$  is cofibrant as a stacky CDGA in the model structure of Lemma \ref{modellemma}, then the short exact sequence extends to a further term $o_g \co \Tate_{P_k^{ac},S,L_{\infty}}(A, B)_f \to \H_{-1} \hatDDer_{P_k^{ac},S,L_{\infty}}(A,f_*I) $, so  $o_g(\phi)=0$ if and only if $\phi$ lies in the image of $g_*$.
 \end{proposition}
\begin{proof}
Given two elements $\phi, \psi \in \Tate_{P_k^{ac},S,L_{\infty}}(A, B')_f$ with $g \circ \phi = g \circ \psi$, consider the element $\phi -\psi \in \prod_{p\ge 1} \cT_{S,dg}^+(\Symm^p_S(A_{[1-k]}),\hbar^{p-1}B)_{p-k}$. Since $g$ is an abelian extension with $I$ a Beck $\bar{B}$-module, we have an isomorphism
\begin{align*}
 B'\by_{\bar{B}}(\bar{B} \oplus I) &\xra{\cong} B'\by_BB'\\
(b, (\bar{b},x)) &\mapsto (b, b+x)
 \end{align*}
of $P_k^{ac}$-algebras. Noting that the inclusion functor from the strict dg category of double complexes to $\cT_{S,dg}^+$ preserves finite limits, it follows that $(f, \phi -\psi)\co A \to \bar{B} \oplus I$ defines a Poisson $L_{\infty}$-morphism of $\bG_m$-equivariant $P_k^{ac}$-algebras   $\cT_{S}^+$. Expanding this out in terms of Definition \ref{PoissonLinftydef}, this says precisely that $\phi - \psi \in \z_0 \hatDDer_{P_k^{ac},S,L_{\infty}}(A,f_*I)$, which gives the first statement.

For the second statement, we start with an intermediate lemma.
\begin{lemma*}
When $A$ is cofibrant as a stacky CDGA,  every element $ \rho \in \Tate_{P_k^{ac},S,L_{\infty}}(A, B)_f$ admits a lift  $\tilde{\rho}$ in $\prod_{p\ge 1} \cT^+_{S,dg}(\Symm^p_S(A_{[1-k]}),\hbar^{p-1}B')_{p-k}$ satisfying the multiplicativity property of Definition \ref{PoissonLinftydef}. 
\end{lemma*}
\begin{proof}[Proof of lemma]
Since this lifting property is preserved on passage to a retract,  we may assume that  $A$ is freely  generated as a commutative algebra by a trigraded (i.e. $\bG_m$-equivariant bigraded) $\Q$-vector space  $V= \bigoplus_i \cW_i V^{\#}_{\#}$; forgetting the differentials  gives $A^{\#}_{\#} \cong \bigoplus_n S\ten_{\Q} \Symm_{\Q}^n(V^{\#}_{\#})$. 

If we let $\alpha$ be the restriction of $\rho$ to $V$, we can then choose a lift $\tilde{\alpha} \in \prod_{p \ge 1}\cT^+_{\Q}(\Symm^p_{\Q}(V_{[1-k]}) ,\hbar^{p-1}B')_{p-k}$, since $B' \to B$ is surjective. Repeated application of the multiplicativity property then gives us elements $\tilde{\rho}_p$ of $\Tot^{\Pi} (\cHom_S(\Symm^p_S(A_{[1-k]}),\hbar^{p-1}B'))_{p-k}$ agreeing with $\tilde{\alpha}_p$ on generators $V\subset A$, but we need to check that each term lies in $\cT^+_{dg} \subset \Tot^{\Pi}\cHom$. 
The iterated multiplicativity property gives, for $r:= \sum_{j=1}^p r_j$ and  $L=r+1-p$, 
\begin{align*}
 &\tilde{\rho}_p(\Symm^{r_1}V\ten \ldots \ten\Symm^{r_p}V)\\ %\cW_{s_1}\Symm^{r_1}V, \ldots, \cW_{s_p}\Symm^{r_p}V) \\
&\subset \sum_{\substack{\sum_{l=1}^L m_l = r} } \tilde{\alpha}_{m_1}(V, \ldots,V)\cdots \tilde{\alpha}_{m_L}(V, \ldots, V),
\end{align*}
which in particular means that %$m_l + r-p \le r$ i.e. 
$m_l \le p$ for all $l$, so  $\tilde{\rho}_p$ depends only on $\{\tilde{\alpha}_m\}_{m \le p}$. 

Writing $\tilde{\alpha}_{p,i}$ for the component of $\tilde{\alpha}$  in $\hatHHom_{\Q}(\cW_i\Symm^p_{\Q}(V_{[1-k]}),\cW_{i-p+1}B')_{p-k}$, and $\tilde{\rho}_{p,i}$ for the restriction of $\tilde{\alpha}_p$ to $\cW_i$, we moreover have that $\tilde{\rho}_{p,i}$ depends only on the finite set $\{\tilde{\alpha}_{m,j}\}_{m \le p, j \le i}$. Since the set is finite, by definition of $\hatHHom$ there  exists an integer $N$ such that $\tilde{\alpha}_{m,j}$ lies in cochain degrees $[-N, \infty)$ for all $m\le p, j\le i$.

We can decompose $\tilde{\alpha}_{m,j}$ as $ \tilde{\alpha}_{m,j}^+ + \tilde{\alpha}_{m,j}^-$ into terms of non-negative and negative cochain degrees. 
 Since $f$ is a strict morphism, the image of $ \tilde{\alpha}_{m,j}^-$ must lie in the kernel $J'$ of the nilpotent surjection $B' \to \bar{B}$. If $n$ is the index of nilpotence of $J'$, then at most $n-1$ of the terms $ \tilde{\alpha}_{m,j}^-$ can feature in a product to give a non-zero contribution. Thus the cochain degree of $\tilde{\rho}_{p,i}$ is bounded below by $-(n-1)N$ (crucially independent of $r$), so  $\tilde{\rho}_{p,i} \in  \hatHHom_S(\cW_iSymm^p_S(A_{[1-k]}),\cW_{i+1-p} B')_{p-k}$, proving the lemma.
\end{proof}
To complete the proof of the proposition, we 
can now  proceed by a standard obstruction argument.

Given a morphism $\rho\in \Tate_{P_k^{ac},S,L_{\infty}}(A, B)_f$, we choose a lift $\tilde{\rho}$ as in the lemma. This will be an $L_{\infty}$ morphism if and only if the induced  coalgebra morphism  $\b_{\Lie}(\tilde{\rho})\co \b_{s^{1-k}\Lie}A \to \b_{s^{1-k}\Lie}B'$ as in the proof of Lemma \ref{PoissonLinftycatlemma} commutes with $\pd \pm \delta$. The resulting commutator $\kappa(\tilde{\rho}) \in \prod_p  \cT^+_{S,dg}(\Symm^p_S(A_{[1-k]}),\hbar^{p-1}B')_{p-k-1}$ is thus the obstruction to $\tilde{\rho}$ being an $L_{\infty}$ morphism. Since its projection $\kappa(\rho)$ to $B$ is $0$, %which is zero because $\rho$ is an $L_{\infty}$ derivation, 
it follows that $\kappa(\tilde{\rho})$ is an   $L_{\infty}$ derivation from $A$ to $I$. Because $B' \to B$ is an abelian extension, % and $(\pd \pm\delta) \circ (\pd \pm \delta) =0$, %%that, I think, belongs in the $L_{\infty}$ part
it moreover follows that  
 $\kappa(\tilde{\rho})$ is a Poisson $L_{\infty}$ derivation  and that the Poisson $L_{\infty}$ $A$-module structure on $I$ is that induced from $\rho \co A \to B$,  so %and hence by hypothesis from $f \co A \to \bar{B}$
\[
\kappa(\tilde{\rho}) \in \z_{-1}\hatDDer_{P_k^{ac},S,L_{\infty}}(A,f_*I).
\]
Any other lift of $\rho$ is of the form $\tilde{\rho} + \theta$ for $\theta \in \hatDDer _{P_k^{ac},S,L_{\infty}}(A,f_*I)_0$, since $B' \to B$ is an abelian extension. Then $\kappa(\tilde{\rho}+\theta)= \kappa(\tilde{\rho})+ [\pd \pm \delta,\theta]$, the abelian property killing all other terms,  so there exists a lift of $\rho$ to $\Tate_{P_k^{ac},S,L_{\infty}}(A, B')$ if and only if $\kappa(\tilde{\rho})$ is a boundary, i.e. whenever $o_g(\rho):= [\kappa(\tilde{\rho})]$ is $0$  in $\H_{-1}\hatDDer _{P_k^{ac},S,L_{\infty}}(A,f_*I)$.
\end{proof}

\begin{remark}
The cofibrancy condition in Proposition \ref{Tatedefprop} is stronger than the proof uses, since it suffices for  $A$ to be a retract of a quasi-free object. That is an instance  of a general phenomenon that deformation theory works most naturally with derived categories of the second kind.
\end{remark}

\begin{definition} 
 Define the category $\cU_{P_k^{ac}[\hbar^2]/\hbar^{2m}, R}$ (resp. $\cU_{P_k^{ac}[\hbar^2] , R}$) as follows. Objects are non-negatively weighted $\bG_m$-equivariant stacky $P_k^{ac}\ten_{\Q}R [\hbar^2]/\hbar^{2m}$-algebras (resp. $P_k^{ac}\ten_{\Q}R[\hbar^2]$-algebras) which are levelwise flat over $R [\hbar^2]/\hbar^{2m}$ (resp. $R[\hbar^2]$).
 
 Morphisms from $A$ to $B$ then consist of those elements of $\Tate_{P_k^{ac},R[\hbar^2]/\hbar^{2m} ,L_{\infty}}(A,B)$
 (resp. $\Tate_{P_k^{ac},R[\hbar^2],L_{\infty}}(A,B)$) which are strict morphisms modulo $\hbar^{2}$, i.e. morphisms of $\bG_m$-equivariant %stacky 
 $P_k^{ac}\ten_{\Q}R$-algebras
 in the strict category of double complexes.
 %(morphisms of cochain degree $0$ with no higher $L_{\infty}$ terms).  
 
 We then say that a morphism $A \to B$ is a weak equivalence if the induced (strict) morphism $A/\hbar^{2} \to B/\hbar^{2}$ is a levelwise quasi-isomorphism, i,e. induces isomorphisms $\H_i(A^j/\hbar^{2}) \to \H^i(B^j/\hbar^{2})$ for all $i,j$. 
\end{definition}
In particular, note that $\cU_{P_k^{ac},R} =\cU_{P_k^{ac}[\hbar^2]/\hbar^{2},R}$ is the strict category of non-negatively weighted $\bG_m$-equivariant stacky  $ P_k^{ac}\ten_{\Q}R$-algebras, i.e. the category with morphisms of cochain degree $0$ and with no higher $L_{\infty}$ terms.

Also observe that since $\cW_i(\hbar^{2m}B)=0$ for all $i<2m$, we have $\hatHHom_{R[\hbar^2]}(\cW_iA,\cW_iB)= \Lim_m \hatHHom_{R[\hbar^2]}(\cW_iA,\cW_i(B/\hbar^{2m}))$, so substitution in Definition \ref{cTdef} gives $\cT^+_{R[\hbar^2],dg}(A,B)\cong\Lim_m \cT^+_{R[\hbar^2]/\hbar^{2m},dg}(A/\hbar^{2m},B/\hbar^{2m})$ and thus 
\[
\cU_{P_k^{ac}[\hbar^2] , R}(A,B) \cong\Lim_m\cU_{P_k^{ac}[\hbar^2]/\hbar^{2m},R}(A/\hbar^{2m},B/\hbar^{2m}).
\]

\begin{definition}
 Letting $\cU$ be either of the categories $ \cU_{P_k^{ac}[\hbar^2]/\hbar^{2m},R}$ or $\cU_{P_k^{ac}[\hbar^2],R}$, and taking objects $A,B \in \cU$, define the simplicial set $\uline{\cU}(A,B)$ to be given by
 \[
 n \mapsto \cU(A, B\ten \Omega(\Delta^n)_{\bt}),
\]
 for the CDGA $\Omega(\Delta^n)_{\bt}$ of polynomial de Rham forms as in Definition \ref{mcPLdef}.
 \end{definition}

\begin{corollary}\label{Tatedefcor} %%I think this is OK now
Given  $A,B \in \cU_{P_k^{ac}[\hbar^2]/\hbar^{2m+2},R}$, with $A$ cofibrant in the model structure on  $\bG_m$-equivariant $P_k^{ac} \ten R[\hbar^2]/\hbar^{2m+2}$-algebras from Lemma \ref{modellemma},
the map
\[
\theta \co \uline{\cU}_{P_k^{ac}[\hbar^2]/\hbar^{2m+2},R} (A,B) \to \uline{\cU}_{P_k^{ac}[\hbar^2]/\hbar^{2m},R}(A/\hbar^{2m},B/\hbar^{2m})
\]
is a Kan fibration of simplicial sets. 

Moreover, for each element $f \in \uline{\cU}_{P_k^{ac}[\hbar^2]/\hbar^{2m},R}(A/\hbar^{2m},B/\hbar^{2m})$, there is a functorial obstruction $o(f) \in \H_{-1} \hatDDer_{P_k^{ac},R ,L_{\infty}}(A/\hbar^2,f_*\hbar^{2m}(B/\hbar^2)) $ which vanishes if and only if  the fibre $\theta^{-1}(f)$  is non-empty. If $o(f)$ vanishes, then  the homotopy group $\pi_i(\theta^{-1}(f))$  is naturally a torsor for $\H_i \hatDDer_{P_k^{ac}, \cT^+_R, L_{\infty}}(A/\hbar^2,f_*\hbar^{2m}(B/\hbar^2)) $. 
\end{corollary}
\begin{proof}
These are  fairly standard consequences of Proposition \ref{Tatedefprop}.
Since finite limits behave well in $\Tate_{P_k^{ac}[\hbar^2]/\hbar^{2m},R,L_{\infty}}$,   for any finite simplicial set $K$ we have 
\[
\Hom_{s\Set}(K, \uline{\cU}_{P_k^{ac}[\hbar^2]/\hbar^{2m},R}(A,B))\cong \cU_{P_k^{ac}[\hbar^2]/\hbar^{2m},R}(A,B \ten \Omega(K)_{\bt}),
\]
where $\Omega^n(K):= \Hom_{s\Set}(K, \Omega^n(\Delta^{\bt}))$. 

Letting $S:=R[\hbar^2]/\hbar^{2m+2}$,  the relative horn-filling condition for $\theta$ then amounts to surjectivity of 
\begin{align*}
  &\Tate_{P_k^{ac},S,L_{\infty}}(A,B \ten \Omega(\Delta^n)_{\bt})_{F} \to \\
  &\Tate_{P_k^{ac},S,L_{\infty} }(A,(B\ten \Omega(\Delta^n)_{\bt}/\hbar^{2m})\by_{B\ten \Omega(\L^{n,k})_{\bt}/\hbar^{2m}}  B\ten \Omega(\L^{n,k})_{\bt} )_{F},
  \end{align*}
for all $F \in \uline{\cU}_{P_k^{ac},R}(A/\hbar^2,B/\hbar^2)_n \subset \Tate_{ P_k^{ac},S,L_{\infty}}(A,B \ten \Omega(\Delta^n)_{\bt}/\hbar^2 )$, where the subscript denotes the fibre over $F$.  

Now, the map 
\[
B\ten \Omega(\Delta^n)_{\bt} \to  (B\ten \Omega(\Delta^n)_{\bt}/\hbar^{2m})\by_{B\ten \Omega(\L^{n,k})_{\bt}/\hbar^{2m}}  B\ten \Omega(\L^{n,k})_{\bt}
\]
is an abelian extension with kernel $\hbar^{2m}(B/\hbar^2)  \ten \ker( \Omega(\Delta^n)_{\bt} \to \Omega(\L^{n,k})_{\bt} ) $. Since the latter complex is acyclic, the obstruction of Proposition \ref{Tatedefprop} vanishes, giving the required surjectivity for $\theta$ to be a Kan fibration.

The obstruction $o(f)$ is then given by applying Proposition \ref{Tatedefprop} to the abelian extension $B \to B/\hbar^{2m}$ of  $P_k^{ac} \ten R[\hbar^2]/\hbar^{2m+2}$-algebras. Applying Proposition \ref{Tatedefprop}  to the extensions $B \ten \Omega(\Delta^n)_{\bt} \to B\ten \Omega(\Delta^n)_{\bt}/\hbar^{2m}$ and writing $V(n) :=  \hatDDer_{P_k^{ac},R,L_{\infty}}(A/\hbar^2,f_* \hbar^{2m}(B/\hbar^2) \ten \Omega(\Delta^n)_{\bt})$  gives us 
a faithful transitive action of   $\z_0 V(n) $
on $\theta^{-1}(f)_n$ for all $n$, and hence of $\pi_i \z_0V(-)$ on $\pi_i\theta^{-1}(f)$.  Since $A$ is cofibrant, the simplicial chain complex $V(-)$ is a Reedy fibrant replacement of  the chain complex $V(0)$, so the simplicial abelian group $\z_0V(-)$ has homotopy groups $\pi_i \z_0V(-)$ isomorphic to $\H_iV(0)$, as required.
%%to prove that, simplicially normalise and then Reedy fibrant says rows are acyclic, and replacement says all columns beyond the $0$th also acyclic; then rows imply $\z_0V(\bt) \xra{\sim} \Tot \tau_{\ge 0}V(\bt)$, while columns give $\tau_{\ge 0}V(0) \xra{\sim}\Tot \tau_{\ge 0}V(\bt)$. 
% inclusion of the top-dimesnional form gives a quasi-isomorphism  $\Q_{[n]} \to  \ker( \Omega(\Delta^n)_{\bt} \to \Omega(\pd \Delta^n)_{\bt} )$.
\end{proof}

\begin{corollary}\label{Tatehomcor} 
For   $\cU:= \cU_{P_k^{ac}[\hbar^2]/\hbar^{2m}, R}$ (resp. $ \cU:= \cU_{P_k^{ac}[\hbar^2], R}$), with $A, B \in \cU$ such that  $A$ is cofibrant in the model structure on  $\bG_m$-equivariant stacky $P_k^{ac} \ten R[\hbar^2]/\hbar^{2m}$-algebras (resp. $P_k^{ac} \ten R[\hbar^2]$-algebras) from Lemma \ref{modellemmanonneg}, the simplicial set 
$\uline{\cU}(A,B)$
is a model for the derived mapping space
\[
 \oR\map_{\cU}(A,B)
\]
of morphisms from $A$ to $B$ in the simplicial localisation $L^{\cW}\cU$ of $\cU$  
at weak equivalences.
 \end{corollary}
\begin{proof}
We first show by induction on $m$ that the functor $\uline{\cU}(-,-)$ preserves weak equivalences in both factors provided we restrict to cofibrant $P_k^{ac}[\hbar^2]/\hbar^{2m}$-algebras in the first factor.  For $m=1$, this follows from  \cite[\S 5]{hovey}, since it is a right function complex with respect to the model structure of Lemma \ref{modellemmanonneg}. For the natural maps
 \[
  \uline{\cU}_{P_k^{ac}[\hbar^2]/\hbar^{2m+2}, R}(A,B) \to \uline{\cU}_{P_k^{ac}[\hbar^2]/\hbar^{2m}, R }(A/\hbar^{2m} ,B/\hbar^{2m}),
 \]
the description of Corollary \ref{Tatedefcor} implies that the homotopy fibres are invariant under weak equivalences in $A$ and $B$, %so by induction $  \uline{\cU}_{P_k^{ac}[\hbar^2]/\hbar^{2k+2}, R}(A/\hbar^{2k+2},-)$ preserves weak equivalences, as required. 
which gives the inductive step. The case  $ \cU:= \cU_{P_k^{ac}[\hbar^2], R}$ then follows by taking the limit over $m$; since all maps are fibrations, the limit is a homotopy limit.

In the model category of Lemma \ref{modellemmanonneg}, there exists a Reedy cofibrant cosimplicial frame  $\tilde{A}^{\bt}$ of $A$ \cite[\S 5.4]{hovey}, leading to a simplicial set $n \mapsto \cU(\tilde{A}^n,B)$. The invariance above, together with an identical argument that $\cU(\tilde{A}^{\bt},-)$ preserves weak equivalences, then gives us weak equivalences
\[
 \cU(\tilde{A}^{\bt},B) \to \diag \uline{\cU}(\tilde{A}^{\bt},B) \la \uline{\cU}(A,B)
\]
of simplicial sets. In the  model category of restricted diagrams from \cite[\S 2.3.2]{hag1}, the natural morphisms $ \cU(A,-)\to  \cU(\tilde{A}^n, -) $ are all weak equivalences, since the maps $\tilde{A}^n \to A$ are weak equivalences in $ \cU$. Because $ \cU(\tilde{A}^{\bt},B)$ is a model for
 $ \ho\LLim_{n \in \Delta^{\op}} \cU(\tilde{A}^n, -) $, the natural map  $\cU(A, -) \to  \cU(\tilde{A}^{\bt}, -)$ is also a weak equivalence in the category of restricted diagrams. The result now follows by \cite{DKEquivsHtpyDiagrams}, as interpreted in  \cite[Theorem  2.3.5]{hag1}, since we have shown that $\cU(\tilde{A}^{\bt}, -)\simeq \uline{\cU}(A,-)$ preserves weak equivalences. 
\end{proof}

\subsection{Uniqueness results for deformations}

\subsubsection{Uniqueness for stacky $P_k^{ac}[\hbar^2]$-algebras}

\begin{corollary}\label{defPprop2cor} %this is {defPprop} slightly adapted at this stage 
Take $A, B \in \cU_{ P_k^{ac}[\hbar^2], R}$ such that  $A$ is cofibrant in the model structure on  $\bG_m$-equivariant stacky  $P_k^{ac} \ten_{\Q} R[\hbar^2]$-algebras from Lemma \ref{modellemmanonneg}. If 
 \[
\H_i \hatDDer_{P_k^{ac},R,L_{\infty}}(A/\hbar^2A, \hbar^{2n}f_*(B/\hbar^2B) )\cong 0  
 \]
for all $i \ge -1$ and $n\ge 1$,  and all  morphisms $f \co A/\hbar^2A \to B/\hbar^2B$ in $\cU_{P_k^{ac},R}$, then   the natural map
\[
 \uline{\cU}_{P_k^{ac}[\hbar^2], R}(A,B)   \to \uline{\cU}_{P_k^{ac},R}(A/\hbar^2A, B/\hbar^2B)
\]
is a trivial fibration.  
 \end{corollary} 
\begin{proof}
Under these hypotheses, Corollary \ref{Tatedefcor} gives  fibrations
\[
\uline{\cU}_{P_k^{ac}[\hbar^2]/\hbar^{2n+2}, R}(A/\hbar^{2n+2},B/\hbar^{2n+2}) \to \uline{\cU}_{P_k^{ac}[\hbar^2]/\hbar^{2n},R}(A/\hbar^{2n},B/\hbar^{2n})
\]
for all $n \ge 1$, and shows that their fibres are contractible, making them trivial fibrations. The result then follows by taking the limit over all $n$.
\end{proof}

\begin{definition}\label{hatTotbGmdef}
Define the lax monoidal dg functor 
 $\hatTot_{\bG_m}$ from  $\bG_m$-equivariant double complexes to $\bG_m$-equivariant chain complexes by $\hatTot_{\bG_m}A := \bigoplus_i\hatTot\cW_iA$.
 \end{definition}

\begin{lemma}\label{hatTotUlemma}
There is an $\infty$-functor $\hatTot_{\bG_m}$ from 
the $\infty$-category  of non-negatively weighted $\bG_m$-equivariant stacky $P_k^{ac}\ten_{\Q}R[\hbar^2]$-algebras to the  $\infty$-category  of $\bG_m$-equivariant $P_k^{ac}\ten_{\Q}R[\hbar^2]$-algebras, and this extends to an $\infty$-functor on $L^{\cW}\cU_{P_k^{ac}[\hbar^2], R}$.
\end{lemma} 
\begin{proof}
Since $\hatTot$ is lax monoidal,  $\hatTot_{\bG_m}$ defines a functor from $\bG_m$-equivariant stacky $\cP$-algebras to  $\bG_m$-equivariant $\cP$-algebras for all chain operads $\cP$, and this automatically yields an $\infty$-functor because it preserves weak equivalences, sending levelwise quasi-isomorphisms to quasi-isomorphisms.

Similarly, $\hatTot_{\bG_m}$ defines a functor from 
$\cU_{P_k^{ac}[\hbar^2], R}$ to the category of $\bG_m$-equivariant Poisson $L_{\infty}$-morphisms between   $\bG_m$-equivariant $P_k^{ac}\ten_{\Q} R [\hbar^2]$-algebras. As in the proof of Lemma \ref{PoissonLinftycatlemma}, these are \emph{a fortiori} $R [\hbar^2]$-linear $(P_k)_{\infty}$-morphisms, i.e. $\infty$-morphisms in the sense of  \cite[\S 10.2.2]{lodayvalletteoperads}.

Composing with the $R[\hbar^2]$-linear analogue of the rectification functor $\Omega_{P_k^{ac}}\b_{P_k^{ac}}$ of \cite[Theorem 11.4.7]{lodayvalletteoperads} thus gives us a functor $F:= \Omega_{P_k^{ac}}\b_{P_k^{ac}}\hatTot_{\bG_m} $ on $\cU_{P_k^{ac}[\hbar^2], R}$,  with the  restriction of $F$ to the category of strict morphisms admitting a natural quasi-isomorphism  from  $\hatTot_{\bG_m}$. 
In order to show that $F$ induces the required $\infty$-functor, it suffices to show that it preserves weak equivalences. By  the $R[\hbar^2]$-linear analogue of \cite[Proposition 11.4.11]{lodayvalletteoperads}, that amounts to showing that the functor $\hatTot_{\bG_m}$ sends weak equivalences to $\infty$-quasi-isomorphisms (i.e. $(P_k)_{\infty}$-morphisms whose first component is a quasi-isomorphism).

 If $f \co A \to B$ is a weak equivalence in $\cU$, then $f_1 \co A/\hbar^2 \to B/\hbar^2$ is a levelwise quasi-isomorphism, so $f_1 \co  \hatTot \cW_i(A/\hbar^2) \to \hatTot \cW_i(B/\hbar^2)$ is a quasi-isomorphism.  Now, the flatness hypotheses on objects  of $\cU_{P_k^{ac}[\hbar^2], R}$ ensure that the maps $\hbar^{2k} \co (A/\hbar^2) \to (\hbar^{2k}A/\hbar^{2k+2}A)$ are isomorphisms and similarly for $B$, so by induction we see that $f_1 \co \hatTot \cW_i(A/\hbar^{2k}) \to \hatTot \cW_i(B/\hbar^{2k})$ is a quasi-isomorphism for all $k$. Taking $k>i/2$ then shows that the map $f_1 \co \hatTot \cW_iA \to \hatTot \cW_iB$ is  a quasi-isomorphism for all $i$. %, as required.
\end{proof}

\begin{definition}
 Given a non-negatively weighted $\bG_m$-equivariant   stacky $P_k^{ac}$-algebra $A$ over %a chain CDGA 
 $R$ and a Beck $A$-module $M$ in the strict category of $\bG_m$-equivariant    double complexes, define  $\oR\hatDDer_{P_k^{ac},R,L_{\infty}}(A,M):= \hatDDer_{P_k^{ac},R,L_{\infty}}(\tilde{A},M)$ for any cofibrant replacement $\tilde{A}$ of $A$ as a $\bG_m$-equivariant   stacky $P_k^{ac}$-algebra; note that by Lemma \ref{polyvectorlemma2} this is well-defined up to quasi-isomorphism because Lemma \ref{modellemmanonneg} implies that $\tilde{A}$ is cofibrant as a stacky CDGA.
 \end{definition}

%% here's a filtration way to phrase the next. Don't do that, because {defPcor2} does that anyway
%  The functors 
% \[
%  A \mapsto \LLim_i \hatTot W_iA   \quad \text{ and }\quad 
%  A \mapsto  \bigoplus_i  \hatTot  \gr^W_iA
% \]
%  from the $\infty$-category of quasi-involutive stacky  a.c. $P_k$-algebras to the $\infty$-category of quasi-involutive  a.c. $P_k$-algebras become naturally equivalent when restricted to objects $A$ satisfying $W_{-1}A= 0$ and 
%  \begin{enumerate}
%  \item[($\dagger$)] $\oR\hatDDer_{ P_k^{ac},R,L_{\infty}}(\gr^WA , M) \simeq 0$ for all $\bG_m$-equivariant Beck $\gr^WA$-modules $M$  of pure $\bG_m$-weight $\ge 2$. 
%  \end{enumerate}

 %%
 
\begin{theorem}\label{defPcor2thm} 
For $\bar{A}:=(A/\hbar^2A)$, the functors 
\[
 A \mapsto \bigoplus_i  \hatTot  \cW_iA\quad \text{ and }\quad 
 A \mapsto  \bigoplus_i  \hatTot  \cW_i( \bar{A}[\hbar^2])
\]
from the $\infty$-category  of non-negatively weighted $\bG_m$-equivariant stacky $P_k^{ac}\ten_{\Q}R[\hbar^2]$-algebras to the  $\infty$-category  of $\bG_m$-equivariant $P_k^{ac}\ten_{\Q}R[\hbar^2]$-algebras become naturally equivalent when restricted to 
objects $A$ which are flat over $R[\hbar^2]$ and satisfy
\begin{enumerate}
      \item[($\dagger$)] $\oR\hatDDer_{ P_k^{ac},R,L_{\infty}}(\bar{A} , M) \simeq 0$ for all Beck $\bar{A}$-modules $M$  of pure $\bG_m$-weight $\ge 2$. 
   \end{enumerate}
  \end{theorem}
  \begin{proof}
  We adapt the proof of Corollary \ref{defPcor}. As a preliminary, note that for all $P,N \in \cT^+_A$, we have $\cT^+_A(P,N) \cong \Lim_r \cT^+_A(P,N/\cW_{>r}N) $  essentially by construction. %(reduce to $\cT^+_R$ and take limits)  
  Since $\bar{A}$ is cofibrant, we can therefore deduce by a filtration argument that $\oR\hatDDer_{ P_k^{ac},R,L_{\infty}}(\bar{A} , N) \simeq 0$ whenever  $\oR\hatDDer_{ P_k^{ac},R,L_{\infty}}(\bar{A} , \cW_rN) \simeq 0$ for all $r$, which the condition ($\dagger$) above ensures  for all Beck $\bar{A}$-modules $N$ with $\cW_iN = 0$ for all $i<2$.

 In particular, whenever $A$ satisfies  the condition  ($\dagger$) and $B$ is non-negatively weighted, we have  $\oR\hatDDer_{P_k^{ac}, R,L_{\infty}}(\bar{A}, \hbar^{2i}\bar{B})\simeq 0$ for all $i\ge 1$, so 
 Corollary \ref{defPprop2cor} implies that the map 
$\uline{\cU}_{P_k^{ac}[\hbar^2], R}(A,B)   \to \uline{\cU}_{P_k^{ac}, R}(A/\hbar^2A, B/\hbar^2B)$ is a weak equivalence. 
By Corollary \ref{Tatehomcor}, this means that the $\infty$-functor $A \mapsto A/\hbar^2A$ from $L^{\cW}\cU_{P_k^{ac}[\hbar^2], R}$ to $ L^{\cW}\cU_{P_k^{ac},R}$ becomes full and faithful when restricted to objects satisfying ($\dagger$) above. Since the functor $C \mapsto C[\hbar^2]$ is right inverse to reduction mod $\hbar^2$, it follows that the identity functor is naturally equivalent to the functor $A \mapsto (A/\hbar^2A)[\hbar^2]$ on this $\infty$-subcategory $(L^{\cW}\cU_{P_k^{ac}[\hbar^2], R})^{\%}$.

Since the $\infty$-functor $\hatTot_{\bG_m}$ factors through $(L^{\cW}\cU_{P_k^{ac}[\hbar^2], R})$ by Lemma \ref{hatTotUlemma}, composition
gives us the required equivalence $\hatTot_{\bG_m}A \simeq \hatTot_{\bG_m}((A/\hbar^2A)[\hbar^2])$, natural in  objects $A$ satisfying ($\dagger$).
     \end{proof}

     %% I've deleted {hatHHomDDerlemma} because it just follows immediatly from {polyvectorlemma2} and {hatHHomOmegalemma} now, for the weakened cdn we need in {defPcor2thm}

% We now adapt \cite[\S \ref{poisson-Artinsn}]{poisson} to develop a notion of almost commutative shifted Poisson structures on a stacky CDGA $B$, weaker than homotopy  $P_2$-algebra structures on $B$ with respect to the strict dg category structure on cochain chain complexes, but stronger than  homotopy  $P_2$-algebra structures on $\hatTot  B$. 

\subsubsection{Uniqueness for involutive a.c. stacky $P_k$-algebras}
%%I've just commented the next out, as it doesn't really work here:
% 

Adapting Definitions \ref{involutivelyfiltereddef} and \ref{involutivePacdef}, we have:
\begin{definition}\label{involutivePacdef2}
We say that a cochain chain complex $V_{\bt}^{\bt}$ is quasi-involutively filtered if it is equipped with a filtration $W$ by double subcomplexes and an involution $e$ which preserves $W$ and acts on $\H_*(\gr^W_iV^j)$ as multiplication by $(-1)^i$ for all $j$.

Define a  quasi-involutive a.c. stacky $P_k$-algebra over %a chain CDGA 
$R$ to be an  $(R \ten P_k^{ac},W,e)$-algebra $A$ in 
quasi-involutively  filtered cochain  chain complexes.
% Note that $P_k$ is a chain operad, so the operations take the form 
% \[
% R_t\ten P_k(n)_i\ten A_{j_1}^{m_1}\ten \ldots \ten  A_{j_n}^{m_n} \to A_{t+i+(j_1+\ldots +j_n)}^{m_1+\ldots +m_n}.
% \]
\end{definition}

We have the following immediate analogue of Lemma \ref{reeslemmaP}: 
\begin{lemma}\label{reeslemmaP2}%% added %% have removed completeness aspects  
%  Given a $\bG_m$-equivariant operad $\cP$ over $\Q$, 
The Rees functor of Definition \ref{reesdef} gives an equivalence of $\infty$-categories from the category of %exhaustive 
   quasi-involutive a.c. stacky $P_k$-algebras over a chain CDGA $R$,
  localised at %morphisms $(A,W) \to (B,W)$ with $\H_*(W_iA^j) \cong \H_*(W_iB^j)$  
  filtered levelwise quasi-isomorphisms, to the $\infty$-category of  $\bG_m$-equivariant  $R \ten_{\Q} P_k^{ac} $-algebras in cochain chain complexes of flat  $\Q[\hbar^2]$-modules,   localised at levelwise quasi-isomorphisms. 
  \end{lemma}

Applying Lemmas \ref{reeslemmaP} and \ref{reeslemmaP2} and taking functorial cofibrant replacement, Theorem \ref{defPcor2thm} immediately gives the following:
\begin{corollary}\label{defPcor2} %this is {defPcor} copied at this stagel need to incorporate $\hatTot$ so this becomes statement that 2 functors are equivalent.
The functors $\hatTot$ and $\hatTot\, \gr^W$ from the $\infty$-category of 
 quasi-involutive a.c. stacky $P_k$-algebras in double complexes (localised at filtered levelwise  quasi-isomorphisms) to the $\infty$-category of  quasi-involutive a.c. $P_k$-algebras  become naturally equivalent when  restricted to objects $A$ satisfying the conditions:
   \begin{enumerate}
    \item\label{poswgt2} $W_{-1}A^j = 0$ for all $j$, and %%have just specified levelwise. %%note this ensures complete
    \item\label{wgtsunder22}  $\oR\hatDDer_{P_k^{ac}, \cT^+_R, L_{\infty}}(\gr^W A, M) \simeq 0$ for all Beck $W_0A$-modules $M$ of pure $\bG_m$-weight $\ge 2$.
   \end{enumerate}
  \end{corollary}

  The following analogue of Proposition \ref{weightprop} is now an immediate consequence of Lemmas \ref{polyvectorlemma2} and \ref{hatHHomOmegalemma}:
  \begin{proposition}\label{weightprop2} 
 If $B$ is a  non-negatively weighted $\bG_m$-equivariant stacky $P_k^{ac}$-algebra over a CDGA $R$   for which the map  $(\cW_1\oL\Omega^1_{B/\cW_0B})\ten_{\cW_0B}^{\oL}B \to \oL\Omega^1_{B/\cW_0B}$ of commutative cotangent complexes is a quasi-isomorphism, then
\[
 \oR\hatDDer_{P_k^{ac}, \cT^+_R, L_{\infty}}(B, M) \simeq 0
 \]
 for all Beck $\cW_0B$-modules $M$ pure of $\bG_m$-weight $\ge 2$.
 
 Moreover, if $A$ is another such algebra, equipped with a morphism $A \to B$ such that $\cW_0A \to \cW_0B$ is a levelwise quasi-isomorphism, and if the morphisms $\cW_iA \to \cW_iB$ are  $\oR\hatHHom_{\cW_0A}$-homotopy equivalences for all $i \ge 0$, then we also have $\oR\hatDDer_{P_k^{ac}, \cT^+_R, L_{\infty}}(B, M) \simeq 0$. 
\end{proposition}

\subsection{Quantisations on derived Artin stacks} 

We are now in a position to generalise Theorem \ref{fildefhochthm1} to stacky CDGAs, and hence Corollary \ref{DMquantcor} to derived Artin $n$-stacks.

\subsubsection{Quantisation of $0$-shifted Poisson structures} %%to disambiguate from co-isotropic
 
By  \cite[Lemma \ref{DQnonneg-gradedHH}]{DQnonneg}, based on \cite[\S 3]{voronovHtpyGerstenhaber}, $\C\C_R(A)$ is 
equipped with a stacky brace algebra structure.
Since $A$ is commutative, Lemma \ref{involutiveHH} moreover adapts to make  $(\C\C_{R,\oplus}(A),\tau^{\HH})$ (resp. $\cD_{\oplus}^{\poly}(A)$) into a  quasi-involutive stacky a.c. brace algebra.
% a stacky almost commutative brace algebra $(B,W)$ which is quasi-involutive in the sense that the involution acts on
%  $\H_*(\gr^W_i B  )$ as multiplication by $(-1)^i$.

For  $w \in \Levi_{\GT}^P(\Q)$,  Definition \ref{pwinvdef}  gives an equivalence $p_w$  between stacky quasi-involutive a.c. brace algebras and stacky quasi-involutive a.c. $P_2$-algebras,  by considering the respective algebras in the %strict dg category $(DGdg\Mod(R)_{\mathrm{str}}, \ten_R)$ 
dg category
of cochain complexes of $R$-modules in chain complexes. However, the proof of Theorem \ref{fildefhochthm1} does not immediately adapt to this setting, because functoriality for stacky Hochschild complexes and stacky polyvectors is much more subtle, which is why  we have had to involve $\hatHHom$. 

\medskip
Writing $\Omega^1_{A,\boxempty}$ for the cotangent module associated to the relevant theory (commutative, $\C^{\infty}$ or EFC) as in Corollary \ref{affquantcor}, we have:
\begin{theorem}\label{fildefhochthm2} 
Take a cofibrant stacky $R$-CDGA (resp. stacky $\C^{\infty}$-DGA or stacky  $K$-EFC-DGA)   $A$ with 
% \begin{enumerate}
%  \item cotangent complex  $(\Omega^1_{A, \boxempty})^{\#}$ perfect as an $A^{\#}$-module, such that %%could relax that to $\Omega^1_{A, \boxempty}\ten_{A}A^0$ and work harder on proof of {Polcotlemma}
%  \item there exists $s \ge 0$ for which the  chain complexes $(\Omega^1_{A, \boxempty}\ten_{A}A^0)^r $   are acyclic for all $r >s$.
%  
%  \end{enumerate}
\begin{itemize}
 \item[($\ddagger$)] cotangent complex  $(\Omega^1_{A, \boxempty})^{\#}$ perfect as an $A^{\#}$-module.
\end{itemize}

Then the quasi-involutively  filtered DGLA underlying the  complex of polydifferential operators  
\[
 (\hatTot\cD^{\poly}_{\oplus}(A)_{[-1]}, \tau^{\HH})
\]
is filtered quasi-isomorphic to the graded DGLA 
\[
\Pol_R(A,0)_{[-1]}:= \bigoplus_{p \ge 0} \hatHHom_A(\Omega^p_{A,\boxempty},A)_{[p-1]}
\] 
of derived polyvectors on $A$, where  the Lie algebra structure is given by  the Schouten--Nijenhuis bracket. 
% and $\Omega^p_{A,\boxempty}$ is $\Omega^p_{A/R}$ (resp. $\Omega^p_{A,\C^{\infty}}$, resp. $\Omega^p_{A/K,\EFC} $).

This quasi-isomorphism depends only on  a choice of even $1$-associator $w \in \Levi_{\GT}^P$, and is natural with respect to
  homotopy \'etale functoriality induced by \cite[\S \ref{DQnonneg-Artindiagsn}]{DQnonneg}, \cite[\S \ref{poisson-bidescentsn}]{poisson} and its $\C^{\infty}$ and EFC analogues.
  %%Lemma \ref{hatHHomPollemma} also relevant for that
 
When  $A$ is a cofibrant stacky $R$-CDGA satisfying $(\ddagger)$, the same statements hold for the Hochschild complex 
$
\C\C_{R,\oplus}(A)%, \tau^{\HH})
$
in place of $\cD^{\poly}_{\oplus}(A)$.
\end{theorem}
\begin{proof}
We adapt the proof of Theorem \ref{fildefhochthm1}. As explained in \S \ref{stackyHHsn}, the cofibrant hypothesis ensures that
Lemma \ref{involutiveHH} adapts to show that $(D^{\poly}_{\oplus}(A), \tau^{\HH})$
 is a quasi-involutive a.c. stacky brace algebra, with a levelwise graded quasi-isomorphism $\mathrm{HKR} \co \gr^{\tau^{\HH}}\cD^{\poly}_{\oplus}(A) \xra{\sim} \cPol(A,0)$.  Since  $\cD^{\poly}_{\oplus}(A) \to \C\C_{R,\oplus}(A)$ is a filtered levelwise quasi-isomorphism in the stacky $R$-CDGA setting, it suffices to focus on $\cD^{\poly}_{\oplus}(A)$ in all settings.
 
For any even associator $w$, the $\infty$-functor $p_w$ of Definition \ref{pwinvdef}  then gives an involutive a.c. stacky $P_2$-algebra $p_w(\cD^{\poly}_{R, \oplus},\tau^{\HH}) $, with its associated graded algebra having a zigzag of levelwise quasi-isomorphisms of $\bG_m$-equivariant stacky  $P_2^{ac}$-algebras to $\Pol(A,0)$. 

Since $(\Omega^1_{A, \boxempty})^{\#} $ is assumed to be perfect, Lemma \ref{Polcotlemma} adapts verbatim to give a levelwise quasi-isomorphism $\oL\Omega^1_{\cPol(A,0)/A} \simeq \cPol(A,0)\ten^{\oL}_A\cW_1\cPol(A,0)$ of $A$-modules. Thus $\cPol(A,0)$ satisfies the conditions of Proposition \ref{weightprop2}. 

By base change, we know that  $\Omega^1_{A, \boxempty}\ten_AA^0 $ must be a perfect $A^0$-module in double complexes, so all but finitely many of the $A^0$-modules $ (\Omega^1_{A, \boxempty}\ten_AA^0)^r$ must be acyclic.
Thus if we take a diagram $D=(A(0) \to A(1) \to \ldots A(m))$ of homotopy formally \'etale surjections between   $(m+1)$ such stacky CDGAs, the hypotheses of  Lemma \ref{hatHHomPollemma} are satisfied, so 
%%we've used the second hypothesis
the maps $\cW_i\cPol(D,0) \to \cW_i\cPol(A(0),0)$ are 
 $\oR\hatHHom_{A(0)}$-homotopy equivalences. 
 
 Thus $\cPol(D,0)$ also satisfies the conditions of Proposition \ref{weightprop2},
so   $p_w(\cD^{\poly}_{R, \oplus}(D),\tau^{\HH}) $ satisfies the conditions   of Corollary \ref{defPcor2}, giving a  zigzag of  filtered involutive quasi-isomorphisms  
\[
 \alpha_{w,D}\co p_w \bigcup_i \hatTot(\tau^{\HH}_i \cD^{\poly}_{\oplus}(D)) \simeq \bigoplus_i \hatTot\cW_i\cPol(D,0), %\gr^{\tau^{\HH}}\cD^{\poly}_{\oplus}(A),
 \]
natural with respect to all morphisms $(\cD^{\poly}_{\oplus}(D), \tau^{\HH}) \to (\cD^{\poly}_{\oplus}(D'), \tau^{\HH})$ in the $\infty$-category of quasi-involutive a.c. stacky brace algebras, for all such diagrams $D,D'$. 

If we write $\C$ for the category 
of stacky $R$-CDGAs (resp. stacky $\C^{\infty}$-DGAs or stacky  $K$-EFC-DGAs), with $\C_{c, \onto}$ the subcategory of cofibrant objects and surjective morphisms, then as in  \S \ref{hatHHomsn} we have functors $D \mapsto (\cD^{\poly}_{\oplus}(D), \tau^{\HH})$ and $D \mapsto \cPol(D,0)$ on the Grothendieck construction $\int B \C_{c, \onto}$  of the nerve. If we further restrict to   the full subcategory $\C_{c, \onto}^{lfp} \subset \C_{c, \onto}$ on objects satisfying ($\ddagger$), then the natural transformation $\alpha_w$ above gives a natural equivalence between the respective functors from  $\int B\C_{c, \onto}^{lfp}$ to the category of quasi-involutive a.c. $P_2$-algebras.  

 Consider the $\infty$-category $\oL \C^{lfp,\et}$ given by the localisation at levelwise quasi-isomorphisms of the category of homotopy formally \'etale morphisms between stacky $R$-CDGAs (resp. stacky $\C^{\infty}$-DGAs or stacky  $K$-EFC-DGAs) satisfying ($\ddagger$).
By Lemma \ref{intBClemma}, $\oL \C^{lfp,\et}$
arises as a simplicial localisation of $\int B(\C_{c, \onto}^{lfp,\et})$, for the subcategory $\C_{c, \onto}^{lfp, \et} \subset \C_{c, \onto}^{lfp}$ of homotopy formally \'etale morphisms. 
Since % \cite[\S \ref{DQnonneg-Artindiagsn}]{DQnonneg}, \cite[\S \ref{poisson-bidescentsn}]{poisson}, 
the maps $\hatTot\cW_i\cPol(D,0) \to \hatTot\cW_i\cPol(A(0),0)$ above are quasi-isomorphisms, 
our totalised functors descend to that localisation, so we also have an equivalence 
\[
\alpha_w \co \bigcup_i \hatTot (\tau^{\HH}_i \cD^{\poly}_{\oplus}(-)) \simeq \bigoplus_i\hatTot\cW_i\cPol(-,0)
\]
of $\infty$-functors from $\oL \C^{lfp,\et}$ to the $\infty$-category of quasi-involutive a.c. $P_2$-algebras.
\end{proof}

\begin{definition}\label{qpoldef2} 
 Given a cofibrant stacky $R$-CDGA, stacky $\C^{\infty}$-DGA or stacky $K$-EFC-DGA $A$, adapting \cite[Definition \ref{DQnonneg-qpoldef}]{DQnonneg}  as  in \cite{DQDG,DStein}, we define the filtered DGLA $ (Q\widehat{\Pol}(A,0)_{[-1]},\tilde{F})$ of quantised polyvectors by setting
 \[
 \tilde{F}^i Q\widehat{\Pol}(A,0):= \prod_{p \ge i} \hatTot \tau^{\HH}_p \cD^{\poly}_{\oplus}(A)\hbar^{p-1};
 \]
 observe the Gerstenhaber bracket satisfies $[\tau^{\HH}_p,\tau^{\HH}_q] \subset \tau^{\HH}_{p+q-1}$, so $[\tilde{F}^i,\tilde{F}^j] \subset \tilde{F}^{i+j-1}$, making $\tilde{F}^2Q\widehat{\Pol}(A,0)_{[-1]}$  into a pro-nilpotent filtered DGLA.

 The space $Q\cP(A,0)$ of  $0$-shifted quantisations of $A$ is then defined (adapting \cite[Definition \ref{DQnonneg-Qpoissdef}]{DQnonneg}) to be 
\[
\Lim _i \mmc(\tilde{F}^2 Q\widehat{\Pol}(A,0)_{[-1]}/\tilde{F}^i).
\]
% for $\mmc$ as in Definition \ref{mcPLdef}. 
The subspace $Q\cP(A,0)^{sd} \subset Q\cP(A,0)$ of self-dual quantisations then consists of fixed points for the involution $(-)^* $ given by $\Delta^*(\hbar):= i(\Delta)(-\hbar)$, for the involution  $i$ of Lemma \ref{involutiveHH}. 

These definitions all extend to diagrams $(A(0) \to \ldots \to A(m))$ in place of $A$, giving  homotopy \'etale functoriality as in the proof of Theorem \ref{fildefhochthm2}. 
 \end{definition}

For a strongly quasi-compact derived Artin $n$-stack $\fX$, the space $Q\cP(\fX,0)$  and its variants are defined in \cite[\S \ref{DQnonneg-Artindiagsn}]{DQnonneg} by first taking an Artin $(n+2)$-hypergroupoid resolution $X_{\bt}$ of $\fX$, then applying the left adjoint $D^*$ of the  denormalisation functor $D$, forming
a cosimplicial stacky CDGA $j \mapsto D^* O(X^{\Delta^j})$ with homotopy formally \'etale structure morphisms. This can be thought of as giving a formally \'etale simplicial resolution of $\fX$ by derived Lie algebroids. We then  set
\[
 \cP(\fX,0) := \ho\Lim_{j \in \Delta} \cP(D^* O(X^{\Delta^j}),0), \quad Q\cP(\fX,0) := \ho\Lim_{j \in \Delta} Q\cP(D^* O(X^{\Delta^j}),0);
\]
this is shown to be  well-defined because the constructions are invariant under smooth hypercovers.
Similar constructions work verbatim for EFC and $\C^{\infty}$ analogues. 

These constructions extend beyond the strongly quasi-compact setting, either by allowing the hypercover to  involve disjoint unions of derived affine schemes, or as in \cite{smallet2} by working directly with a functor on  $D_*\fX$ on stacky CDGAs which admits formally \'etale affine hypercovers. Indeed, analogously to \cite[\S \ref{NCpoisson-derNCprestacksn}]{NCpoisson}, this approach allows the definitions to extend to any homogeneous (a.k.a. infinitesimally cohesive on one factor) derived stack with a bounded below cotangent complex.

 In the following lemma note that, as with any Fermat theory, modules over dg $\C^{\infty}$-rings and dg EFC-rings are just modules over the underlying CDGAs.
\begin{lemma}\label{hgpdstackyCDGAlemma}
 If $Y$ is a derived Artin $m$-hypergroupoid, then the stacky CDGA $A:= D^*O(Y)$ satisfies  $(\Omega^1_{A, \boxempty}\ten_{A}A^0)^r \simeq 0$ for all $r>m$. If the associated derived Artin $m$-stack $\fY:=Y^{\sharp}$ has perfect cotangent complex, then $(\Omega^1_A)^{\#}$ is perfect as an $A^{\#}$-module.
 \end{lemma}
\begin{proof}
First, note that %for any double complex $M$ of $O(Y_0)=A^0$-modules, we have
% \[
%  \Hom_{O(Y_0)}(\Omega^1_{D^*O(Y), \boxempty})\ten_{O(Y_0)}O(Y_0),M) \cong \Hom_{O(Y_0)}(\Omega^1_{O(Y)})\ten_{O(Y_0)}O(Y_0),DM) \cong  \Hom_{O(Y_0)}(N_c(\Omega^1_{O(Y)})\ten_{O(Y_0)}O(Y_0)),M)
% \]
% by adjunction, 
the double complex $\Omega^1_{A, \boxempty}\ten_{A}A^0$ is just the Dold--Kan normalisation $N_c$ of the cosimplicial chain complex $\Omega^1_{Y, \boxempty}\ten_{O(Y)}A^0$, since both constructions have the same right adjoint. The term $N_c^r$ in cochain degree $r$ is thus isomorphic to the cokernel $\Omega^1_{Y_r/M_{\L^{r,0}}Y}\ten_{O(Y_r)}O(Y_0)$ of the $(r,0)$th partial matching map, which by hypothesis is a trivial cofibration in degrees $r>m$, so  $(\Omega^1_{A, \boxempty}\ten_{A}A^0)^r \simeq 0$.

The map $\cHom_A(\Omega^1_A,N)^0 \to \cHom_A(\Omega^1_A,\sigma^{\le m}N)^0$ to the brutal cotruncation is thus a quasi-isomorphism for all  $A^0$-modules $N$ in double complexes. That quasi-isomorphism extends to all  $A$-modules $N$ in double complexes which are concentrated in non-negative cochain degrees, since $N\cong \Lim_r \sigma^{\ge r}N$ with the quotients being $A^0$-modules.

All the partial matching maps are smooth, so the argument of the first paragraph also shows that the chain complexes $ (\Omega^1_{A, \boxempty}\ten_{A}A^0)^r$ are projective $A^0$-modules in chain complexes, and in particular perfect complexes,  for all $r>0$. Since $Y_0 \to \fY$ is an Artin $m$-atlas and $\fY$ has perfect cotangent complex, so does $Y_0$, meaning that the $A^0$-module  $\Omega^1_{A^0, \boxempty} =(\Omega^1_{A, \boxempty}\ten_{A}A^0)^0$ is also a perfect complex. Since the vanishing result above  implies that  $(\Omega^1_A\ten_AA^0)^{\#}\simeq \bigoplus_{r=0}^m ((\Omega^1_{A, \boxempty}\ten_{A}A^0)^r)^{[-r]}$, we deduce that it is perfect as an $A^0$-module.

The functor $\cHom_A(\Omega^1_A,-)^0$ thus commutes with filtered homotopy colimits of $A^0$-modules in double complexes, and hence  (via quotients of the brutal truncation filtration) with filtered colimits of $A$-modules concentrated in cochain degrees $[0,m]$.  Since cotruncation commutes with filtered colimits,  it follows from the cotruncation property above that $\cHom_A(\Omega^1_A,-)^0$ commutes with all filtered homotopy colimits of $A$-modules, which is equivalent to saying that $(\Omega^1_A)^{\#}$ is perfect as an $A^{\#}$-module.
\end{proof}

% The $(n+2)$-hypergroupoid condition ensures that the stacky CDGAs (resp. stacky EFC-DGAs, resp. stacky $\C^{\infty}$-DGAs) $A:=D^* O(X^{\Delta^j})$ all satisfy\footnote{In fact, models for $X$ can be chosen for which the horn-filling maps above level $n$ are homotopy \'etale, in which case this condition holds for $r>n$.} $(\Omega^1_{A, \boxempty}\ten_{A}A^0)^r \simeq 0$ for all $r>n+2$. 
% When $\fX$ has perfect cotangent complex, the stacky CDGAs  $D^* O(X^{\Delta^j})$ also have perfect cotangent complexes.

% Following pasted from DMquantcor. I think we should: define $\cP,Q\cP$ using \'etale site, remark equivce with {DQnonneg} and remove sqc hypothesis and subsequent remark.

% $Q\cP(\fX,0)= \oR\Gamma(\fX_{\et}, Q\cP(\sO)) $   of $0$-shifted quantisations of $\fX$ from \cite[Definitions \ref{DQnonneg-Qpoissdef}, \ref{DQnonneg-QpoissdefX}]{DQnonneg} and its subspace $Q\cP(\fX,0)^{sd}$ of self-dual (or involutive) quantisations from  \cite[Definition 1.33]{DQnonneg} are respectively equivalent to the Maurer--Cartan spaces
% \begin{align*}
%   &\oR\Gamma(\fX_{\et},\mmc(  F^2\widehat{\Pol}(\sO,0)_{[-1]} \by\hbar F^1\widehat{\Pol}(\sO,0)_{[-1]} \by \hbar^2\widehat{\Pol}(\sO,0)_{[-1]}\brh)),\\
%    &\oR\Gamma(\fX_{\et},\mmc(  F^2\widehat{\Pol}(\sO,0)_{[-1]} \by \hbar^2\widehat{\Pol}(\sO,0)_{[-1]}\brhh)).
%    \end{align*}

\begin{corollary}\label{Artinquantcor} 
Given a  
derived Artin $n$-stack $\fX$ over $R$  with  perfect cotangent complex, %$\oL\Omega^1_{\fX/R}$, 
%and an $R$-linear Poisson structure $\pi$ on $A$, 
any even associator $w \in \Levi_{\GT}^P$ gives rise to a map
\[
 \cP(\fX,0) \to Q\cP^{sd}(\fX,0)
\]
from the space of $0$-shifted Poisson structures on $\fX$ to 
the space of self-dual  $E_1$ quantisations of $\fX$ in the sense of  \cite[Definitions \ref{DQnonneg-Qpoissdef}, \ref{DQnonneg-QpoissdefX}]{DQnonneg}.
When $\fX$ is strongly quasi-compact,
these quantisations give rise to curved $A_{\infty}$ deformations $(\per_{dg}(\fX)\llbracket \hbar \rrbracket, \{m^{(i)}\}_{i\ge 0})$ of the dg category $\per_{dg}(\fX) $ of perfect $\O_{\fX}$-complexes,  $\hbar$-semilinearly anti-involutive with respect to the dg endofunctor $\hom_{\sO_{\fX}}(-,\sO_{\fX})$ on $\per_{dg}(\fX) $.

The analogous statements for derived $\C^{\infty}$ and derived analytic Artin $n$-stacks (in the sense of \cite{DStein}) with perfect cotangent complexes also hold.
\end{corollary}
\begin{proof} 
The first statement and its $\C^{\infty}$ and analytic analogues follow immediately by substituting Theorem \ref{fildefhochthm2} in the definitions above, via Lemma \ref{hgpdstackyCDGAlemma}, and passing to homotopy limits.

By \cite[Proposition \ref{DQnonneg-Perprop2}]{DQnonneg},  $E_1$ quantisations of $\fX$ give rise to  curved $A_{\infty}$ deformations of $\per_{dg}(\fX)$, as a consequence of the corresponding statement \cite[Proposition \ref{DQnonneg-Perprop2}]{DQnonneg} for stacky CDGAs. The latter follows by establishing that the natural  restriction map $\CCC_R(\cPer(A)) \to \CCC_R(A)$ is a filtered quasi-isomorphism, for a bi-dg category $\cPer(A)$ of perfect modules associated to $A$  \cite[Definition \ref{DQnonneg-Perdef}]{DQnonneg}. The identity $A \cong A^{\op}$ extends to the dualisation functor $(-)^{\vee}$  on $\cPer(A)$, which is a homotopy involution in the sense that it extends to an action of a simplicial resolution of the group $C_2$. The restriction map above is then equivariant for this homotopy $C_2$-action, so induces equivalences on homotopy fixed points of the respective spaces of deformations.

Explicitly, for strongly self-dual $A$-modules $P_j$, the self-dual condition on a curved  $A_{\infty}$ deformation is that 
% \[
% m^{(i)}_{P_0,P_1, \ldots, P_i}(-\hbar)= \pm (m^{(i)}_{P_i^{\vee}, \ldots, P_0^{\vee}})^t \co \cHom(P_0,P_1)\ten \ldots \ten \cHom(P_{i-1},P_i) \to \cHom(P_0, P_i)\brh. 
% \]
the following diagram commutes for each of the $A_{\infty}$ operations $m^{(i)}(\hbar)$:
\[
 \begin{CD}
  \cHom(P_0,P_1)\ten \ldots \ten \cHom(P_{i-1},P_i) @>{m^{(i)}_{P_0, \ldots, P_i}(-\hbar)}>>\cHom(P_0, P_i)\brh \\
  @V{\cong}VV @VV{\cong}V \\
  \cHom(P_i^{\vee},P_{i-1}^{\vee})\ten \ldots \ten \cHom(P_{1}^{\vee},P_0^{\vee}) @>{\pm m^{(i)}_{P_i^{\vee}, \ldots, P_0^{\vee}}}>> \cHom(P_i^{\vee}, P_0^{\vee})\brh. \qedhere
  \end{CD} 
\]
\end{proof}

\begin{remark}
The hypotheses of %the following corollary 
Corollary \ref{Artinquantcor} 
are satisfied by any derived Artin stack locally of finite presentation over the CDGA $R$. When $R=\H_0R$, this includes those underived Artin $n$-stacks $\fX$ which admit Artin $n$-atlases by 
 schemes which are local complete intersections over $R$, in which case the cotangent complex $\oL\Omega^1_{\fX/R}$ is concentrated  in homological degrees $[-n,1]$.  
\end{remark}

%%also remark here for strict (uncurved) quantisations, exactly as in DM case. No, commented out as fairly meaningless
% \begin{remark}
%  REasoning as in Remark \ref{strictquantnrmk},  curvature vanishes and the deformation quantisations in Corollary \ref{Artinquantcor} all come from associative (in fact $BD_1$) deformations of the structure sheaf $\sO_X$ on the small \'etale site of $\fX$ as in \cite{smallet2}. On the lisse-\'etale site, this corresponds to a deformaiton of the hypersheaf $U \mapsto \Tot^{\Pi}\Omega^{\bt}_{U/\fX}$ %%that reasoning's still valid because we'll take global sections over each. %%commented out as fairly meaningless
% \end{remark}

 \subsubsection{Quantisation of $1$-shifted co-isotropic structures}\label{coisosn2}
 %% have  added this July 2024

In \cite[Definition 5.14]{MelaniSafronovII}, shifted co-isotropic structures and their quantisations  are defined for stacky CDGAs, and hence derived Artin stacks, in terms of $P_{[n+1,n]}$-algebras and  $BD_{[n+1,n]}$-algebras in the unweighted Tate dg category $\cT^0_{R,dg} \subset \cT^+_{R,dg}$, the full dg subcategory of objects concentrated in weight $0$. Unwinding our definition of a $0$-shifted quantisation  gives a curved strong homotopy $BD_1$-algebra in $\cT^0_{R,dg}$. 

Since  
%Since we cannot require strict quantisations of $0$-shifted Poisson structures,
we have to allow for curvature, we need a more general definition than \cite{MelaniSafronovII}, similar to Definition \ref{qcoisodef}. We then encounter the subtlety that  $\cT^0_{R,dg}$ does not have infinite limits, so we work with the pro-category $\pro(\cT^0_{R,dg})$ in order to accommodate the algebra of quantised polyvectors. % and regard the $BD_2$ operad as a pro-object in the obvious way.
The monoidal functor $\cT^+_{R,dg} \to \pro(\cT^0_{R,dg})$ sending $V$ to the pro-object $\{\prod_{0 \le i \le j} \cW_jV\}_j$,  allows us to make the following definition.

%%%My description of quantisation $Q\widehat{\Pol}$ makes it sth like curved s.h. $BD_1$-algebra ($\approx$ a.c. $A_{\infty}$) in sth like Tate dg category, so that would be the place to work. Use that to determine defn of co-isotropic as semi-curved $BD_{[2,1]}$-algebra there. Note that $\C\C$ and $\cD^{\poly}_{\oplus}$ are direct sum  total complexes of thing and $\uline{D}^{\poly,\bt}(A)$, but $\mod \hbar$ we're getting a product, and before that we apply $\hatTot$.  

% As in \cite[Definition \ref{DQnonneg-HHdef}]{DQnonneg}, we have
% \[
%  \uline{\CCC}^n_R(A,M)= \hat{\HHom}_R(A^{\ten_R n}, M),
% \]
% so taking curved $A_{\infty}$-algebra in Tate category. Thus below OK, and polydiff would work the same. 

% % % i don't believe we've got the setup right for the next. try expressing qpolndirectly in terms of operad
 
\begin{definition}\label{qcoisodef2} %% Unlike \cite{MelaniSafronovI}, note that \cite{MelaniSafronovII} does work with Tate polyvectors systematically. Also note that twisting by $k(\infty)$ is their other way of accommodating Tate realisation.
 
 Define a $1$-shifted  quantised co-isotropic structure on a morphism $f \co A  \to B$ of stacky CDGAs to consist of:
 \begin{enumerate}
\item  a quantised $0$-shifted Poisson structure $\Delta$ on $B$, 
\item a quantised $1$-shifted Poisson structure on $A$ in the form of
a system  of flat strong homotopy  $BD_2/\hbar^k$-algebras $\tilde{A}/\hbar^k$ in $\cT^0_{R[\hbar]/\hbar^k,dg}$, equipped with a weak equivalence $\tilde{A}/\hbar \to A$  of  commutative algebras in $\cT^0_{R,dg}$, and
\item a strong homotopy morphism  of 
 $BD_2$-algebras in  $\pro(\cT^0_{R,dg})$:
 \[
F \co   \tilde{A} \to T_{\Delta}Q\widehat{\Pol}(B,0):= \{(\prod_{0 \le i \le j} \tau^{\HH}_jD^{\poly}_{\oplus}(B  )\hbar^j, \delta +\{\Delta,-\}, \smile +\{\Delta\}\{\}_2, \{\Delta\}\{\}_{i \ge 3} )\}_j.
 \]
 \end{enumerate}
 
 Define a $1$-shifted  quantised co-isotropic structure on a morphism $f \co \fX \to \fY$ of  derived Artin $n$-stacks   to consist of compatible $1$-shifted  quantised co-isotropic structures on all morphisms $f \co A \to B$ of stacky CDGAs with $A$ and $B$ homotopy formally \'etale over $\fY$ and $\fX$ respectively, using the homotopy formally \'etale functoriality of \S  \ref{hatHHomsn}.
 
 We say that this quantisation is self-dual if $\Delta$ is self-dual, the $BD_2$-algebras  $\tilde{A}$ are given compatible quasi-involutive structures, and $F$ intertwines the involutions.
  \end{definition}
%%GTref:
% Related: one should elaborate a bit more on Remark 3.39 (p. 40) as well, in particular on the functoriality of the quantisation that is being used.

Corollary \ref{defPprop2cor}, and hence the proof of Theorem \ref{defPcor2thm}, gives equivalences in $L^{\cW}\cU_{P_k^{ac}[\hbar^2], R}$, hence \emph{a fortiori} as $P_k^{ac}[\hbar^2]$-algebras in $\cT^+_{R,dg}$. Substituting in these results, the proof of Corollary \ref{coisocor1} adapts immediately to give:

\begin{corollary}\label{coisocor2}
Given a morphism $f \co \fX \to \fY$ of  derived Artin $n$-stacks    such that $\fX$ has perfect cotangent complex,
 every $1$-shifted co-isotropic structure on $f$   admits  self-dual quantisations in the sense of Definition \ref{qcoisodef2}.
 \end{corollary}

For analytic and $\C^{\infty}$ analogues,  the considerations of Remark \ref{EFCcoisormk} apply equally to derived Artin stacks, so the proof of Corollary \ref{coisocor2} immediately gives abstract $BD_2$-algebra quantisations in those settings, which we can strengthen to  genuine quantisations  of the analytic or $\C^{\infty}$  co-isotropic structures by replacing $\cT^0_{dg}$ with the multicategory of polydifferential operators. 
 
%%%copied out again with a view to adapting: 
%  \begin{remark}[Quantisations of $1$-shifted co-isotropic structures]\label{coisormk2} 
% Generalising Remark \ref{coisormk1} to derived Artin stacks, we can use the proof of Theorem \ref{fildefhochthm2} to deduce that every $1$-shifted co-isotropic structure on a morphism $f \co \fX \to \fY$ of derived Artin $n$-stacks admits a quantisation, provided $\fX$ has perfect cotangent complex. The reasoning is exactly the same as in Remark \ref{coisormk1}, with the quantisation functorially giving, for all morphisms $A \to B$ of  stacky CDGAs with $A$ and $B$  homotopy formally \'etale over $\fY$ and $\fX$ respectively, a $0$-shifted quantisation $\Delta$ of $B$, a $1$-shifted quantisation $\tilde{A}$ of $A$ and a 
% morphism %$\hatTot \tilde{A} \to (\prod_{j \ge 0} \hatTot\tau^{\HH}_j\C\C_{R, BD_1}(B)\hbar^j, \delta +\{\Delta,-\})$ 
% $\tilde{A} \to (\prod_{j \ge 0} \tau^{\HH}_j\C\C_{R}(B)\hbar^j, \delta +\{\Delta,-\})$
% of $BD_2$-algebras in an unweighted Tate category over $R\brh$.
%  \end{remark}
%  

\bibliographystyle{alphanum}
\bibliography{references.bib}
\end{document}